\newtheorem{theorem}{Theorem}
\newtheorem{proposition}[theorem]{Proposition}
\newtheorem{conjecture}[theorem]{Conjecture}
\newtheorem{definition}[theorem]{Definition}
\newtheorem{problem}[theorem]{Problem}
\newtheorem{algorithm}[theorem]{Algorithm}
\newtheorem{procedure}[theorem]{Procedure}
\newtheorem{remark}[theorem]{Remark}
\numberwithin{equation}{section}
\numberwithin{theorem}{section}
\newcommand{\abs}[1]{|#1|}
\newcommand{\set}[1]{\left\{#1\right\}}
\newcommand{\arrowsv}[0]{\overset{v}{\rightarrow}}
\newcommand{\mH}[0]{\mathcal{H}}
\newcommand{\uni}[2]{{#1}\big\vert_{#2}}
\newcommand{\wH}[3]{\widetilde{\mH}(\uni{#1}{#2}; {#3})}
\newcommand{\wFv}[3]{\widetilde{F}_v(\uni{#1}{#2}; {#3})}
\newcommand{\rp}[0]{{r'_0}}
\newcommand{\rpp}[0]{{r''_0}}
\DeclareMathOperator{\G}{G}
\DeclareMathOperator{\V}{V}
\DeclareMathOperator{\E}{E}
\begin{document}

% -----------------------------------------------------------

\title{On the vertex Folkman numbers\\ $F_v(a_1, ..., a_s; m - 1)$\\ when $\max\{a_1, ..., a_s\} = 6$ or $7$}

\author{
	Aleksandar Bikov\thanks{Corresponding author} \hfill Nedyalko Nenov\\
	\let\thefootnote\relax\footnote{Email addresses: \texttt{asbikov@fmi.uni-sofia.bg}, \texttt{nenov@fmi.uni-sofia.bg}}\\
	\smallskip\\
	Faculty of Mathematics and Informatics\\
	Sofia University "St. Kliment Ohridski"\\
	5, James Bourchier Blvd.\\
	1164 Sofia, Bulgaria
}

\maketitle

\begin{abstract}
Let $G$ be a graph and $a_1, ..., a_s$ be positive integers. The expression $G \overset{v}{\rightarrow} (a_1, ..., a_s)$ means that for every coloring of the vertices of $G$ in $s$ colors there exists $i \in \{1, ..., s\}$ such that there is a monochromatic $a_i$-clique of color $i$. The vertex Folkman numbers $F_v(a_1, ..., a_s; q)$ are defined by the equality:
$$
F_v(a_1, ..., a_s; q) = \min\{|V(G)| : G \overset{v}{\rightarrow} (a_1, ..., a_s) \mbox{ and } K_q \not\subseteq G\}.
$$
Let $m = \sum\limits_{i=1}^s (a_i - 1) + 1$. It is easy to see that $F_v(a_1, ..., a_s; q) = m$ if $q \geq m + 1$. In [11] it is proved that $F_v(a_1, ..., a_s; m) = m + \max\{a_1, ..., a_s\}$. We know all the numbers $F_v(a_1, ..., a_s; m - 1)$ when $\max\{a_1, ..., a_s\} \leq 5$ and none of these numbers is known if $\max\{a_1, ..., a_s\} \geq 6$. In this paper we compute all the numbers $F_v(a_1, ..., a_s; m - 1)$ when $\max\{a_1, ..., a_s\} = 6$. We also prove that $F_v(2, 2, 7; 8) = 20$ and obtain new bounds on the numbers $F_v(a_1, ..., a_s; m - 1)$ when $\max\{a_1, ..., a_s\} = 7$.

\bigskip\emph{Keywords: } Folkman number, clique number, independence number, chromatic number
\end{abstract}

% -----------------------------------------------------------

%\linenumbers

\section{Introduction}

Only finite, non-oriented graphs without loops and multiple edges are considered in this paper. $G_1 + G_2$ denotes the graph $G$ for which $\V(G) = \V(G_1) \cup \V(G_2)$ and $\E(G) = \E(G_1) \cup \E(G_2) \cup E'$, where $E' = \set{[x, y] : x \in \V(G_1), y \in \V(G_2)}$, i.e. $G$ is obtained by connecting every vertex of $G_1$ to every vertex of $G_2$. All undefined terms can be found in \cite{W01}. 

Let $a_1, ..., a_s$ be positive integers. The expression $G \arrowsv (a_1, ..., a_s)$ means that for any coloring of $\V(G)$ in $s$ colors ($s$-coloring) there exists $i \in \set{1, ..., s}$ such that there is a monochromatic $a_i$-clique of color $i$. In particular, $G \arrowsv (a_1)$ means that $\omega(G) \geq a_1$. Further, for convenience, instead of $G \arrowsv (\underbrace{2, ..., 2}_r)$ we write $G \arrowsv (2_r)$ and instead of $G \arrowsv (\underbrace{2, ..., 2}_r, a_1, ..., a_s)$ we write $G \arrowsv (2_r, a_1, ..., a_s)$.

Define:

$\mH(a_1, ..., a_s; q) = \set{ G : G \arrowsv (a_1, ..., a_s) \mbox{ and } \omega(G) < q }.$

$\mH(a_1, ..., a_s; q; n) = \set{ G : G \in \mH(a_1, ..., a_s; q) \mbox{ and } \abs{\V(G)} = n }.$

The vertex Folkman number $F_v(a_1, ..., a_s; q)$ is defined by the equality:
\begin{equation*}
F_v(a_1, ..., a_s; q) = \min\set{\abs{\V(G)} : G \in \mH(a_1, ..., a_s; q)}.
\end{equation*}

The graph $G$ is called an extremal graph in $\mH(a_1, ..., a_s; q)$ if $G \in \mH(a_1, ..., a_s;q )$ and $\abs{\V(G)} = F_v(a_1, ..., a_s; q)$. We denote by $\mH_{extr}(a_1, ..., a_s; q)$ the set of all extremal graphs in $\mH(a_1, ..., a_s; q)$.

We say that $G$ is a maximal graph in $\mH(a_1, ..., a_s; q)$ if $G \in \mH(a_1, ..., a_s; q)$ but $G + e \not\in \mH(a_1, ..., a_s; q), \forall e \in \E(\overline{G})$, i.e. $\omega(G + e) = q, \forall e \in \E(\overline{G})$. $G$ is a minimal graph in $\mH(a_1, ..., a_s; q)$ if $G \in \mH(a_1, ..., a_s; q)$ but $G - e \not\in \mH(a_1, ..., a_s; q), \forall e \in \E(G)$, i.e. $G - e \not\arrowsv (a_1, ..., a_s), \forall e \in \E(G)$.

For convenience, we also define the following term:
\begin{definition}
\label{definition: (+K_t)}
The graph $G$ is called a $(+K_t)$-graph if $G + e$ contains a new $t$-clique for all $e \in \E(\overline{G})$.
\end{definition}
Obviously, $G \in \mH(a_1, ..., a_s; q)$ is a maximal graph in $\mH(a_1, ..., a_s; q)$ if and only if $G$ is a $(+K_q)$-graph. We shall denote by $\mH_{+K_t}(a_1, ..., a_s; q)$ the set of all $(+K_t)$-graphs in $\mH(a_1, ..., a_s; q)$, and by $\mH_{max}(a_1, ..., a_s; q)$ all maximal $K_q$-free graphs in this set.

\begin{remark}
\label{remark: mathcal(H)(a_1; q; n) = ...}
In the special case $s = 1$ we have

$\mH(a_1; q; n) = \set{ G : a_1 \leq \omega(G) < q \mbox{ and } \abs{\V(G)} = n }$.

If $a_1 \leq n \leq q - 1$ then $K_n \in \mH_{max}(a_1; q; n)$, and if $n \geq q - 1 \geq a_1$, then $\mH_{max}(a_1; q; n) = \mH_{max}(q - 1; q; n).$
\end{remark}

Folkman proves in \cite{Fol70} that:
\begin{equation}
\label{equation: F_v(a_1, ..., a_s; q) exists}
F_v(a_1, ..., a_s; q) \mbox{ exists } \Leftrightarrow q > \max\set{a_1, ..., a_s}.
\end{equation}
Other proofs of (\ref{equation: F_v(a_1, ..., a_s; q) exists}) are given in \cite{DR08} and \cite{LRU01}. In the special case $s = 2$, a very simple proof of this result is given in \cite{Nen85} with the help of corona product of graphs.\\
Obviously $F_v(a_1, ..., a_s; q)$ is a symmetric function of $a_1, ..., a_s$, and if $a_i = 1$, then
\begin{equation*}
F_v(a_1, ..., a_s; q) = F_v(a_1, ..., a_{i-1}, a_{i+1}, ..., a_s; q).
\end{equation*}
Therefore, it is enough to consider only such Folkman numbers $F_v(a_1, ..., a_s; q)$ for which
\begin{equation}
\label{equation: 2 leq a_1 leq ... leq a_s}
2 \leq a_1 \leq ... \leq a_s.
\end{equation}
We call the numbers $F_v(a_1, ..., a_s; q)$ for which the inequalities (\ref{equation: 2 leq a_1 leq ... leq a_s}) hold canonical vertex Folkman numbers.\\
In \cite{LU96} for arbitrary positive integers $a_1, ..., a_s$ the following terms are defined
\begin{equation}
\label{equation: m and p}
m(a_1, ..., a_s) = m = \sum\limits_{i=1}^s (a_i - 1) + 1 \quad \mbox{ and } \quad p = \max\set{a_1, ..., a_s}.
\end{equation}
It is easy to see that $K_m \arrowsv (a_1, ..., a_s)$ and $K_{m - 1} \not\arrowsv (a_1, ..., a_s)$. Therefore
\begin{equation*}
F_v(a_1, ..., a_s; q) = m, \quad q \geq m + 1.
\end{equation*}
The following theorem for the numbers $F_v(a_1, ..., a_s; m)$ is true:
\begin{theorem}
\label{theorem: F_v(a_1, ..., a_s; m) = m + p}
Let $a_1, ..., a_s$ be positive integers and let $m$ and $p$ be defined by the equalities (\ref{equation: m and p}). If $m \geq p + 1$, then:
\begin{flalign*}
F_v(a_1, ..., a_s; m) = m + p, \ \mbox{\cite{LU96},\cite{LRU01}}. && \tag{a} 
\end{flalign*}
\begin{flalign*}
K_{m+p} - C_{2p + 1} = K_{m - p - 1} + \overline{C}_{2p + 1} && \tag{b}
\end{flalign*}
is the only extremal graph in $\mH(a_1, ..., a_s; m)$, \ \cite{LRU01}.
\end{theorem}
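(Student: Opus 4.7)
The plan is to match $G_0 := K_{m-p-1} + \overline{C}_{2p+1}$ against a lower bound of $m+p$, and then to characterize the extremal graph.

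For the upper bound in (a), I would verify directly that $G_0 \in \mH(a_1,\ldots,a_s;m)$. A quick count gives $\abs{\V(G_0)} = (m-p-1)+(2p+1) = m+p$ and $\omega(G_0) = (m-p-1) + \alpha(C_{2p+1}) = m-1$, so $K_m\not\subseteq G_0$. For $G_0\arrowsv(a_1,\ldots,a_s)$, fix any $s$-coloring with classes $V_1,\ldots,V_s$, write $c_i := \abs{V_i\cap\V(K_{m-p-1})}$, and let $\alpha_i$ be the size of the largest $C_{2p+1}$-independent set inside $V_i$; by the join structure, the maximum monochromatic $i$-clique in $G_0$ has exactly $c_i+\alpha_i$ vertices. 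Assuming $c_i+\alpha_i \leq a_i-1$ for all $i$ and summing yields $\sum_i\alpha_i \leq p$. If a single color $j$ covers all of $\V(\overline{C}_{2p+1})$, then $\alpha_j=p$ forces $c_j \leq a_j-1-p < 0$, which is absurd. Otherwise each color's vertices on the cycle split into arcs (induced paths) whose lengths sum to $2p+1$, giving
\[
\sum_i\alpha_i \;=\; \sum_{\text{arcs}}\lceil\ell/2\rceil \;\geq\; \lceil(2p+1)/2\rceil \;=\; p+1,
\]
also contradictory.

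For the matching lower bound, I would proceed by induction on $p \geq 2$. The base case $p=2$ forces every $a_i$ to be $2$ and $m=s+1$, and $G\arrowsv(2_{m-1})$ is equivalent to $\chi(G)\geq m$; the bound then follows from the classical fact that any $m$-chromatic graph on at most $m+1$ vertices contains $K_m$. For the inductive step ($p\geq 3$, $a_s=p$), the strategy is to locate in any $G\in\mH(a_1,\ldots,a_s;m)$ with $\abs{\V(G)}<m+p$ a clique $S \cong K_{p-1}$ whose common external neighborhood $N$ induces a graph belonging to $\mH(a_1,\ldots,a_{s-1},a_s-1;m-1)$ on fewer than $(m-1)+p$ vertices, contradicting the inductive hypothesis applied with strictly smaller $p$. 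The main obstacle is justifying this neighborhood extraction: one must verify that any good $s$-coloring of $G[N]$ for the shifted parameter list extends, by assigning color $s$ to $S$ and coloring the remaining vertices trivially, to a good $s$-coloring of $G$ for the original list, and one must ensure a suitable $K_{p-1}$ with a large common neighborhood actually exists; the hypothesis $m\geq p+1$ enters at precisely this stage.

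For part (b), the uniqueness, I would trace the equality cases of the above induction. An extremal $G$ has $\abs{\V(G)}=m+p$ and $\omega(G)=m-1$; fixing a maximum clique $K\subseteq G$, the residual set $\V(G)\setminus K$ has exactly $p+1$ vertices, and tightness of the inductive bounds forces the non-edges of $G$ to lie within some $(2p+1)$-vertex subset where they trace out precisely a single $C_{2p+1}$. Equivalently, $\overline{G}\cong\overline{K}_{m-p-1}\sqcup C_{2p+1}$, that is, $G\cong K_{m-p-1}+\overline{C}_{2p+1}$. The remaining subtlety is to exclude alternative non-edge configurations in $\overline{G}$, such as shorter odd cycles, multiple disjoint cycles, or extra chords added to a $C_{2p+1}$; each such alternative would admit an explicit bad $s$-coloring of $G$ with no monochromatic $K_{a_i}$, produced by adapting the arc-partition argument from the upper bound, contradicting arrowing.
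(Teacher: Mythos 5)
The paper does not actually prove this theorem: it is quoted from \cite{LU96} and \cite{LRU01} (with further proofs referenced in \cite{Nen00} and \cite{Nen01}), so there is no in-paper argument to compare against and your proposal must stand on its own. Its first half does: the verification that $K_{m-p-1}+\overline{C}_{2p+1}$ lies in $\mH(a_1,\ldots,a_s;m)$ is correct and complete --- the identity ``largest monochromatic $i$-clique $=c_i+\alpha_i$'' in the join, the summation giving $\sum_i\alpha_i\le p$, and the arc decomposition giving $\sum_i\alpha_i\ge\lceil(2p+1)/2\rceil=p+1$ (with the one-colour-covers-the-cycle case handled separately) fit together as claimed, and this yields $F_v(a_1,\ldots,a_s;m)\le m+p$.

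The lower bound and part (b) contain genuine gaps. First, the induction on $p$ is not well-founded: when two parameters equal $p$ (e.g.\ $F_v(3,3;5)$, where $m=5$, $p=3$), replacing $a_s$ by $a_s-1$ leaves $\max\set{a_1,\ldots,a_{s-1},a_s-1}=p$ unchanged, so ``the inductive hypothesis with strictly smaller $p$'' is simply unavailable; one must induct on $m$ or on the parameter multiset instead. Second, the proposed reduction fails as described: if the $(p-1)$-clique $S$ is coloured entirely with colour $s$, then a \emph{single} vertex of colour $s$ in the common neighbourhood $N$ already completes a monochromatic $K_p$, whereas a good colouring of $G[N]$ for the shifted list $(a_1,\ldots,a_{s-1},a_s-1)$ is allowed to use colour $s$ on cliques of size up to $a_s-2\ge1$; so $G\arrowsv(a_1,\ldots,a_s)$ does not give $G[N]\arrowsv(a_1,\ldots,a_{s-1},a_s-1)$ by the extension you describe. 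You also do not say how the vertices outside $S\cup N$ are to be coloured with the available colours, nor why a $(p-1)$-clique with the required neighbourhood exists --- precisely the points you flag as ``the main obstacle'' and leave open. Finally, the uniqueness statement (b), which is the substantive contribution of \cite{LRU01} beyond \cite{LU96}, is asserted rather than proved: ``tightness forces the non-edges to trace out a single $C_{2p+1}$'' does not follow from anything established, and excluding configurations such as $\overline{G}$ containing two disjoint odd cycles $C_{2k+1}$ and $C_{2(p-k)+1}$ plus isolated vertices genuinely requires the case analysis you defer. As written, only the upper bound in (a) is established.
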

The condition $m \geq p + 1$ is necessary according to (\ref{equation: F_v(a_1, ..., a_s; q) exists}). Other proofs of Theorem \ref{theorem: F_v(a_1, ..., a_s; m) = m + p} are given in \cite{Nen00} and \cite{Nen01}.\\

Very little is known about the numbers $F_v(a_1, ..., a_s; m - 1)$. According to (\ref{equation: F_v(a_1, ..., a_s; q) exists}) we have
\begin{equation}
\label{equation: F_v(a_1, ..., a_s; m - 1) exists}
F_v(a_1, ..., a_s; m - 1) \mbox{ exists } \Leftrightarrow m \geq p + 2.
\end{equation}

The following bounds are known:
\begin{equation}
\label{equation: m + p + 2 leq F_v(a_1, ..., a_s; m - 1) leq m + 3p}
m + p + 2 \leq F_v(a_1, ..., a_s; m - 1) \leq m + 3p,
\end{equation}
where the lower bound is true if $p \geq 2$ and the upper bound is true if $p \geq 3$. The lower bound is obtained in \cite{Nen00} and the upper bound is obtained in \cite{KN06a}. In the border case $m = p + 2$ the upper bounds in (\ref{equation: m + p + 2 leq F_v(a_1, ..., a_s; m - 1) leq m + 3p}) are significantly improved in \cite{SXP09}.

When $p = \max\set{a_1, ..., a_s} \leq 5$ we have
\begin{equation}
\label{equation: F_v(a_1, ..., a_s, m - 1) = ...}
F_v(a_1, ..., a_s, m - 1) = \begin{cases}
m + 4, & \emph{if $p = 2$ and $m \geq 6$, \cite{Nen83}}\\
m + 6, & \emph{if $p = 3$ and $m \geq 6$, \cite{Nen02}}\\
m + 7, & \emph{if $p = 4$ and $m \geq 6$, \cite{Nen02}}\\
m + 9, & \emph{if $p = 5$ and $m \geq 7$, \cite{BN15a}}.\\
\end{cases}
\end{equation}
In the cases $p = 2$ and $p = 3$ we also know the numbers: $F_v(2, 2, 2; 3) = 11$, \cite{Myc55} and \cite{Chv79}, $F_v(2, 2, 2, 2; 4) = 11$, \cite{Nen84} (see also \cite{Nen98}), $F_v(2, 2, 3; 4) = 14$, \cite{Nen00} and \cite{CR06}, $F_v(3, 3; 4) = 14$, \cite{Nen81} and \cite{PRU99}. These numbers and the numbers (\ref{equation: F_v(a_1, ..., a_s, m - 1) = ...}) are all the numbers in the form $F_v(a_1, ..., a_s; m - 1)$ when $\max\set{a_1, ..., a_s} \leq 5$. We do not know any of these numbers when $\max\set{a_1, ..., a_s} \geq 6$. In \cite{BN15a} we prove that
\begin{equation}
\label{equation: m + 9 leq F_v(a_1, ..., a_s) leq m + 10}
m + 9 \leq F_v(a_1, ..., a_s; m - 1) \leq m + 10,
\end{equation}
when $\max\set{a_1, ..., a_s} = 6$.

In this paper we complete the computation of these numbers by proving the following
\begin{theorem}
\label{theorem: F_v(a_1, ..., a_s; m - 1) = ..., max set(a_1, ..., a_s) = 6}
Let $a_1, ..., a_s$ be positive integers, such that
\begin{equation*}
2 \leq a_1 \leq ... \leq a_s = 6,
\end{equation*}
and $m = \sum\limits_{i=1}^s (a_i - 1) + 1 \geq 8$. Then
\begin{flalign*}
F_v(a_1, ..., a_s; m - 1) = m + 9, \mbox{ if } a_1 = ... = a_{s - 1} = 2. && \tag{a} 
\end{flalign*}
\begin{flalign*}
F_v(a_1, ..., a_s; m - 1) = m + 10, \mbox{ if } a_{s - 1} \geq 3. && \tag{b} 
\end{flalign*}
\end{theorem}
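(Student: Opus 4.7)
From (\ref{equation: m + p + 2 leq F_v(a_1, ..., a_s; m - 1) leq m + 3p}) with $p=6$ we already have $m+9 \le F_v(a_1,\dots,a_s;m-1) \le m+10$, so only the upper bound in (a) and the lower bound in (b) require proof.

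\emph{Reduction to base cases.} The join lemma states: if $H \in \mH(b_1,\dots,b_t;q)$ then $K_1 + H \in \mH(2,b_1,\dots,b_t;q+1)$ (proved by case-splitting on the colour of the apex, using that the colour-$1$ class is independent, as in \cite{Nen85}). This gives $F_v(2,b_1,\dots,b_t;q+1) \le F_v(b_1,\dots,b_t;q) + 1$. Iterating the join lemma $s-3$ times reduces (a) to the single base case $F_v(2,2,6;7) \le 17$. A matching lower-bound version (requiring $(+K_{q+1})$-maximality to ``peel off'' a universal vertex from the extremal graph) together with a standard neighbourhood reduction (if $G \in \mH_{+K_{q+1}}(a+1,6;q+1)$ and $v$ has maximum degree, then $G[N(v)] \in \mH(a,6;q)$) reduces (b) to the single base case $F_v(3,6;7) \ge 18$.

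\emph{The upper-bound base case.} I would produce an explicit $17$-vertex graph $G_0$ with $\omega(G_0) = 6$ and $G_0 \arrowsv (2,2,6)$. A promising approach is $G_0 = G_1 + v_0$, where $G_1 \in \mH_{extr}(2,2,5;6;16)$ is an extremal graph for the $p=5$ case from \cite{BN15a} and $v_0$ is a new vertex whose neighbourhood $N(v_0)$ is chosen so that every $K_5$ of $G_1$ is adjacent to $v_0$. This ensures that for any $3$-colouring of $G_0$ with no monochromatic edge in colours $1,2$, the arrow $G_1 \arrowsv (2,2,5)$ applied to the restriction to $V(G_1)$ yields a monochromatic $K_5$ in colour $3$, which extends via $v_0$ (when $v_0$ has colour $3$) to a $K_6$; the remaining cases (when $v_0$ has colour $1$ or $2$, forcing one colour class to disappear from $V(G_1)$) are ruled out using the arrow applied to the corresponding $2$-colouring of $G_1$. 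If no modification of the $p=5$ extremal graph yields the required $G_0$, one falls back on a direct computer construction, in the spirit of \cite{BN15a}.

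\emph{The lower-bound base case — the main obstacle.} The hardest step is $F_v(3,6;7) \ge 18$: no $17$-vertex graph $G$ with $\omega(G)\le 6$ satisfies $G \arrowsv (3,6)$. Equivalently, every such $G$ admits a partition $V(G) = A \cup B$ with $G[A]$ triangle-free and $G[B]$ $K_6$-free. The plan is to assume for contradiction such a $G$ exists; since $G \arrowsv (3,6)$ forces $\omega(G) \ge 6$ (take all of $V(G)$ in colour $2$), we have $\omega(G) = 6$, and we may take $G$ to be $(+K_7)$-maximal. The $(+K_7)$-maximality forces every non-edge to lie in a near-$K_7$, so each vertex lies in many $K_6$'s; simultaneously, $G \arrowsv (3,6)$ severely restricts where triangle-free subsets can sit relative to these $K_6$'s. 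Because $17$ is an extremely tight vertex budget, I expect the argument to reduce to a finite local-structure analysis completed by computer enumeration of all maximal $K_7$-free graphs of order $17$ containing a $K_6$, checking that none arrows $(3,6)$; this parallels the computer-assisted analysis for the $p=5$ case carried out in \cite{BN15a}.
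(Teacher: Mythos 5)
Your outline for part (a) is essentially sound and matches the paper's route: the lower bound $m+9$ is already known from (\ref{equation: m + 9 leq F_v(a_1, ..., a_s) leq m + 10}), and the upper bound follows by iterating the join lemma $G \arrowsv (a_1,\dots,a_s) \Rightarrow K_1+G \arrowsv (2,a_1,\dots,a_s)$ down to $F_v(2,2,6;7)\le 17$, which ultimately requires exhibiting a $17$-vertex graph; your proposed modification of a $p=5$ extremal graph is unlikely to work as sketched (when the apex $v_0$ gets colour $1$ or $2$, that colour class only vanishes from $N(v_0)$, not from all of $V(G_1)$, so the restriction argument breaks), but your fallback to a computer construction is what the paper in effect does, finding the three graphs $G_1,G_2,G_3$ of $\mH(2,2,6;7;17)$ by exhaustive search.

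The genuine gap is in the lower bound of part (b). You claim the reduction to the single base case $F_v(3,6;7)\ge 18$ by ``peeling off a universal vertex'' from an extremal graph; but an extremal graph in $\mH(2_r,3,6;r+7)$ need not contain a universal vertex, and without one the inequality $F_v(2_r,3,6;r+7)\ge F_v(2_{r-1},3,6;r+6)+1$ is exactly what is in doubt --- it is the content of the paper's Conjecture \ref{conjecture: rpp(p) = 0, p geq 4}, not a standard fact. (Your auxiliary claim that $G[N(v)]\in\mH(a,6;q)$ for a maximum-degree vertex is also false for vertex Folkman numbers: a bad colouring of $N(v)$ has no canonical extension to the non-neighbours of $v$.) The paper closes this gap with the $\rpp$ machinery of Theorem \ref{theorem: rpp}: part (d) bounds $\rpp(6)<4$, and the cases $\rpp(6)\in\set{1,2,3}$ are excluded by a separate, substantial computer verification that $\mH(2,2,2,3,6;10;20)=\emptyset$ (which implies $F_v(2,3,6;8)>18$, $F_v(2,2,3,6;9)>19$, $F_v(2,2,2,3,6;10)>20$). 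Nothing in your proposal supplies this step. Finally, for the base case itself, enumerating ``all maximal $K_7$-free graphs of order $17$ containing a $K_6$'' is computationally hopeless; the paper instead gets $F_v(3,6;7)\ge 18$ for free from the complete list $\mH(2,2,6;7;17)=\set{G_1,G_2,G_3}$ (built up through a chain of smaller classes stratified by independence number) together with $\mH(3,6;7)\subseteq\mH(2,2,6;7)$ and the check that $G_1\not\arrowsv(3,6)$.
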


We obtain the following bounds on numbers of the form $F_v(a_1, ..., a_s; 7)$ where $\max\set{a_1, ..., a_s} = 6$:

\begin{theorem}
\label{theorem: F_v(a_1, ..., a_s; 7) geq F_v(2_(m - 6), 6; 7) geq 3m - 5}
Let $a_1, ..., a_s$ be positive integers such that $\max\set{a_1, ..., a_s} = 6$ and $m = \sum\limits_{i=1}^s (a_i - 1) + 1 \geq 9$. Then
$$F_v(a_1, ..., a_s; 7) \geq F_v(2_{m - 6}, 6; 7) \geq 3m - 5.$$
In particular, $F_v(6, 6; 7) \geq 28$.
\end{theorem}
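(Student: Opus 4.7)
The plan is to establish the two claimed inequalities separately.

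For $F_v(a_1, \ldots, a_s; 7) \geq F_v(2_{m-6}, 6; 7)$, I use the standard splitting monotonicity: whenever $a_i \geq 3$, one has $G \arrowsv (a_1, \ldots, a_s) \Rightarrow G \arrowsv (a_1, \ldots, a_{i-1}, 2, a_i - 1, a_{i+1}, \ldots, a_s)$. Given a hypothetical bad $(s+1)$-coloring of $V(G)$ for the split parameters, I merge the new $2$-color class with the $(a_i - 1)$-color class into an $a_i$-color class; the hypothesis yields a monochromatic $a_j$-clique, and in the nontrivial case $j = i$ a pigeonhole on the clique's $a_i$ vertices forces either a monochromatic edge in the $2$-color (at least two vertices in that class) or an $(a_i - 1)$-clique in the $(a_i - 1)$-color (at least $a_i - 1$ vertices in that class), contradicting the assumption. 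Since $m = \sum_j(a_j - 1) + 1$ is preserved by splitting, iterating this move until every $a_i$ with $i < s$ is reduced to copies of $2$ (keeping $a_s = 6$ intact) yields $m - 6$ twos and one $6$, whence the first inequality.

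For the second inequality $F_v(2_{m-6}, 6; 7) \geq 3m - 5$ I proceed by induction on $m \geq 9$. The central sub-lemma is the removal reduction: for $G \in \mH(2_{m-6}, 6; 7)$ and any independent set $I \subseteq V(G)$, $G - I \in \mH(2_{m-7}, 6; 7)$, since any good $(m-6)$-coloring of $G - I$ would extend to a bad $(m-5)$-coloring of $G$ by placing $I$ into an additional $2$-color class, contradicting $G \arrowsv (2_{m-6}, 6)$. Combining this with the inductive hypothesis gives
$$|V(G)| \geq F_v(2_{m-7}, 6; 7) + \alpha(G) \geq (3m - 8) + \alpha(G),$$
so it suffices to show $\alpha(G) \geq 3$. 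If $\alpha(G) \leq 2$, then $\omega(G) \leq 6$ together with $R(3, 7) = 23$ forces $|V(G)| \leq 22$; but the same estimate with $|I| = 1$ already gives $|V(G)| \geq 3m - 7 \geq 23$ whenever $m \geq 10$, a contradiction. Hence $\alpha(G) \geq 3$ and the inductive step is complete. The particular claim $F_v(6, 6; 7) \geq 28$ is then the case $m = 11$.

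The main obstacle is the base case $m = 9$, namely $F_v(2_3, 6; 7) \geq 22$. The removal reduction here yields only $|V(G)| \geq F_v(2, 2, 6; 7) + \alpha(G) = 17 + \alpha(G)$ (using $F_v(2, 2, 6; 7) = 17$ from Theorem \ref{theorem: F_v(a_1, ..., a_s; m - 1) = ..., max set(a_1, ..., a_s) = 6}(a) of this paper), so closing the gap requires $\alpha(G) \geq 5$, which the Ramsey bounds $R(3, 7)$, $R(4, 7)$, $R(5, 7)$ do not immediately force. I would attack this case with a finer argument combining iterated removals down through $\mH(2, 6; 7)$ and $\mH(6; 7)$ (whose Folkman numbers $13$ and $6$ are known), lower bounds on intermediate independence numbers via $\chi(G)\alpha(G) \geq |V(G)|$ together with the strengthened arrow constraint $\chi(G - R) \geq 4$ for every $R$ with $\omega(G[R]) \leq 5$, and if necessary a finite enumeration of graphs on $\leq 22$ vertices with $\omega \leq 6$ and $\alpha \leq 4$.
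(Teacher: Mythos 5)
Your first inequality and your inductive step coincide with the paper's: the splitting/pigeonhole reduction to $(2_{m-6},6)$ is exactly the content of the paper's Theorem 9.2 (via relation (2.1)), and the induction step --- remove a $3$-element independent set, apply Proposition 2.2, and force $\alpha(G)\ge 3$ from $|V(G)|\ge 23=R(3,7)$ --- is verbatim the paper's argument. Both of these parts are correct as written.

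The genuine gap is the base case $F_v(2,2,2,6;7)\ge 22$, which you correctly identify as the obstacle but do not prove. There is no soft argument here: as you note, a hypothetical $G\in\mH(2,2,2,6;7;21)$ satisfies only $\alpha(G)\le 4$ (from $21-\alpha(G)\ge F_v(2,2,6;7)=17$), so the counting bound $|V(G)|\ge 17+\alpha(G)$ cannot reach $22$, and the chromatic-number constraints you propose ($\chi(G)\ge m=9$, etc.) are far too weak to close the gap. The paper's proof of the base case is an extensive computation: independence numbers $3$ and $4$ are excluded by running Algorithm 3.9 on the complete catalogues of $\mH_{max}(2,2,6;7;18)$ (all $392$ maximal graphs, from Theorem 7.1) and $\mH_{max}(2,2,6;7;17)$ respectively, and independence number $2$ is excluded by testing all $1\,118\,436$ known $21$-vertex graphs with $\alpha<3$ and $\omega<7$ from McKay's Ramsey-graph database. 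Your proposed fallback --- ``a finite enumeration of graphs on $\le 22$ vertices with $\omega\le 6$ and $\alpha\le 4$'' --- is not viable: no such enumeration exists, and the class is astronomically larger than the $\alpha\le 2$ class that is actually available; the feasible route is precisely the layered construction by independent-set extension that the paper's algorithms implement, which in turn depends on the earlier computational Theorems 4.2 and 7.1. So the skeleton of your proof is right, but the load-bearing step is missing and cannot be supplied by the tools you list.
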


\begin{theorem}
\label{theorem: F_v(a_1, ..., a_s; 7) leq F_v(6, 6; 7) leq 60}
Let $a_1, ..., a_s$ be positive integers such that $\max\set{a_1, ..., a_s} = 6$ and $m = \sum\limits_{i=1}^s (a_i - 1) + 1$. Then:
\begin{flalign*}
22 \leq F_v(a_1, ..., a_s; 7) \leq F_v(4, 6; 7) \leq 35 \mbox{ if $m = 9$}. && \tag{a} 
\end{flalign*}
\begin{flalign*}
28 \leq F_v(a_1, ..., a_s; 7) \leq F_v(6, 6; 7) \leq 70 \mbox{ if $m = 11$}. && \tag{b} 
\end{flalign*}
\end{theorem}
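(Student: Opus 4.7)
The theorem has three pieces: a lower bound, a chain of comparisons reducing to a single Folkman number, and an explicit upper-bound construction. The lower bounds $22$ and $28$ are simply the values of $3m-5$ at $m=9$ and $m=11$, so they drop out immediately from the previous theorem $F_v(a_1,\ldots,a_s;7)\geq F_v(2_{m-6},6;7)\geq 3m-5$. What is left is the middle inequality (which for $m=9$ says every eligible $F_v$ is dominated by $F_v(4,6;7)$, and for $m=11$ by $F_v(6,6;7)$) together with the two numerical bounds $F_v(4,6;7)\leq 35$ and $F_v(6,6;7)\leq 70$.

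\textbf{The splitting step.} For the middle inequalities the plan is a routine pigeonhole. If $G\arrowsv(a_1,\ldots,a_s)$ and $a_1=b+c-1$, then $G\arrowsv(b,c,a_2,\ldots,a_s)$: from an $(s+1)$-colouring of $V(G)$ merge the two new colour classes into a single colour $1$, apply the given arrow to obtain an $a_1$-clique in colour $1$ (or else an $a_i$-clique in some $i\geq 2$, in which case we are done), and observe that its $b+c-1$ vertices cannot be split so that both sub-classes have $<b$ and $<c$ members respectively. Hence $F_v(b,c,a_2,\ldots,a_s;q)\leq F_v(a_1,a_2,\ldots,a_s;q)$. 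For $m=9$ and $\max=6$ the only eligible tuples are $(4,6),(2,3,6),(2,2,2,6)$, and each is obtained from $(4,6)$ by one or two splittings of the $4$, so all of them satisfy $F_v(\,\cdot\,;7)\leq F_v(4,6;7)$. For $m=11$ and $\max=6$ one lists the eligible tuples $(6,6),(2,5,6),(3,4,6),(2,2,4,6),(2,3,3,6),(2,2,2,3,6),(2_5,6)$; each is reachable from $(6,6)$ by iteratively splitting the first $6$ (never touching the second), so the maximum $6$ is preserved and every $F_v(\,\cdot\,;7)$ is bounded by $F_v(6,6;7)$.

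\textbf{The two explicit constructions.} What remains, and where the real work lies, is to exhibit $K_7$-free graphs $G_9$ and $G_{11}$ of orders $35$ and $70$ satisfying $G_9\arrowsv(4,6)$ and $G_{11}\arrowsv(6,6)$. My first attempt would be to seek $G$ of the form $K_r+H$, exploiting that $\omega(K_r+H)=r+\omega(H)$ (so $H$ needs $\omega(H)\leq 6-r$) and that a $2$-colouring of $V(K_r+H)$ forces, on the $K_r$ side, a monochromatic set of size $\geq\lceil r/2\rceil$ that can be grafted onto a clique coming from an arrow property of $H$ with reduced parameters. With $r$ chosen to make the two Ramsey-type residues small (e.g.\ $r=3$ in the $(4,6)$ case, reducing to an arrow involving $(4{-}3,6{-}3)=(1,3)$-style conditions on $H$ after case analysis), the residual graph $H$ is small enough to be handled by a targeted computer search through $K_4$- or $K_3$-free graphs. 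The $35$-vertex target strongly suggests a join of this kind with $H$ on roughly $30$ vertices; the $70$-vertex target for $(6,6)$ suggests a more involved join or a blow-up construction, very likely produced and verified by computer.

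\textbf{Main obstacle.} The bookkeeping above is easy; the hard part is the two explicit graphs. Proving $\omega(G)\leq 6$ is straightforward once $G$ is written down, but the arrow property requires ruling out every $2$-colouring of up to $70$ vertices. At this scale hand-verification is out of reach and one must rely either on a structural reduction (e.g.\ showing that it suffices to analyse colourings of a clearly identified small skeleton, such as the $K_r$ factor and a transversal of $H$) or on computer search. I would expect the authors' actual proof to present the graphs $G_9$ and $G_{11}$ concretely, to establish the $K_7$-freeness directly, and to verify the arrow property by a combination of structural lemmas (perhaps already developed earlier in the paper to prove Theorem~\ref{theorem: F_v(a_1, ..., a_s; m - 1) = ..., max set(a_1, ..., a_s) = 6}) and a short computer check; the bounds $35$ and $70$ are unlikely to be tight.
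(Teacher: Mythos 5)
Your lower bounds and your middle inequalities are fine and match the paper: the values $22$ and $28$ are exactly $3m-5$ at $m=9,11$ from Theorem~\ref{theorem: F_v(a_1, ..., a_s; 7) geq F_v(2_(m - 6), 6; 7) geq 3m - 5}, and your splitting step is precisely the paper's implication (\ref{equation: G arrowsv (a_1, ..., a_s) Rightarrow G arrowsv (a_1, ..., a_(i - 1), t, a_i - t, a_(i + 1), ..., a_s)}), which gives $\mH(4,6;7)\subseteq\mH(a_1,\ldots,a_s;7)$ (resp.\ $\mH(6,6;7)\subseteq\mH(a_1,\ldots,a_s;7)$) and hence the comparison with $F_v(4,6;7)$ (resp.\ $F_v(6,6;7)$).

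The genuine gap is in the upper bounds $F_v(4,6;7)\leq 35$ and $F_v(6,6;7)\leq 70$. You correctly identify that this is where the real work lies, but you never produce the graphs: you only sketch a join-plus-computer-search programme and concede that "hand-verification is out of reach." A proof that ends with "one would then search for $H$" is not a proof of the stated inequalities. Moreover, the shape of the numbers should have been a tip-off that no search is needed: $35=5\cdot 7$ and $70=5\cdot 14$. The paper invokes Kolev's multiplicative inequality
\begin{equation*}
F_v(a_1b_1,\ldots,a_sb_s;\,qt+1)\;\leq\;F_v(a_1,\ldots,a_s;\,q+1)\cdot F_v(b_1,\ldots,b_s;\,t+1),
\end{equation*}
applied with $6=2\cdot 3$, $4=2\cdot 2$ and $6=2\cdot 3+1-1$ in the clique bound ($q=2$, $t=3$, $qt+1=7$), together with the known values $F_v(2,2;3)=5$, $F_v(2,3;4)=7$ and $F_v(3,3;4)=14$; this yields $F_v(4,6;7)\leq 5\cdot 7=35$ and $F_v(6,6;7)\leq 5\cdot 14=70$ with no new construction and no computation. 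If you do not have the product theorem available, you must actually exhibit and verify the two graphs, which your proposal does not do; as written, the upper-bound half of the theorem remains unproved.
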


We also obtain the following results related to the numbers $F_v(a_1, ..., a_s; m - 1)$ where $\max\set{a_1, ..., a_s} = 7$.
\begin{theorem}
\label{theorem: F_v(2, 2, 7; 8) = 20}
$F_v(2, 2, 7; 8) = 20$.
\end{theorem}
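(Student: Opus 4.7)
The plan is to prove $F_v(2,2,7;8) = 20$ by establishing the two bounds separately. For the parameters $(2,2,7)$ we have $m=9$ and $p=7$, so (\ref{equation: m + p + 2 leq F_v(a_1, ..., a_s; m - 1) leq m + 3p}) already yields $F_v(2,2,7;8) \geq m+p+2 = 18$, and only orders $n \in \{18, 19\}$ need to be excluded on the lower side.

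For the upper bound $F_v(2,2,7;8) \le 20$, I would exhibit an explicit graph $G$ on $20$ vertices with $\omega(G) \le 7$ and $G \arrowsv (2,2,7)$, in the same spirit as the authors' constructions for the cases $p \le 6$ (Theorem \ref{theorem: F_v(a_1, ..., a_s; m - 1) = ..., max set(a_1, ..., a_s) = 6}). A natural family of candidates consists of joins of the form $K_j + H$, chosen so that $j + \omega(H) \le 7$ and so that in every $3$-colouring of the join with two independent colour classes the $K_j$-summand forces at least $7-j$ of its vertices into the third class, while an arrowing property of $H$ contributes a further $K_{7-j}$ in the restriction to that class. Once a concrete candidate $G$ is fixed, $\omega(G) \le 7$ is routine and the arrowing property reduces to a finite case analysis on the colours of the $j$ join-vertices, completed by computer if necessary.

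For the lower bound $F_v(2,2,7;8) \ge 20$, suppose for contradiction that $G \in \mH(2,2,7;8)$ with $n := |\V(G)| \in \{18, 19\}$. The decisive structural observation is that for every independent set $I \subseteq \V(G)$, assigning colour~$1$ to $I$ forces $G - I \in \mH(2,7;8)$: any $2$-colouring $V_2 \cup V_3$ of $\V(G) \setminus I$ together with $V_1 = I$ is a legal $3$-colouring of $G$, and since $G \arrowsv (2,2,7)$ and $I$ is independent, either $V_2$ contains an edge or $V_3$ contains a $K_7$. By Theorem \ref{theorem: F_v(a_1, ..., a_s; m) = m + p} we have $F_v(2,7;8) = 15$ with the \emph{unique} extremal graph $\overline{C}_{15}$, so $n - |I| \ge 15$, hence $\alpha(G) \le n - 15 \le 4$; moreover, whenever $n - \alpha(G) = 15$, the graph $G - I$ is forced to be isomorphic to $\overline{C}_{15}$ for every maximum independent set $I$. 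The problem then reduces to classifying, in each of the remaining subcases $(n, \alpha(G))$, the $K_8$-free graphs obtained from $\overline{C}_{15}$ or from a larger member of $\mH(2,7;8)$ by attaching up to four pairwise non-adjacent vertices whose neighbourhoods in the base graph contain no $K_7$, and verifying that none of the resulting graphs lies in $\mH(2,2,7;8)$.

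The main obstacle is precisely this final classification step: although $\overline{C}_{15}$ has large dihedral symmetry, the number of inequivalent attachments preserving $\omega \le 7$ is substantial and verifying the arrowing property of each candidate is itself delicate. I expect the step to be carried out by a computer search over the relevant $(+K_8)$-extensions of the base graphs, stratified by the value of $\alpha(G)$ and exploiting the automorphisms of the base graphs to keep the enumeration tractable.
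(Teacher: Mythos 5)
Your lower-bound plan is essentially the paper's: bound $\alpha(G)\leq n-15$ via (\ref{equation: G in mH(a_1, ..., a_s; m - 1; n) Rightarrow alpha(G) leq n - m - p + 1}), peel off a maximum independent set to land in $\mH(2,7;8)$, and enumerate by computer all ways of re-attaching independent vertices; the paper does exactly this with Algorithms \ref{algorithm: mH_(max)(a_1, ..., a_s; q; n), alpha(G) = 2} and \ref{algorithm: mH_(max)(a_1, ..., a_s; q; n)} (and only needs $n=19$, since $\mH(2,2,7;8)$ is closed under adding isolated vertices). What you underestimate is that the base sets are not just $\overline{C}_{15}$: for $\alpha(G)\leq 3$ one must first generate all $(+K_7)$-graphs in $\mH(2,7;8;16)$ and $\mH(2,7;8;17)$ of small independence number, which the paper obtains by a further recursive peeling down to $\mH_{max}(4;8;8)$ and $\mH_{max}(4;8;9)$ (Table \ref{table: finding all graphs in H(2, 2, 7; 8; 19)} records hundreds of millions of such graphs). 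This is a matter of missing detail rather than a wrong idea.

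The upper bound, however, contains a genuine error: no graph of the form $K_j+H$ with $j\geq 1$ can lie in $\mH(2,2,7;8)$, so your proposed candidate family is empty. Indeed, let $v$ be a vertex of the $K_j$-summand and colour $V_1=\set{v}$, $V_2=\emptyset$, $V_3=\V(G)\setminus\set{v}$. Both $V_1$ and $V_2$ are independent, so $G\arrowsv(2,2,7)$ forces $K_7\subseteq G-v=K_{j-1}+H$, i.e.\ $(j-1)+\omega(H)\geq 7$; but then $\omega(G)=j+\omega(H)\geq 8$, contradicting $K_8$-freeness. (This is the same obstruction that makes the $q=m-1$ case so much harder than $q=m$, where the extremal graph $K_{m-p-1}+\overline{C}_{2p+1}$ of Theorem \ref{theorem: F_v(a_1, ..., a_s; m) = m + p} \emph{is} a join.) You therefore have no witness for $F_v(2,2,7;8)\leq 20$; the generic bound (\ref{equation: m + p + 2 leq F_v(a_1, ..., a_s; m - 1) leq m + 3p}) only gives $\leq 30$. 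The paper instead finds its $20$-vertex graph $G_5$ by a computer search: it filters the catalogue of vertex-transitive graphs on at most $31$ vertices for members of $\mH(2,2,7;8)$, then repeatedly deletes vertices and applies the local add/remove-edge Procedure \ref{procedure: populate} to descend from $24$ to $20$ vertices. Some such explicit construction or search is indispensable here, and your proposal does not supply one.
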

\begin{theorem}
\label{theorem: F_v(a_1, ..., a_s; m - 1) leq m + 12, max set(a_1, ..., a_s) = 7}
Let $a_1, ..., a_s$ be positive integers, such that $\max\set{a_1, ..., a_s} = 7$ and $m = \sum\limits_{i=1}^s (a_i - 1) + 1 \geq 9$. Then
\begin{flalign*}
m + 10 \leq F_v(a_1, ..., a_s; m - 1) \leq m + 12. 
\end{flalign*}
\end{theorem}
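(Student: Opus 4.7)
The plan is to prove the upper and lower bounds separately, using the extremal $20$-vertex graph $G^* \in \mH(2, 2, 7; 8)$ from Theorem \ref{theorem: F_v(2, 2, 7; 8) = 20} and Theorem \ref{theorem: F_v(a_1, ..., a_s; m) = m + p}.

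\emph{Upper bound.} First, a projection argument, mapping any $(k+1)$-coloring $W_1, \ldots, W_{k+1}$ of $G^*$ to the $3$-coloring $V_1 = W_1$, $V_2 = W_2$, $V_3 = W_3 \cup \cdots \cup W_{k+1}$, shows via $G^* \arrowsv (2, 2, 7)$ that $G^* \arrowsv (2_k, 9 - k)$ for $2 \leq k \leq 7$ (a $K_7$ in $V_3$ would force a $(9-k)$-clique in $W_{k+1}$), and in particular $\chi(G^*) \geq 9$. For tuples of the form $(2_{s-1}, 7)$, the graph $K_{m-9} + G^*$ has $m + 11$ vertices and $\omega \leq m - 2$; given an $s$-coloring, let $u_j$ be the number of vertices of color $j$ in $K_{m-9}$ and set $\beta_j := (a_j - 1) - u_j$ for $j < s$ and $\beta_s := 6 - u_s$. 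If no monochromatic target-clique exists then $\omega_{G^*}(j) \leq \beta_j$ and $\sum_j \beta_j = 8$; partitioning the colors into three groups with $\beta$-sums $\leq (1, 1, 6)$ yields a $3$-coloring of $G^*$ that, by subadditivity of $\omega$ under partition, contradicts $G^* \arrowsv (2, 2, 7)$. Hence $F_v \leq m + 11 \leq m + 12$ in this case. For tuples with $a_{s-1} \geq 3$, the packing of $\beta$'s into $(1, 1, 6)$ may fail (e.g., $(3, 7)$ yields $\beta = (2, 6)$), so one either augments the construction by a single carefully chosen extra vertex (keeping $\omega \leq m - 2$), or applies the Łuczak-Urbański-type reduction $F_v(a_1, \ldots, a_s, 2; q + 1) \leq F_v(a_1, \ldots, a_s; q) + 1$ (proved via $K_1 + G$ combined with a merge-of-colors argument) together with Theorem \ref{theorem: F_v(a_1, ..., a_s; m - 1) = ..., max set(a_1, ..., a_s) = 6} applied to sub-tuples of maximum $\leq 6$.

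\emph{Lower bound.} Suppose $G \in \mH(a_1, \ldots, a_s; m - 1)$ has $|V(G)| = m + 9$. Since $\omega(G) < m$, also $G \in \mH(a_1, \ldots, a_s; m)$, so Theorem \ref{theorem: F_v(a_1, ..., a_s; m) = m + p}(a) gives $|V(G)| \geq m + 7$. Theorem \ref{theorem: F_v(a_1, ..., a_s; m) = m + p}(b) identifies the unique $(m + 7)$-vertex extremal as $K_{m-8} + \overline{C}_{15}$, which has $\omega = m - 1$ and is excluded by the stronger constraint $\omega(G) \leq m - 2$. A structural analysis of graphs on $m + 8$ or $m + 9$ vertices in $\mH(a_1, \ldots, a_s; m)$ satisfying $\omega \leq m - 2$ then rules out $|V(G)| = m + 9$, sharpening (\ref{equation: m + p + 2 leq F_v(a_1, ..., a_s; m - 1) leq m + 3p}) by one.

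The main obstacle is the upper bound when $a_{s-1} \geq 3$: the plain $K_{m-9} + G^*$ construction no longer arrows, so a more delicate vertex-augmentation or inductive reduction is needed, with small-$m$ base cases (notably $m = 9, 10$) requiring explicit constructions. For the lower bound, the hard part is to pin down the structure of near-extremals in $\mH(\ldots; m)$ with $\omega \leq m - 2$ tightly enough to exclude size $m + 9$.
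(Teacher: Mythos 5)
Your proposal leaves both halves of the theorem unproved at exactly the points where the paper's actual work lies. For the lower bound, the paper does not perform a new structural analysis: it feeds the computed value $F_v(2,2,7;8)=20\geq 2p+5=19$ into the criterion of Theorem \ref{theorem: F_v(a_1, ..., a_s; m - 1) geq m + p + 3} (cited from [Nen02]), which immediately gives $F_v(a_1,\dots,a_s;m-1)\geq m+p+3=m+10$ for all $m\geq p+2$. Your sketch only rules out $m+7$ (via the uniqueness of $K_{m-8}+\overline{C}_{15}$) and defers $m+8$ and $m+9$ to an unspecified ``structural analysis''; that analysis is precisely the content of the cited theorem, and you never connect it to the one hypothesis that makes it applicable here, namely $F_v(2,2,7;8)\geq 19$.

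For the upper bound, your $K_{m-9}+G^*$ construction correctly gives $F_v(2_{m-7},7;m-1)\leq m+11$, but the case $a_{s-1}\geq 3$ is the whole difficulty and neither of your two fallbacks closes it. The reduction to max-$6$ sub-tuples cannot work: the splitting implication (\ref{equation: G arrowsv (a_1, ..., a_s) Rightarrow G arrowsv (a_1, ..., a_(i - 1), t, a_i - t, a_(i + 1), ..., a_s)}) preserves $m$ and yields $F_v(\mbox{split tuple};q)\leq F_v(\mbox{original};q)$, so results for $\max=6$ give \emph{lower} bounds for tuples containing a $7$, not upper bounds; and removing $2$'s never eliminates the entry $7$. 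Your other fallback (``a single carefully chosen extra vertex'') is the paper's route, but stated this way it has no content: the proof consists of exhibiting the explicit $21$-vertex graph $G_6$ (one vertex added to $G_5\in\mH(2,2,7;8;20)$), verifying by computer that $G_6\arrowsv(3,7)$, $G_6\arrowsv(4,6)$ and $G_6\arrowsv(5,5)$ — which by splitting gives $G_6\arrowsv\uni{9}{7}$, hence $\wFv{9}{7}{8}\leq 21$ — and then lifting this single base case to all $m\geq 9$ via Theorem \ref{theorem: F_v(2_(m - p), p; q) leq F_v(a_1, ..., a_s; q) leq wFv(m)(p)(q)} and Theorem \ref{theorem: wFv(m)(p)(m - m_0 + q) leq wFv(m_0)(p)(q) + m - m_0}, giving $F_v(a_1,\dots,a_s;m-1)\leq\wFv{9}{7}{8}+m-9\leq m+12$. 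Without the explicit graph and its verified arrowing properties there is no proof of the upper bound.
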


\begin{remark}
\label{remark: m geq 8}
According to (\ref{equation: F_v(a_1, ..., a_s; m - 1) exists}) the condition $m \geq 8$ from Theorem \ref{theorem: F_v(a_1, ..., a_s; m - 1) = ..., max set(a_1, ..., a_s) = 6} and the condition $m \geq 9$ from Theorem \ref{theorem: F_v(a_1, ..., a_s; m - 1) leq m + 12, max set(a_1, ..., a_s) = 7} are necessary.
\end{remark}

This paper is organized in sections. In the first section the necessary definitions are given, an overview of the known results is provided and at the end we formulate the obtained new results. In the second section we formulate some auxiliary propositions. In the third section we present computer algorithms for finding the maximal graphs in $\mH(a_1, ..., a_s; q; n)$. In the fourth section we find all extremal graphs in $\mH(2, 2, 6; 7)$ and compute the numbers $F_v(2, 2, 6; 7) = 17$ and $F_v(3, 6; 7) = 18$. In the fifth section we prove Theorem \ref{theorem: F_v(a_1, ..., a_s; m - 1) = ..., max set(a_1, ..., a_s) = 6} (a), and in the sixth section we prove Theorem \ref{theorem: F_v(a_1, ..., a_s; m - 1) = ..., max set(a_1, ..., a_s) = 6} (b). In the seventh section we find all graphs in $\mH(2, 2, 6; 7; 18)$ and we prove Theorem \ref{theorem: F_v(a_1, ..., a_s; 7) geq F_v(2_(m - 6), 6; 7) geq 3m - 5} and Theorem \ref{theorem: F_v(a_1, ..., a_s; 7) leq F_v(6, 6; 7) leq 60}. In the eight section we show that $F_v(2, 2, 7; 8) = 20$ (Theorem \ref{theorem: F_v(2, 2, 7; 8) = 20}) and we find some extremal graphs in $\mH(2, 2, 7; 8)$. In the last ninth section we prove Theorem \ref{theorem: F_v(a_1, ..., a_s; m - 1) leq m + 12, max set(a_1, ..., a_s) = 7}.  

This paper has a previous version (\href{https://arxiv.org/abs/1512.02051}{arXiv:1512.02051}). In the current version we use new faster algorithms with the help of which we reproduce the results from the previous version and we also prove new theorems (Theorem \ref{theorem: F_v(a_1, ..., a_s; 7) geq F_v(2_(m - 6), 6; 7) geq 3m - 5}, Theorem \ref{theorem: F_v(a_1, ..., a_s; 7) leq F_v(6, 6; 7) leq 60}, Theorem \ref{theorem: F_v(2, 2, 7; 8) = 20} and Theorem \ref{theorem: F_v(a_1, ..., a_s; m - 1) leq m + 12, max set(a_1, ..., a_s) = 7}).

\section{Some auxiliary results}

Let $a_1, ..., a_s$ be positive integers and $m = \sum\limits_{i=1}^s (a_i - 1) + 1$. Obviously, if $a_i \geq t \geq 2$, then
\begin{equation}
\label{equation: G arrowsv (a_1, ..., a_s) Rightarrow G arrowsv (a_1, ..., a_(i - 1), t, a_i - t, a_(i + 1), ..., a_s)}
G \arrowsv (a_1, ..., a_s) \Rightarrow G \arrowsv (a_1, ..., a_{i - 1}, t, a_i - t + 1, a_{i + 1}, ..., a_s).
\end{equation}
By repeatedly applying (\ref{equation: G arrowsv (a_1, ..., a_s) Rightarrow G arrowsv (a_1, ..., a_(i - 1), t, a_i - t, a_(i + 1), ..., a_s)}) we obtain
\begin{equation}
\label{equation: G arrowsv (a_1, ..., a_s) Rightarrow G arrowsv (2_(m - 1))}
G \arrowsv (a_1, ..., a_s) \Rightarrow G \arrowsv (2_{m - 1}).
\end{equation}
Since $G \arrowsv (2_{m - 1}) \Leftrightarrow \chi(G) \geq m$, from (\ref{equation: G arrowsv (a_1, ..., a_s) Rightarrow G arrowsv (2_(m - 1))}) it follows
\begin{equation}
\label{equation: G arrowsv (a_1, ..., a_s) Rightarrow chi(G) geq m}
G \arrowsv (a_1, ..., a_s) \Rightarrow \chi(G) \geq m. \cite{Nen01}
\end{equation}
This fact will be necessary in the proof of Theorem \ref{theorem: rpp}. Very simple examples of graphs for which equality is reached in (\ref{equation: G arrowsv (a_1, ..., a_s) Rightarrow chi(G) geq m}) are obtained in \cite{Nen85}.

\begin{conjecture}
\label{conjecture: chi(G) leq m + 1}
If $G \in \mH_{extr}(a_1, ..., a_s; m - 1)$, then $\chi(G) \leq m + 1$.
\end{conjecture}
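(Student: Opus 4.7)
The plan is a contradiction argument. Suppose $G \in \mH_{extr}(a_1, \dots, a_s; m - 1)$ satisfies $\chi(G) \geq m + 2$. The goal is to produce $G^{*}$ with $G^{*} \in \mH(a_1, \dots, a_s; m - 1)$ and $\abs{\V(G^{*})} < \abs{\V(G)}$, contradicting the definition of $F_v(a_1, \dots, a_s; m - 1)$.

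First I would collect the immediate structural constraints. By (\ref{equation: m + p + 2 leq F_v(a_1, ..., a_s; m - 1) leq m + 3p}) we have $\abs{\V(G)} \leq m + 3p$, while $\chi(G) \geq m + 2$ forces the color classes of any optimum coloring to be small on average (at most $(m + 3p)/(m + 2)$), so singleton color classes must appear in abundance. Also $\omega(G) \leq m - 2$, and since removing one vertex drops the chromatic number by at most $1$, for every vertex $v$ we have $\chi(G - v) \geq m + 1$, which by (\ref{equation: G arrowsv (a_1, ..., a_s) Rightarrow chi(G) geq m}) amounts to $G - v \arrowsv (2_m)$.

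Second, I would attempt a vertex-deletion reduction. Pick a vertex $v$ lying in a singleton color class of some optimum coloring of $G$. The crucial step is to upgrade the weak information $G - v \arrowsv (2_m)$ to the full arrowing $G - v \arrowsv (a_1, \dots, a_s)$. If this succeeds, then since $\omega(G - v) \leq \omega(G) \leq m - 2$ one obtains $G - v \in \mH(a_1, \dots, a_s; m - 1)$, which gives the required contradiction. If on the other hand, for every singleton-class vertex $v$ there is a "bad" $s$-coloring $c_v$ of $\V(G - v)$ avoiding monochromatic $a_i$-cliques, I would try to splice two colorings $c_{v_1}, c_{v_2}$ corresponding to distinct such vertices into an $s$-coloring of $\V(G)$ with no monochromatic $a_i$-clique, directly contradicting $G \arrowsv (a_1, \dots, a_s)$.

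Third, in parallel with the general attempt, I would verify the conjecture directly in every case in which the extremal graphs are already enumerated: $p \in \{2, 3, 4, 5\}$ from (\ref{equation: F_v(a_1, ..., a_s, m - 1) = ...}), $p = 6$ from Theorem~\ref{theorem: F_v(a_1, ..., a_s; m - 1) = ..., max set(a_1, ..., a_s) = 6} together with the extremal graphs produced by the algorithms of Section~3, and the case $F_v(2, 2, 7; 8) = 20$ from Theorem~\ref{theorem: F_v(2, 2, 7; 8) = 20}. In each case the conjecture reduces to a chromatic-number check on a finite computed list of graphs, which provides supporting evidence and the natural base cases for an inductive proof. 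The main obstacle is exactly the upgrade $G - v \arrowsv (2_m) \Rightarrow G - v \arrowsv (a_1, \dots, a_s)$ in step two: the converse implication (from a tuple to $2_{m-1}$) is free via (\ref{equation: G arrowsv (a_1, ..., a_s) Rightarrow G arrowsv (2_(m - 1))}), but one or two extra chromatic colors do not a priori convert into the much stronger vertex-arrowing property, since otherwise the Folkman problem would essentially reduce to a chromatic-number computation. Closing this gap appears to require either a new structural property tailored to extremal Folkman graphs, or explicit control of what extremal graphs can look like beyond the ranges $p \leq 7$ covered in this paper.
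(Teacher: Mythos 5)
This statement is stated in the paper as a \emph{conjecture}, and the paper gives no proof of it: the only support offered is the remark that the inequality $\chi(G) \leq m + 1$ holds for every known extremal graph, including the graphs $G_1, G_2, G_3 \in \mH_{extr}(2,2,6;7)$ and $G_4 \in \mH_{extr}(3,6;7)$ computed in Sections 4 and 7. So there is no ``paper proof'' for your argument to be measured against; a complete proof would be a new result. Your proposal does not supply one, and you acknowledge this yourself: the entire difficulty is concentrated in the step that upgrades $\chi(G - v) \geq m + 1$, i.e.\ $G - v \arrowsv (2_m)$, to the full arrowing $G - v \arrowsv (a_1, \dots, a_s)$, and you leave that step open. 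The gap is not a technicality. The implication (\ref{equation: G arrowsv (a_1, ..., a_s) Rightarrow chi(G) geq m}) is genuinely one-directional: a surplus of one or two colors over the bound $\chi(G) \geq m$ carries essentially no arrowing information (the paper's own extremal graphs have $\chi = m + 1$ with (\ref{equation: G arrowsv (a_1, ..., a_s) Rightarrow chi(G) geq m}) strict, yet, e.g., $G_1 \not\arrowsv (3,6)$ while $G_4 \arrowsv (3,6)$ at the same chromatic number). Your fallback --- splicing two bad colorings $c_{v_1}$, $c_{v_2}$ of $G - v_1$ and $G - v_2$ into a bad coloring of $G$ --- is likewise unsubstantiated: the two colorings need not agree on $\V(G) \setminus \set{v_1, v_2}$, and there is no mechanism offered for reconciling them.

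Two of your preliminary observations also do not hold up. From $\abs{\V(G)} \leq m + 3p$ and $\chi(G) \geq m + 2$ you conclude that singleton color classes ``must appear in abundance,'' but the average class size is $(m+3p)/(m+2)$, which in the border case $m = p + 2$ is close to $4$ for large $p$; nothing forces any singleton class to exist, so the vertex $v$ your reduction pivots on may not be available. (Using the sharper upper bounds of Theorems \ref{theorem: F_v(a_1, ..., a_s; m - 1) = ..., max set(a_1, ..., a_s) = 6} and \ref{theorem: F_v(a_1, ..., a_s; m - 1) leq m + 12, max set(a_1, ..., a_s) = 7} would repair this for $p \leq 7$, but not in general.) The part of your plan that does work --- direct verification of $\chi(G) \leq m + 1$ on the finitely many enumerated extremal graphs for $p \leq 7$ --- is exactly what the paper already does, and it yields supporting evidence rather than a proof.
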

For all known examples of extremal graphs, including the extremal graphs obtained in this paper, this inequality holds.\\

Let the numbers $a_1, ..., a_s$ satisfy the inequalities
\begin{equation*}
2 \leq a_1 \leq ... \leq a_s = p.
\end{equation*} 
As before, by repeatedly applying (\ref{equation: G arrowsv (a_1, ..., a_s) Rightarrow G arrowsv (a_1, ..., a_(i - 1), t, a_i - t, a_(i + 1), ..., a_s)}) we obtain that if $a_{s - 1} \geq 3$
\begin{equation}
\label{equation: G arrowsv (a_1, ..., a_s) Rightarrow G arrowsv (2_(m - p - 2), 3, p)}
G \arrowsv (a_1, ..., a_s) \Rightarrow G \arrowsv (2_{m - p - 2}, 3, p),
\end{equation}
and therefore it is true that
\begin{equation}
\label{equation: F_v(2_(m - p - 2), 3, p; m - 1) leq F_v(a_1, ..., a_s; m - 1)}
F_v(2_{m - p - 2}, 3, p; m - 1) \leq F_v(a_1, ..., a_s; m - 1).
\end{equation}

Further, we will use the following obvious
\begin{proposition}
\label{proposition: G - A arrowsv (a_1, ..., a_(i - 1), a_i - 1, a_(i + 1_, ..., a_s)}
Let $G$ be a graph, $G \arrowsv (a_1, ..., a_s)$ and $A \subseteq \V(G)$ be an independent set. Then if $a_i \geq 2$
\begin{equation*}
G - A \arrowsv (a_1, ..., a_{i - 1}, a_i - 1, a_{i + 1}, ..., a_s).
\end{equation*}
\end{proposition}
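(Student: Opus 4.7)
The plan is to argue by contrapositive. I would assume $G - A \not\arrowsv (a_1, \dots, a_{i-1}, a_i - 1, a_{i+1}, \dots, a_s)$ and build from a witnessing coloring of $G - A$ an $s$-coloring of $G$ with no ``bad'' monochromatic clique, which contradicts the assumption that $G \arrowsv (a_1, \dots, a_s)$.

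Concretely, let $\chi'$ be an $s$-coloring of $\V(G - A)$ such that for every $j \neq i$ no color class $j$ contains an $a_j$-clique, and the color class $i$ contains no $(a_i - 1)$-clique. I would extend $\chi'$ to an $s$-coloring $\chi$ of $G$ by assigning color $i$ to every vertex of $A$. Since $G \arrowsv (a_1, \dots, a_s)$, there must exist some color $j$ and a monochromatic $a_j$-clique $Q$ of color $j$ under $\chi$.

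I would then split into two cases. If $j \neq i$, all vertices of $A$ have color $i \neq j$, so $Q \cap A = \emptyset$ and hence $Q \subseteq \V(G - A)$ is a monochromatic $a_j$-clique of color $j$ under $\chi'$, contradicting the choice of $\chi'$. If $j = i$, the key point is that $A$ is independent, which forces $\abs{Q \cap A} \leq 1$ and therefore $\abs{Q \setminus A} \geq a_i - 1$; then $Q \setminus A$ is a monochromatic $(a_i - 1)$-clique of color $i$ in $G - A$, again contradicting the choice of $\chi'$.

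There is no genuine obstacle here — the statement is flagged as ``obvious'' for good reason — but the single technical hinge of the argument is the independence of $A$: it is exactly this hypothesis that caps $\abs{Q \cap A}$ at one and so guarantees that the target clique loses \emph{at most} one vertex when restricted to $G - A$. This is why the index $a_i$ drops by exactly one, and it indicates that the independence assumption on $A$ cannot be weakened.
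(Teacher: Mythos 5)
Your proof is correct: the paper states this proposition without proof (explicitly calling it ``obvious''), and your contrapositive argument --- extend a good coloring of $G-A$ by giving every vertex of $A$ color $i$, then use the independence of $A$ to ensure any clique meets $A$ in at most one vertex --- is precisely the standard argument the authors intend. You also correctly identify that the hypothesis $a_i \geq 2$ and the independence of $A$ are exactly what make the index drop by one.
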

Let $G \in \mH(a_1, ..., a_s; m - 1; n)$ and $A$ be an independent set of vertices of $G$ such that $\abs{A}$ = $\alpha(G)$. According to Proposition \ref{proposition: G - A arrowsv (a_1, ..., a_(i - 1), a_i - 1, a_(i + 1_, ..., a_s)}, $G - A \in \mH(a_1 - 1, ..., a_s; m - 1; n - \abs{A})$ and by Theorem \ref{theorem: F_v(a_1, ..., a_s; m) = m + p}, $n - \abs{A} \geq m - 1 + p$. Thus, we proved
\begin{equation}
\label{equation: G in mH(a_1, ..., a_s; m - 1; n) Rightarrow alpha(G) leq n - m - p + 1}
G \in \mH(a_1, ..., a_s; m - 1; n) \Rightarrow \alpha(G) \leq n - m - p + 1.
\end{equation}

We will also need the following improvement of the lower bound in (\ref{equation: m + p + 2 leq F_v(a_1, ..., a_s; m - 1) leq m + 3p})
\begin{theorem}
\label{theorem: F_v(a_1, ..., a_s; m - 1) geq m + p + 3}
\cite{Nen02}
Let $a_1, ..., a_s$ be positive integers, let $m$ and $p$ be defined by the equalities (\ref{equation: m and p}), $p \geq 3$ and $m \geq p + 2$. If $F_v(2, 2, p; p + 1) \geq 2p + 5$, then
\begin{equation*}
F_v(a_1, ..., a_s; m - 1) \geq m + p + 3.
\end{equation*}
\end{theorem}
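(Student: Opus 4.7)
My plan is to proceed by induction on $m$ with $p$ fixed, taking $m = p + 2$ as the base case.

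For the base case, the canonical lists $(a_1, \ldots, a_s)$ satisfying $\max a_i = p$ and $\sum_{i}(a_i - 1) + 1 = p + 2$ are exactly $(2, 2, p)$ and $(3, p)$, as a short count on $s$ shows. For $(2, 2, p)$ the desired conclusion $F_v(2, 2, p; p + 1) \geq 2p + 5 = m + p + 3$ is precisely the hypothesis of the theorem. For $(3, p)$, one application of (\ref{equation: G arrowsv (a_1, ..., a_s) Rightarrow G arrowsv (a_1, ..., a_(i - 1), t, a_i - t, a_(i + 1), ..., a_s)}) with $t = 2$ gives $G \arrowsv (3, p) \Rightarrow G \arrowsv (2, 2, p)$, whence $\mH(3, p; p + 1) \subseteq \mH(2, 2, p; p + 1)$ and the required lower bound transfers.

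For the inductive step, assume the bound holds for all $m$-values in $\{p + 2, \ldots, m - 1\}$ (with the same $p$), and let $G \in \mH(a_1, \ldots, a_s; m - 1)$ realise $F_v(a_1, \ldots, a_s; m - 1)$ with $m \geq p + 3$. Suppose for contradiction that $n := \abs{\V(G)} \leq m + p + 2$; then by (\ref{equation: G in mH(a_1, ..., a_s; m - 1; n) Rightarrow alpha(G) leq n - m - p + 1}) we have $\alpha(G) \leq 3$. The aim is to exhibit an independent set $A \subseteq \V(G)$ with $1 \leq |A| \leq 3$ such that $\omega(G - A) \leq m - 2 - |A|$. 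Given such $A$, repeated application of Proposition~\ref{proposition: G - A arrowsv (a_1, ..., a_(i - 1), a_i - 1, a_(i + 1_, ..., a_s)} (one vertex at a time, each time decrementing or dropping a smallest coordinate so that the maximum remains $p$) gives $G - A \arrowsv (b_1, \ldots, b_{s'})$ with $\max b_i = p$ and $\sum_i (b_i - 1) + 1 = m - |A|$. The condition $\omega(G - A) \leq m - 2 - |A|$ then places $G - A$ in $\mH(b_1, \ldots, b_{s'}; m - |A| - 1)$, and invoking the inductive hypothesis (or the base case when $m - |A| = p + 2$) yields $\abs{\V(G - A)} \geq (m - |A|) + p + 3$, whence $n \geq m + p + 3$, contradicting $n \leq m + p + 2$.

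The easy subcases are $\omega(G) \leq m - 3$ (any single vertex works) and $\omega(G) = m - 2$ with some vertex $v$ common to all $(m - 2)$-cliques (take $A = \{v\}$). The main obstacle is the remaining configuration in which $\omega(G) = m - 2$ but no vertex is a transversal of all $(m - 2)$-cliques; here one must construct an independent set $A$ of size $2$ or $3$ whose removal drops $\omega$ by exactly $|A|$, and one must also arrange that $m - |A| \geq p + 2$ so that the inductive hypothesis is available (for $m = p + 3$ only $|A| = 1$ is admissible, so this boundary value may require separate handling). The structural tools available are $\alpha(G) \leq 3$, the inequality $\abs{\V(G) \setminus Q} \leq p + 4$ for every $(m - 2)$-clique $Q$, and the observation that each $v \notin Q$ satisfies $|\N(v) \cap Q| \leq m - 3$ (otherwise $\{v\} \cup (\N(v) \cap Q)$ would be an $(m - 1)$-clique) and hence has a non-neighbour in $Q$; combining these with a case analysis on the intersection pattern of the $(m - 2)$-cliques should force the existence of the required $A$ and complete the induction.
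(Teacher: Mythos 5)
The paper does not actually prove this theorem -- it is quoted from \cite{Nen02} -- so I can only assess your argument on its own terms, and it has a fatal structural flaw rather than just an unfinished case. Your inductive step requires an independent set $A$ with $\omega(G - A) \leq m - 2 - \abs{A}$, so that $G - A$ lands in $\mH(b_1, \ldots, b_{s'}; m - \abs{A} - 1)$. But an independent set meets every clique in at most one vertex, so for any maximum clique $Q$ of $G$ the set $Q \setminus A$ is a clique of $G - A$ of size at least $\omega(G) - 1$; hence removing an independent set can lower the clique number by at most $1$, no matter how large $A$ is. In the very configuration you single out as ``the main obstacle'' -- $\omega(G) = m - 2$ with no vertex common to all $(m-2)$-cliques -- you are forced to take $\abs{A} \in \{2, 3\}$, and then $\omega(G - A) \geq m - 3 > m - 2 - \abs{A}$ always. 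No case analysis on intersection patterns of the $(m-2)$-cliques can produce the required $A$, because it does not exist. (Your base case and the two ``easy subcases'' with $\abs{A} = 1$ are fine.)

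To repair this one has to give up on forcing $\omega$ down by $\abs{A}$ and instead exploit that the clique bound need not drop at all: removing a maximum independent set $A$ gives $G - A \arrowsv (a_1 - 1, a_2, \ldots, a_s)$ with $m$-value $m - 1$ while still $\omega(G - A) \leq m - 2$, so $G - A$ lies in a class of the form $\mH(b_1, \ldots, b_{s'}; m')$ with $m'$ equal to the new $m$-value, where Theorem \ref{theorem: F_v(a_1, ..., a_s; m) = m + p} applies and yields $n \geq m + p - 1 + \alpha(G)$. Combined with $\alpha(G) \leq n - m - p + 1$ this pins down the near-extremal situations, and the uniqueness statement in Theorem \ref{theorem: F_v(a_1, ..., a_s; m) = m + p}(b) (the quotient must be $K_{m - p - 2} + \overline{C}_{2p+1}$), together with the hypothesis $F_v(2,2,p;p+1) \geq 2p + 5$ and the chromatic bound $\chi(G) \geq m$, is what rules them out. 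That is a genuinely different mechanism from the one you propose, and your write-up as it stands does not prove the theorem.
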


\section{Algorithms}

In this section we present algorithms for finding all graphs in $\mH_{max}(a_1, ..., a_s; q; n)$ with the help of a computer. The remaining graphs in $\mH(a_1, ..., a_s; q; n)$ can be obtained by removing edges from the maximal graphs. The algorithms are modifications of the algorithm from \cite{BN16}. However, we will present them in detail since they are essential to this paper. The idea for these algorithms comes from \cite{PRU99} (see Algorithm A1). Similar algorithms are used in \cite{CR06}, \cite{XLS10}, \cite{LR11}, \cite{SLPX12}, \cite{BN15a} and \cite{BN15b}. Also with the help of a computer, results for Folkman numbers are obtained in \cite{JR95}, \cite{SXP09}, \cite{SXL09} and \cite{DLSX13}. Let us also note the important role of the \emph{nauty} programs \cite{MP13} in this work. We use them for fast generation of non-isomorphic graphs and graph isomorph rejection.

Let $2 \leq a_1 \leq ... \leq a_s = p$ be positive integers, $m = \sum\limits_{i=1}^s (a_i - 1) + 1$, and let $G \in \mH(a_1, ..., a_s; m - 1; n), n \geq m - 1$. It is clear that $\alpha(G) \geq 2$, and according to (\ref{equation: G in mH(a_1, ..., a_s; m - 1; n) Rightarrow alpha(G) leq n - m - p + 1}), $\alpha(G) \leq n - m - p + 1$. As we will see further in the proofs of the results of this paper, it is computationally most difficult to obtain the graphs $G$ for which $\alpha(G) = 2$. Therefore, first we present in detail the Algorithm \ref{algorithm: mH_(max)(a_1, ..., a_s; q; n), alpha(G) = 2} for finding all graphs $G \in \mH_{max}(a_1, ..., a_s; m - 1; n)$ with $\alpha(G) = 2$, even though it is a special case of the more general Algorithm \ref{algorithm: mH_(max)(a_1, ..., a_s; q; n)}. Algorithm \ref{algorithm: mH_(max)(a_1, ..., a_s; q; n), alpha(G) = 2} is based on the following propositions, which are easy to prove:

\begin{proposition}
\label{proposition: H is a (+K_(q - 1))-graph}
Let $G \in \mH_{max}(a_1, ..., a_s; q; n)$, $A$ be an independent set of vertices of $G$ and $H = G - A$. Then $H \in \mH_{+K_{q - 1}}(a_1 - 1, ..., a_s; q; n - \abs{A})$.
\end{proposition}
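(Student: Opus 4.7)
The plan is to unpack the three conditions that go into the conclusion $H \in \mH_{+K_{q-1}}(a_1-1,\dots,a_s;q;n-|A|)$ and verify each. These are: (i) the vertex count $|\V(H)|=n-|A|$; (ii) membership in $\mH(a_1-1,a_2,\dots,a_s;q)$, i.e. $H\arrowsv(a_1-1,a_2,\dots,a_s)$ together with $\omega(H)<q$; and (iii) the $(+K_{q-1})$ property from Definition~\ref{definition: (+K_t)}. Item (i) is immediate since $H=G-A$.

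For item (ii), the clique condition is trivial because $H$ is an induced subgraph of $G$, so $\omega(H)\le\omega(G)<q$. The arrow condition is exactly what Proposition~\ref{proposition: G - A arrowsv (a_1, ..., a_(i - 1), a_i - 1, a_(i + 1_, ..., a_s)} delivers (applied with $i=1$ and the given independent set $A$), so no additional work is needed.

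Item (iii) is the real content. Let $e=[u,v]\in\E(\overline{H})$. Because $H$ is induced in $G$ and $u,v\notin A$, the same edge lies in $\E(\overline{G})$. Since $G$ is maximal in $\mH(a_1,\dots,a_s;q)$, adding $e$ creates a new $q$-clique $Q$ in $G+e$; as the only new edge is $e$, we must have $\{u,v\}\subseteq Q$. The key observation is that $A$ is independent in $G$ (and remains so in $G+e$, as $e$ has both endpoints outside $A$), so $|Q\cap A|\le 1$, which gives $|Q\cap\V(H)|\ge q-1$. Choose any subset $Q'\subseteq Q\cap\V(H)$ of size $q-1$ that contains both $u$ and $v$. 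Then $Q'$ is a $(q-1)$-clique of $H+e$ that uses the edge $e$, hence $Q'$ is not a clique of $H$. This shows that every edge of $\overline{H}$ creates a new $(q-1)$-clique, which is precisely the $(+K_{q-1})$ property.

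There is no genuine obstacle here; the only subtlety worth flagging is the counting step $|Q\cap A|\le 1$, which is exactly where independence of $A$ is used to pass from a newly created $q$-clique in $G+e$ to a newly created $(q-1)$-clique in $H+e$. Everything else reduces to quoting Proposition~\ref{proposition: G - A arrowsv (a_1, ..., a_(i - 1), a_i - 1, a_(i + 1_, ..., a_s)} and the fact that induced subgraphs have no larger clique number.
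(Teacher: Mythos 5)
Your proof is correct and follows the same route as the paper: membership in $\mH(a_1-1,\dots,a_s;q;n-\abs{A})$ via Proposition \ref{proposition: G - A arrowsv (a_1, ..., a_(i - 1), a_i - 1, a_(i + 1_, ..., a_s)}, and the $(+K_{q-1})$ property from the maximality of $G$. The only difference is that the paper states the second step in one sentence, whereas you supply the underlying counting argument ($\abs{Q\cap A}\le 1$ because $A$ is independent), which is exactly the justification the paper leaves implicit.
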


\begin{proof}
According to Proposition \ref{proposition: G - A arrowsv (a_1, ..., a_(i - 1), a_i - 1, a_(i + 1_, ..., a_s)}, $G - A \in \mH(a_1 - 1, ..., a_s; q; n - \abs{A})$. From the maximality of $G$ it follows that $G - A$ is a $(+K_{q - 1})$-graph.
\end{proof}

\begin{proposition}
\label{proposition: maximal K_q-free graph G conditions}
Let $G$ be a maximal $K_q$-free graph and $v_1, v_2$ be non-adjacent vertices of $G$. Then
	
$K_{q - 2} \subseteq N_G(v_1) \cap N_G(v_2)$.
\end{proposition}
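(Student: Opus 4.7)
The plan is to argue directly from the definition of maximality and the fact that the new clique produced by adding the edge $v_1v_2$ must contain that edge.

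First, I would observe that since $G$ is $K_q$-free but maximal with this property, adding any missing edge creates a copy of $K_q$. In particular, because $v_1$ and $v_2$ are non-adjacent, the graph $G + v_1v_2$ contains a $K_q$. Since $G$ itself has no $K_q$, any such $K_q$ in $G+v_1v_2$ must use the newly added edge $v_1v_2$, so it contains both $v_1$ and $v_2$.

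Next I would let $K$ denote the vertex set of this $K_q$ and set $W = K \setminus \{v_1,v_2\}$. Then $\abs{W} = q - 2$, every vertex of $W$ is adjacent in $G + v_1v_2$ (and hence in $G$, since neither endpoint of the added edge lies in $W$) to both $v_1$ and $v_2$, and the vertices of $W$ are pairwise adjacent in $G$. Thus $W$ induces a $K_{q-2}$ in $G$ whose vertices all belong to $N_G(v_1)\cap N_G(v_2)$, which gives $K_{q-2}\subseteq N_G(v_1)\cap N_G(v_2)$ as required.

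There is no serious obstacle here — the proof is just a careful unpacking of the definitions of maximal $K_q$-free graph and of clique, with the key observation that the new $K_q$ is forced to contain the added edge and therefore contributes $q-2$ common neighbors that are pairwise adjacent.
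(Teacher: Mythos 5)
Your proof is correct and is precisely the standard argument: the paper leaves this proposition unproved (it is listed among the statements that are ``easy to prove''), and the intended justification is exactly your observation that the $K_q$ created by adding the edge $v_1v_2$ must contain that edge, so its remaining $q-2$ vertices form a clique lying in $N_G(v_1)\cap N_G(v_2)$. No gaps.
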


\begin{proposition}
\label{proposition: alpha(G) = 2 conditions}
Let $G$ be a graph,  $v_1, v_2$ be non-adjacent vertices of $G$ and $H = G - \set{v_1, v_2}$. Then $\alpha(G) = 2$ if and only if the following three conditions are satisfied:

(a) $\alpha(H) \leq 2$.

(b) $\alpha(H - N_G(v_j)) \leq 1, \ j = 1, 2$, i.e. either $N_G(v_j) = \V(H)$ or $H - N_G(v_j)$ is a clique.
	
(c) $N_G(v_1) \cup N_G(v_2) = \V(H)$.
\end{proposition}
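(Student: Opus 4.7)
The statement is an elementary equivalence, so the plan is just to verify both implications by a case analysis on how a hypothetical independent set of size at least three could be distributed among $\set{v_1, v_2}$ and $\V(H)$. Since $v_1, v_2$ are non-adjacent, we already know $\alpha(G) \geq 2$, so $\alpha(G) = 2$ is equivalent to the non-existence of any independent set of size $3$ in $G$.

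For the forward direction, assume $\alpha(G) = 2$. Condition (a) is immediate because any independent set of $H$ is also independent in $G$. For (b), if $H - \N_G(v_j)$ contained two non-adjacent vertices $u_1, u_2$, then neither is adjacent to $v_j$, so $\set{v_j, u_1, u_2}$ would be an independent triple in $G$. For (c), any vertex $u \in \V(H) \setminus (\N_G(v_1) \cup \N_G(v_2))$ is non-adjacent to both $v_1$ and $v_2$, and together with the pair $v_1, v_2$ produces an independent triple.

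For the converse, suppose (a), (b), (c) hold and let $A \subseteq \V(G)$ be independent with $\abs{A} \geq 3$; I would split on $\abs{A \cap \set{v_1, v_2}}$. If $A \cap \set{v_1, v_2} = \emptyset$, then $A \subseteq \V(H)$ contradicts (a). If $\abs{A \cap \set{v_1, v_2}} = 1$, say $v_j \in A$, then $A \setminus \set{v_j}$ is an independent set of size $\geq 2$ contained in $\V(H) \setminus \N_G(v_j)$, contradicting (b). If $\set{v_1, v_2} \subseteq A$, then $A \setminus \set{v_1, v_2}$ is a nonempty subset of $\V(H) \setminus (\N_G(v_1) \cup \N_G(v_2))$, contradicting (c). Hence no such $A$ exists and $\alpha(G) = 2$.

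There is no real obstacle; the only thing to be careful about is covering all three cases of how an independent triple can meet $\set{v_1, v_2}$ and matching each case precisely with one of (a), (b), (c). The proof is purely mechanical once that correspondence is made explicit.
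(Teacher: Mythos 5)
Your proof is correct and matches the paper's approach: the paper states this proposition without a direct proof but establishes the more general Proposition \ref{proposition: alpha(G) leq t Leftrightarrow alpha(H - bigcup_(v in A') N_G((v)) leq t - abs(A')}, whose argument is exactly your decomposition of a hypothetical independent set by its intersection with $\set{v_1, v_2}$, with conditions (a), (b), (c) corresponding to the subsets $A' = \emptyset$, $\set{v_j}$, and $\set{v_1, v_2}$. Nothing is missing; you correctly note that $\alpha(G) \geq 2$ is automatic from the non-adjacency of $v_1$ and $v_2$.
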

Further, we will prove the more general Proposition \ref{proposition: alpha(G) leq t Leftrightarrow alpha(H - bigcup_(v in A') N_G((v)) leq t - abs(A')}.

Now, we formulate the first of the two important algorithms in this paper which finds all graphs $G \in \mH_{max}(a_1, ..., a_s; q; n)$ with $\alpha(G) = 2$.

\begin{algorithm}
	\label{algorithm: mH_(max)(a_1, ..., a_s; q; n), alpha(G) = 2} The input of the algorithm is the set $\mathcal{A}$ of all graphs in $\mH_{max}(a_1 - 1, ..., a_s; q; n - 2)$ with independence number not greater than 2. The output of the algorithm is the set $\mathcal{B}$ of all graphs $G \in \mH_{max}(a_1, ..., a_s; q; n)$ with $\alpha(G) = 2$.
	
	\emph{1.} By removing edges from the graphs in $\mathcal{A}$ obtain the set
	
	$\mathcal{A}' = \set{H \in \mH_{+K_{q - 1}}(a_1 - 1, ..., a_s; q; n - 2) : \alpha(H) \leq 2}$.
	
	\emph{2.} For each graph $H \in \mathcal{A}'$:
	
	\emph{2.1.} Find the family $\mathcal{M}(H) = \set{M_1, ..., M_t}$ of all maximal $K_{q - 1}$-free subsets of $\V(H)$.
	
	\emph{2.2.} Find all pairs $N = \set{M_{i_1}, M_{i_2}}$ of elements of $\mathcal{M}(H)$ (it is possible that $M_{i_1} = M_{i_2}$), which fulfill the conditions:
	
	(a) $K_{q - 2} \subseteq M_{i_1} \cap M_{i_2}$.
	
	(b) $\alpha(H - M_{i_j}) \leq 1, \ j = 1, 2$.

	(c) $M_{i_1} \cup M_{i_2} = \V(H)$.
	
	\emph{2.3.} For each pair $N = \set{M_{i_1}, M_{i_2}}$ of elements of $\mathcal{M}(H)$ found in step 2.2 construct the graph $G = G(N)$ by adding new non-adjacent vertices $v_1, v_2$ to $\V(H)$ such that $N_G(v_j) = M_{i_j}, j = 1, 2$. If $\omega(G + e) = q, \forall e \in \E(\overline{G})$, then add $G$ to $\mathcal{B}$.
	
	\emph{3.} Remove the isomorph copies of graphs from $\mathcal{B}$.
	
	\emph{4.} Remove from the obtained in step 3 set $\mathcal{B}$ all graphs $G$ for which $G \not\arrowsv (a_1, ..., a_s)$.
\end{algorithm}

\begin{theorem}
	\label{theorem: algorithm mH_(max)(a_1, ..., a_s; q; n), alpha(G) = 2}
	\cite{BN16}
	After the execution of Algorithm \ref{algorithm: mH_(max)(a_1, ..., a_s; q; n), alpha(G) = 2}, the obtained set $\mathcal{B}$ coincides with the set of all graphs $G \in \mH_{max}(a_1, ..., a_s; q; n)$ with $\alpha(G) = 2$.
\end{theorem}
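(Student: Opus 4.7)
The plan is to prove correctness by establishing the two set inclusions separately. In the completeness direction (every $G \in \mH_{max}(a_1,\ldots,a_s;q;n)$ with $\alpha(G)=2$ lies in $\mathcal{B}$), I will trace how the structural data of $G$ — a residual graph after deletion of an independent pair, together with two neighborhoods — fall inside the search space enumerated by the algorithm. In the soundness direction (every $G\in\mathcal{B}$ has the stated properties), I will verify that each explicit check in steps 2.3 and 4, combined with Propositions \ref{proposition: H is a (+K_(q - 1))-graph}--\ref{proposition: alpha(G) = 2 conditions}, rules out exactly one way the output could fail.

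For completeness, I fix an arbitrary $G \in \mH_{max}(a_1,\ldots,a_s;q;n)$ with $\alpha(G) = 2$, pick any independent pair $\{v_1,v_2\} \subseteq \V(G)$, and set $H = G - \{v_1,v_2\}$. Proposition \ref{proposition: H is a (+K_(q - 1))-graph} places $H$ in $\mH_{+K_{q-1}}(a_1-1,\ldots,a_s;q;n-2)$; since $\alpha(H) \leq \alpha(G) = 2$, we have $H \in \mathcal{A}'$, so step 1 reaches $H$. Next I will verify that $M_{i_j} := \N_G(v_j)$ is a maximal $K_{q-1}$-free subset of $\V(H)$: it is $K_{q-1}$-free because $G$ is $K_q$-free, and any attempted enlargement corresponds to adding an edge $v_j w$ to $G$, which by maximality of $G$ produces a new $K_q$ necessarily containing $v_j$ (since $H$ is $K_q$-free), placing a $K_{q-1}$ inside $\N_G(v_j) \cup \{w\}$. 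The pair $\{M_{i_1}, M_{i_2}\}$ then satisfies condition (a) of step 2.2 by Proposition \ref{proposition: maximal K_q-free graph G conditions}, and conditions (b), (c) by Proposition \ref{proposition: alpha(G) = 2 conditions}. Step 2.3 reconstructs a graph isomorphic to $G$; the maximality check $\omega(G + e) = q$ passes because $G$ itself is maximal $K_q$-free, and step 4 passes by hypothesis, so $G$ survives into $\mathcal{B}$ after the isomorph rejection in step 3.

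For soundness, I trace through the construction. Any $G = G(N)$ produced in step 2.3 is $K_q$-free: a $K_q$ in $G$ cannot lie in $H$ (which is $K_q$-free) and cannot use both $v_1,v_2$ (they are non-adjacent), so it would lie inside $\{v_j\} \cup M_{i_j}$, contradicting the fact that $M_{i_j}$ is $K_{q-1}$-free. Combined with the explicit test $\omega(G + e) = q$ for all $e \in \E(\overline{G})$, this gives $G \in \mH_{max}(\cdots;q;n)$ as a maximal $K_q$-free graph. The non-adjacency of $v_1, v_2$ yields $\alpha(G) \geq 2$; combined with conditions (a), (b), (c) of step 2.2 and Proposition \ref{proposition: alpha(G) = 2 conditions}, this upgrades to $\alpha(G) = 2$. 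Step 4 finally enforces the arrow relation $G \arrowsv (a_1,\ldots,a_s)$.

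The main obstacle will be step 1, where I must justify that every $H \in \mH_{+K_{q-1}}(a_1-1,\ldots,a_s;q;n-2)$ with $\alpha(H) \leq 2$ is indeed obtainable by deleting edges from some element of $\mathcal{A}$. The argument is that any such $H$ admits a maximal $K_q$-free supergraph $H^* \supseteq H$ on the same vertex set, obtained by greedy edge addition while staying $K_q$-free; monotonicity of $\arrowsv$ under edge addition preserves the arrow relation, and $\alpha(H^*) \leq \alpha(H) \leq 2$, so $H^* \in \mathcal{A}$ and $H$ is recovered by removing the added edges. A second small point to handle is that the same $G$ may arise from several independent pairs $\{v_1,v_2\}$ and several starting $H$'s, but this only produces duplicates, which are cleaned up in step 3.
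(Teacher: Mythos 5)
Your proposal is correct and follows essentially the same two-inclusion argument as the paper's proof, invoking Propositions \ref{proposition: H is a (+K_(q - 1))-graph}, \ref{proposition: maximal K_q-free graph G conditions} and \ref{proposition: alpha(G) = 2 conditions} in the same roles. The only additions are details the paper leaves implicit --- the explicit verification that $\N_G(v_j)$ is a maximal $K_{q-1}$-free subset of $\V(H)$, and the justification that step 1 reaches all of $\mathcal{A}'$ via maximal $K_q$-free supergraphs --- both of which are sound.
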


\begin{proof}
Suppose that after the execution of Algorithm \ref{algorithm: mH_(max)(a_1, ..., a_s; q; n), alpha(G) = 2} the graph $G \in \mathcal{B}$. Then, $G = G(N)$ where $N$ and the following notations are the same as in step 2.3. Since $H = G - \set{v_1, v_2} \in \mathcal{A'}$, we have $\omega(H) < q$. Since $N_G(v_1)$ and $N_G(v_2)$ are $K_{q - 1}$-free sets, it follows that $\omega(G) < q$. The check at the end of step 2.3 guarantees that $G$ is a maximal $K_q$-free graph and the check in step 4 guarantees that $G \arrowsv (a_1, ..., a_s)$, therefore $G \in \mH_{max}(a_1, ..., a_s; q; n)$. Again, by $H \in \mathcal{A'}$, we have $\alpha(H) \leq 2$ and from the conditions (b) and (c) in step 2.2 and Proposition \ref{proposition: alpha(G) = 2 conditions} it follows that $\alpha(G) = 2$.

Let $G \in \mH_{max}(a_1, ..., a_s; q; n)$ and $\alpha(G) = 2$. We will prove that, after the execution of Algorithm \ref{algorithm: mH_(max)(a_1, ..., a_s; q; n), alpha(G) = 2}, $G \in \mathcal{B}$. Let $v_1, v_2$ be non-adjacent vertices in $G$ and $H = G - \set{v_1, v_2}$. Then $\alpha(H) \leq 2$ and according to Proposition \ref{proposition: H is a (+K_(q - 1))-graph}, $H \in \mathcal{A}'$. Since $G$ is a maximal $K_q$-free graph, $N_G(v_1)$ and $N_G(v_2)$ are maximal $K_{q - 1}$-free subsets of $V(H)$, and therefore $N_G(v_i) \in \mathcal{M}(H), i = 1, 2$ (see step 2.1). Let $N = \set{N_G(v_1), N_G(v_2)}$. By Proposition \ref{proposition: maximal K_q-free graph G conditions}, $N$ fulfills the condition (a), and by Proposition \ref{proposition: alpha(G) = 2 conditions}, $N$ also fulfills (b) and (c). Thus, we showed that $N$ fulfills all conditions in step 2.2, and since $G = G(N)$ is a maximal $K_q$-free graph, in step 2.3 $G$ is added to $\mathcal{B}$. Clearly, after step 4 the graph $G$ remains in $\mathcal{B}$.
\end{proof}

We shall now generalize Algorithm \ref{algorithm: mH_(max)(a_1, ..., a_s; q; n), alpha(G) = 2} to find all graphs $G \in \mH_{max}(a_1, ..., a_s; m - 1; n)$ for which $r \leq \alpha(G) \leq t$. We will need the following proposition, which is a generalization of Proposition \ref{proposition: alpha(G) = 2 conditions} (in the special case $t = 2$, when $G$ is not a complete graph Proposition \ref{proposition: alpha(G) leq t Leftrightarrow alpha(H - bigcup_(v in A') N_G((v)) leq t - abs(A')} coincides with Proposition \ref{proposition: alpha(G) = 2 conditions}). 

\begin{proposition}
	\label{proposition: alpha(G) leq t Leftrightarrow alpha(H - bigcup_(v in A') N_G((v)) leq t - abs(A')}
	Let $A$ be an independent set of vertices of $G$ and $H = G - A$. Then, 
	
	$\alpha(G) \leq t \Leftrightarrow \alpha(H - \bigcup_{v \in A'} N_G(v)) \leq t - \abs{A'}, \ \forall A' \subseteq A$.
\end{proposition}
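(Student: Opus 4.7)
The plan is to prove both directions of the equivalence by exhibiting an explicit bijection between independent sets in $G$ and independent sets in the truncated subgraphs $H - \bigcup_{v \in A'} N_G(v)$ obtained for various $A' \subseteq A$.

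For the forward direction ($\Rightarrow$), I would start with an arbitrary $A' \subseteq A$ and let $I$ be a maximum independent set in $H - \bigcup_{v \in A'} N_G(v)$. The key observation is that $I \cup A'$ is an independent set in $G$: the set $A'$ is independent because it is a subset of the independent set $A$, the set $I$ is independent because it sits in a subgraph of $H$, the two sets are disjoint since $I \subseteq \V(H) = \V(G) \setminus A$, and no vertex of $I$ is adjacent to any vertex of $A'$ precisely because $I$ avoids $\bigcup_{v \in A'} N_G(v)$. Therefore $|I| + |A'| \leq \alpha(G) \leq t$, yielding the desired bound $\alpha(H - \bigcup_{v \in A'} N_G(v)) = |I| \leq t - |A'|$.

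For the backward direction ($\Leftarrow$), I would take a maximum independent set $I$ of $G$ and decompose it as $I = (I \cap A) \cup (I \cap \V(H))$. Setting $A' := I \cap A \subseteq A$, note that $A'$ is trivially independent. The remaining part $I \cap \V(H)$ is an independent subset of $\V(H)$, and since $I$ is independent in $G$, no vertex of $I \cap \V(H)$ can lie in $N_G(v)$ for any $v \in A'$. Hence $I \cap \V(H)$ is an independent set of $H - \bigcup_{v \in A'} N_G(v)$, whose size is at most $t - |A'|$ by hypothesis applied to this particular $A'$. Adding back $|A'|$ gives $\alpha(G) = |I| \leq t$.

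There is no serious obstacle: the statement is essentially the observation that an independent set of $G$ splits uniquely into its part in $A$ and its part in $\V(H)$, with the non-adjacency constraint between the two parts encoded exactly by avoiding the neighborhoods. The only points requiring a bit of care are (i) remembering that $A' = \emptyset$ is included in the quantifier, which handles the case when an independent set of $G$ is entirely contained in $\V(H)$, and (ii) noting that the equivalence uses the quantifier over \emph{all} $A' \subseteq A$ in the backward direction—a single choice of $A'$ does not suffice, since one must be able to select $A'$ to match the intersection $I \cap A$ of the adversarial independent set.
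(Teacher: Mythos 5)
Your proof is correct and follows essentially the same route as the paper's: the forward direction rests on the observation that $A' \cup I$ is an independent set of $G$ (the paper phrases this as a contradiction, you phrase it directly), and the backward direction uses the identical decomposition of a maximum independent set of $G$ into its part inside $A$ and its part inside $\V(H)$ avoiding the relevant neighborhoods. No gaps; your closing remarks about the role of the quantifier over all $A' \subseteq A$ are accurate but not needed beyond what the argument already shows.
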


\begin{proof}
	Let $\alpha(G) \leq t$. Suppose that for some $A' \subseteq A$ we have $\alpha(H - \bigcup_{v \in A'} N_G(v)) > t - \abs{A'}$. Consequently, there exists an independent set $A''$ of vertices of $H - \bigcup_{v \in A'} N_G(v)$ such that $\abs{A''} > t - \abs{A'}$. We obtained that the independent set $A' \cup A''$ has more than $t$ vertices, which is a contradiction.
	
	Now, let $\alpha(H - \bigcup_{v \in A'} N_G(v)) \leq t - \abs{A'}, \ \forall A' \subseteq A$. Let $\widetilde{A}$ be an independent set of vertices of $G$ and $\abs{\widetilde{A}} = \alpha(G)$. Then, $\widetilde{A} = A_1 \cup A_2$ where $A_1 \subseteq A$ and $A_2$ is an independent set in $H - \bigcup_{v \in A_1} N_G(v)$. Since $\abs{A_2} \leq \alpha(H - \bigcup_{v \in A_1} N_G(v)) \leq t - \abs{A_1}$, we obtain $\alpha(G) = \abs{\widetilde{A}} = \abs{A_1} + \abs{A_2} \leq t$. 
\end{proof}

Now we move on to the formulation of the second important Algorithm \ref{algorithm: mH_(max)(a_1, ..., a_s; q; n)}, which is a generalization of Algorithm \ref{algorithm: mH_(max)(a_1, ..., a_s; q; n), alpha(G) = 2} and finds all graphs $G \in \mH_{max}(a_1, ..., a_s; q; n)$ with $r \leq \alpha(G) \leq t$.

\begin{algorithm}
	\label{algorithm: mH_(max)(a_1, ..., a_s; q; n)} The input of the algorithm is the set $\mathcal{A}$ of all graphs in $\mH_{max}(a_1 - 1, ..., a_s; q; n - r)$ with independence number not greater than t. The output of the algorithm is a set $\mathcal{B}$ of all graphs $G \in \mH_{max}(a_1, ..., a_s; q; n)$ with $r \leq \alpha(G) \leq t$.
	
	\emph{1.} By removing edges from the graphs in $\mathcal{A}$ obtain the set
	
	$\mathcal{A}' = \set{H \in \mH_{+K_{q - 1}}(a_1 - 1, ..., a_s; q; n - r) : \alpha(H) \leq t}$.
	
	\emph{2.} For each graph $H \in \mathcal{A}'$:
	
	\emph{2.1.} Find the family $\mathcal{M}(H) = \set{M_1, ..., M_l}$ of all maximal $K_{q - 1}$-free subsets of $\V(H)$.
	
	\emph{2.2.} Find all $r$-element multisets $N = \set{M_{i_1}, M_{i_2}, ..., M_{i_r}}$ of elements of $\mathcal{M}(H)$, which fulfill the conditions:
	
	(a) $K_{q - 2} \subseteq M_{i_j} \cap M_{i_k}$ for every $M_{i_j}, M_{i_k} \in N$.
	
	(b) $\alpha(H - \bigcup_{M_{i_j} \in N'} M_{i_j}) \leq t - \abs{N'}$ for subtuple $N'$ of $N$.
	
	\emph{2.3.} For each $r$-element multiset $N = \set{M_{i_1}, M_{i_2}, ..., M_{i_r}}$ of elements of $\mathcal{M}(H)$ found in step 2.2 construct the graph $G = G(N)$ by adding new independent vertices $v_1, v_2, ..., v_r$ to $\V(H)$ such that $N_G(v_j) = M_{i_j}, j = 1, ..., r$. If $\omega(G + e) = q, \forall e \in \E(\overline{G})$, then add $G$ to $\mathcal{B}$.
	
	\emph{3.} Remove the isomorph copies of graphs from $\mathcal{B}$.

	\emph{4.} Remove from the obtained in step 3 set $\mathcal{B}$ all graphs $G$ for which $G \not\arrowsv (a_1, ..., a_s)$.
		
\end{algorithm}

\begin{theorem}
	\label{theorem: algorithm mH_(max)(a_1, ..., a_s; q; n)}
	\cite{BN16}
	After the execution of Algorithm \ref{algorithm: mH_(max)(a_1, ..., a_s; q; n), alpha(G) = 2}, the obtained set $\mathcal{B}$ coincides with the set of all graphs $G \in \mH_{max}(a_1, ..., a_s; q; n)$ with $r \leq \alpha(G) \leq t$.
\end{theorem}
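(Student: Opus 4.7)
The plan is to prove the two inclusions between $\mathcal{B}$ (as returned by the algorithm) and the target set $\mH_{max}(a_1,\dots,a_s;q;n)\cap\{G:r\le\alpha(G)\le t\}$, adapting the reasoning already used for Theorem \ref{theorem: algorithm mH_(max)(a_1, ..., a_s; q; n), alpha(G) = 2} but invoking the generalized Proposition \ref{proposition: alpha(G) leq t Leftrightarrow alpha(H - bigcup_(v in A') N_G((v)) leq t - abs(A')} in place of Proposition \ref{proposition: alpha(G) = 2 conditions}.

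For the forward direction, I would take an arbitrary $G=G(N)\in\mathcal{B}$ produced in step 2.3, with $N=\{M_{i_1},\dots,M_{i_r}\}$ and $H=G-\{v_1,\dots,v_r\}\in\mathcal{A}'$. Since $H\in\mH_{+K_{q-1}}(a_1-1,\dots,a_s;q;n-r)$ we have $\omega(H)<q$, and since each $N_G(v_j)=M_{i_j}$ is $K_{q-1}$-free, no $q$-clique uses any $v_j$, so $\omega(G)<q$. The explicit check at the end of step 2.3 gives that $G$ is maximal $K_q$-free, and step 4 retains only graphs with $G\arrowsv(a_1,\dots,a_s)$; hence $G\in\mH_{max}(a_1,\dots,a_s;q;n)$. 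The set $\{v_1,\dots,v_r\}$ is independent in $G$ by construction, so $\alpha(G)\ge r$. To obtain $\alpha(G)\le t$, I would apply Proposition \ref{proposition: alpha(G) leq t Leftrightarrow alpha(H - bigcup_(v in A') N_G((v)) leq t - abs(A')} with $A=\{v_1,\dots,v_r\}$: for every subset $A'\subseteq A$, the condition (b) in step 2.2 guarantees $\alpha(H-\bigcup_{v_j\in A'}N_G(v_j))\le t-\abs{A'}$, which is exactly the hypothesis of that proposition.

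For the reverse direction, I would start from any $G\in\mH_{max}(a_1,\dots,a_s;q;n)$ with $r\le\alpha(G)\le t$, pick an independent set $A=\{v_1,\dots,v_r\}$ of size $r$ inside $G$, and set $H=G-A$. By Proposition \ref{proposition: H is a (+K_(q - 1))-graph}, $H\in\mH_{+K_{q-1}}(a_1-1,\dots,a_s;q;n-r)$, and clearly $\alpha(H)\le\alpha(G)\le t$, so $H\in\mathcal{A}'$. Because $G$ is maximal $K_q$-free, each neighborhood $N_G(v_j)$ is a maximal $K_{q-1}$-free subset of $V(H)$, and so belongs to $\mathcal{M}(H)$. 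Setting $N=\{N_G(v_1),\dots,N_G(v_r)\}$ as a multiset, Proposition \ref{proposition: maximal K_q-free graph G conditions} applied to each pair yields condition (a) of step 2.2, and Proposition \ref{proposition: alpha(G) leq t Leftrightarrow alpha(H - bigcup_(v in A') N_G((v)) leq t - abs(A')} applied with $A$ yields condition (b). Hence this $N$ is enumerated, the corresponding $G(N)$ is isomorphic to $G$, step 2.3 adds it to $\mathcal{B}$, and steps 3 and 4 do not discard it since $G$ is maximal $K_q$-free and $G\arrowsv(a_1,\dots,a_s)$.

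The one subtlety that deserves care is that step 2.2 enumerates \emph{multisets}, not ordered tuples, and yet condition (b) is phrased in terms of ``subtuples $N'$ of $N$''. I would spell out that the two formulations agree because the inequality $\alpha(H-\bigcup_{M_{i_j}\in N'}M_{i_j})\le t-\abs{N'}$ depends only on the underlying set $\bigcup M_{i_j}$ and the cardinality $\abs{N'}$, so checking all sub-multisets is equivalent to checking all subsets $A'\subseteq A$ in Proposition \ref{proposition: alpha(G) leq t Leftrightarrow alpha(H - bigcup_(v in A') N_G((v)) leq t - abs(A')}. This bookkeeping is the main obstacle to a clean write-up; once it is fixed, the rest of the argument is a direct translation of the proof of Theorem \ref{theorem: algorithm mH_(max)(a_1, ..., a_s; q; n), alpha(G) = 2} from the case $r=t=2$ to arbitrary $r\le t$.
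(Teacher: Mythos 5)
Your proposal is correct and follows essentially the same route as the paper: the paper also argues by double inclusion, deferring the clique-number and maximality checks to the proof of Theorem \ref{theorem: algorithm mH_(max)(a_1, ..., a_s; q; n), alpha(G) = 2} and invoking Proposition \ref{proposition: H is a (+K_(q - 1))-graph} together with Proposition \ref{proposition: alpha(G) leq t Leftrightarrow alpha(H - bigcup_(v in A') N_G((v)) leq t - abs(A')} exactly as you do. Your explicit remark reconciling the multiset/subtuple phrasing of condition (b) in step 2.2 with the subsets $A'\subseteq A$ of that proposition is a worthwhile clarification that the paper leaves implicit.
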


\begin{proof}
Suppose that after the execution of Algorithm \ref{algorithm: mH_(max)(a_1, ..., a_s; q; n)}, $G \in \mathcal{B}$. In the same way as in the proof of Theorem \ref{theorem: algorithm mH_(max)(a_1, ..., a_s; q; n), alpha(G) = 2}, we prove that $G \in \mH_{max}(a_1, ..., a_s; q)$. From Proposition \ref{proposition: alpha(G) leq t Leftrightarrow alpha(H - bigcup_(v in A') N_G((v)) leq t - abs(A')} and the condition (b) in step 2.2 it follows that $\alpha(G) \leq t$, and step 2.3 guaranties that $\alpha(G) \geq r$.

Now let $G \in \mH_{max}(a_1, ..., a_s; q; n)$, $r \leq \alpha(G) \leq t$, let $A$ be an independent set of $r$ vertices of $G$ and $H = G - A$. Then obviously, $\alpha(H) \leq t$ and according to Proposition \ref{proposition: H is a (+K_(q - 1))-graph}, $H \in \mathcal{A}'$ (see step 1). By repeating the reasoning of the second part of the proof of Theorem \ref{theorem: algorithm mH_(max)(a_1, ..., a_s; q; n), alpha(G) = 2}, we prove that after the execution of Algorithm \ref{algorithm: mH_(max)(a_1, ..., a_s; q; n)}, $G \in \mathcal{B}$.
\end{proof}

At the end of this section, we will propose a method to improve Algorithm \ref{algorithm: mH_(max)(a_1, ..., a_s; q; n), alpha(G) = 2} and Algorithm \ref{algorithm: mH_(max)(a_1, ..., a_s; q; n)} which is based on the following proposition which is easy to prove:
\begin{proposition}
Let $G \in \mH(2, 2, p; p + 1)$ and $v \in \V(G)$. Then, all non-neighbors of $v$ induce a graph with chromatic number greater than 2. In particular, from $G \in \mH(2, 2, p; p + 1)$ it follows that $\Delta(G) \leq \abs{\V(G)} - 4$.
\end{proposition}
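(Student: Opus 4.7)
My plan is to prove the first assertion by contrapositive. Suppose $v \in \V(G)$ and the non-neighbors of $v$ induce a subgraph $H$ with $\chi(H) \leq 2$; I will produce a $3$-coloring of $\V(G)$ witnessing $G \not\arrowsv (2, 2, p)$, contradicting $G \in \mH(2, 2, p; p + 1)$.

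The natural coloring to try is the following: fix a proper $2$-coloring of $H$ using colors $1$ and $2$; assign color $3$ to every vertex of $\N_G(v)$; and assign color $1$ to $v$ itself. I would then check the three required conditions. Color classes $1$ and $2$ are independent in $G$ because the proper $2$-coloring of $H$ avoids monochromatic edges among the non-neighbors of $v$, and $v$ itself has no edge to any non-neighbor, so placing $v$ into color class $1$ creates no new edge within that class. For color class $3 = \N_G(v)$, any clique $K \subseteq \N_G(v)$ extends via $v$ to the strictly larger clique $K \cup \set{v}$ of $G$, so $\omega(G[\N_G(v)]) \leq \omega(G) - 1 \leq p - 1 < p$, and no monochromatic $K_p$ of color $3$ can occur. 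These three checks together contradict $G \arrowsv (2, 2, p)$, proving the first part.

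The second assertion follows immediately: if $\deg_G(v) = \Delta(G)$, then the subgraph $H$ induced by the non-neighbors of $v$ has $\abs{\V(G)} - 1 - \Delta(G)$ vertices, and since $\chi(H) \geq 3$ forces $H$ to have at least $3$ vertices, we obtain $\Delta(G) \leq \abs{\V(G)} - 4$.

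There is no serious obstacle in this argument; the one point that must be noticed is that the adjacency of $v$ to every vertex of $\N_G(v)$, together with $\omega(G) \leq p$, strictly drops the clique number of $G[\N_G(v)]$ below $p$, which is precisely what is needed to prevent a monochromatic $K_p$ of color $3$. Everything else is bookkeeping on the constructed 3-coloring.
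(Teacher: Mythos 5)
Your argument is correct: the three-colour witness (a proper $2$-colouring of the non-neighbours, colour $3$ on $N_G(v)$, and $v$ absorbed into class $1$) together with the observation that $\omega(G[N_G(v)]) \leq \omega(G) - 1 \leq p - 1$ shows $G \not\arrowsv (2,2,p)$, and the degree bound then follows since a graph with chromatic number at least $3$ has at least $3$ vertices. The paper states this proposition without proof as ``easy to prove,'' and your argument is precisely the intended one.
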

As we will see further (see Table \ref{table: H(2, 2, 6; 7; 17) properties} and Table \ref{table: H(2, 2, 6; 7; 18) properties}), the inequality $\Delta(G) \leq \abs{\V(G)} - 4$ is exact. In some special cases, for example the proofs of Theorem \ref{theorem: abs(mathcal(H)(2, 2, 6; 7; 17)) = 3}, Theorem \ref{theorem: abs(mathcal(H)(2, 2, 6; 7; 18)) = 76515} and Theorem \ref{theorem: F_v(2, 2, 7; 8) = 20}, we can use the inequality $\Delta(G) \leq \abs{\V(G)} - 4$ to speed up computations in some parts of the proofs. We used this inequality only to make sure that the obtained results are correct.\\

All computations were done on a personal computer. The proofs of Theorem \ref{theorem: abs(mathcal(H)(2, 2, 6; 7; 18)) = 76515} and Theorem \ref{theorem: F_v(2, 2, 7; 8) = 20} were the most time consuming, each taking about a month to complete.

\section{Finding all graphs in $\mH(2, 2, 6; 7; 17)$ and computation of the numbers $F_v(2, 2, 6; 7)$ and $F_v(3, 6; 7)$}

Let $a_1, ..., a_s$ be positive integers and let $m$ and $p$ be defined by (\ref{equation: m and p}). According to (\ref{equation: F_v(a_1, ..., a_s; m - 1) exists}), $F_v(a_1, ..., a_s; m - 1)$ exists if and only if $m \geq p + 2$. In the border case $m = p + 2, \ p \geq 3$, there are only two canonical numbers in the form $F_v(a_1, ..., a_s; m - 1)$, namely $F_v(2, 2, p; p + 1)$ and $F_v(3, p; p + 1)$. The computation of the numbers $F_v(a_1, ..., a_s; m - 1)$ when $\max\set{a_1, ..., a_s} = 6$, i.e. the proof of Theorem \ref{theorem: F_v(a_1, ..., a_s; m - 1) = ..., max set(a_1, ..., a_s) = 6}, will be done with the help of the numbers $F_v(2, 2, 6; 7)$ and $F_v(3, 6; 7)$. Because of this, we will first compute these two numbers by proving
\begin{theorem}
\label{theorem: F_v(2, 2, 6; 7) = 17 and F_v(3, 6; 7) = 18}
$F_v(2, 2, 6; 7) = 17$ and $F_v(3, 6; 7) = 18$.
\end{theorem}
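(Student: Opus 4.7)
The plan is to combine the bounds in (\ref{equation: m + 9 leq F_v(a_1, ..., a_s) leq m + 10}) with a computer-assisted enumeration based on the algorithms of Section~3. In both cases we have $p = 6$ and $m = 8$, so (\ref{equation: m + 9 leq F_v(a_1, ..., a_s) leq m + 10}) immediately yields
\begin{equation*}
17 = m + 9 \le F_v(2, 2, 6; 7),\ F_v(3, 6; 7) \le m + 10 = 18.
\end{equation*}
What remains is to decide which endpoint is attained in each case: we need $\mH(2, 2, 6; 7; 17) \ne \emptyset$ (which gives $F_v(2, 2, 6; 7) = 17$) and $\mH(3, 6; 7; 17) = \emptyset$ (which gives $F_v(3, 6; 7) = 18$).

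First I would enumerate $\mH_{max}(2, 2, 6; 7; 17)$. By (\ref{equation: G in mH(a_1, ..., a_s; m - 1; n) Rightarrow alpha(G) leq n - m - p + 1}) any $G \in \mH(2, 2, 6; 7; 17)$ satisfies $2 \le \alpha(G) \le 17 - 8 - 6 + 1 = 4$, so it suffices to run Algorithm~\ref{algorithm: mH_(max)(a_1, ..., a_s; q; n), alpha(G) = 2} for $\alpha(G) = 2$ and Algorithm~\ref{algorithm: mH_(max)(a_1, ..., a_s; q; n)} for $\alpha(G) \in \{3, 4\}$. The required seed lists are the maximal graphs in $\mH(2, 6; 7; 17 - r)$ (the entry $a_i - 1 = 1$ is dropped) with appropriate bounds on the independence number; these are produced recursively by the same scheme at smaller $n$, with isomorph rejection handled by \emph{nauty} and the degree restriction $\Delta(G) \le |V(G)| - 4$ used for pruning.

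Once $\mH_{max}(2, 2, 6; 7; 17)$ is in hand, both claims reduce to routine checks. By the lower bound the set cannot be empty, so exhibiting any witness gives $F_v(2, 2, 6; 7) = 17$. For the second number, observe that if $G \in \mH(3, 6; 7; 17)$ then adding edges one at a time while preserving $K_7$-freeness produces a graph $G' \in \mH_{max}(3, 6; 7; 17)$, since the arrow property is monotone under edge addition. Moreover, (\ref{equation: G arrowsv (a_1, ..., a_s) Rightarrow G arrowsv (a_1, ..., a_(i - 1), t, a_i - t, a_(i + 1), ..., a_s)}) gives $G' \arrowsv (3, 6) \Rightarrow G' \arrowsv (2, 2, 6)$, so $G' \in \mH_{max}(2, 2, 6; 7; 17)$. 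Therefore it suffices to test each graph in the already enumerated list for the arrow $G \arrowsv (3, 6)$; if none passes, then $\mH(3, 6; 7; 17) = \emptyset$, and combined with the upper bound we conclude $F_v(3, 6; 7) = 18$.

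The main obstacle is the enumeration itself. At $n = 17$ one must first generate the (potentially very large) family of seed graphs $H \in \mH_{+K_6}(2, 6; 7; 17 - r)$ with $\alpha(H)$ bounded, compute for each $H$ the collection of all maximal $K_6$-free subsets of $V(H)$, and then scan all $r$-element multisets of these subsets for the combinatorial conditions (a)--(c) of step~2.2 of Algorithm~\ref{algorithm: mH_(max)(a_1, ..., a_s; q; n)}; aggressive pruning, exploitation of the degree bound, and isomorph rejection at every intermediate level are what keep the computation feasible. By contrast, the arrow tests for $\arrowsv (3, 6)$ on the resulting finite list are comparatively inexpensive.
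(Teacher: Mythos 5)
Your proposal follows essentially the same route as the paper: the bounds $17 \le F_v(2,2,6;7) \le F_v(3,6;7) \le 18$ come from (\ref{equation: m + 9 leq F_v(a_1, ..., a_s) leq m + 10}), the set $\mH_{max}(2,2,6;7;17)$ is enumerated with Algorithm \ref{algorithm: mH_(max)(a_1, ..., a_s; q; n), alpha(G) = 2} and Algorithm \ref{algorithm: mH_(max)(a_1, ..., a_s; q; n)} over the range $2 \le \alpha(G) \le 4$ given by (\ref{equation: G in mH(a_1, ..., a_s; m - 1; n) Rightarrow alpha(G) leq n - m - p + 1}), and $F_v(3,6;7)=18$ is then deduced exactly as you describe, by testing the (single) maximal graph found for the arrow $\arrowsv (3,6)$. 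One correction: the clause ``by the lower bound the set cannot be empty'' is backwards --- the lower bound $F_v(2,2,6;7) \ge 17$ only asserts $\mH(2,2,6;7;16) = \emptyset$, whereas the non-emptiness of $\mH(2,2,6;7;17)$ is equivalent to the upper bound $F_v(2,2,6;7) \le 17$ and is established only because the enumeration actually produces a witness (the paper finds $\mH_{max}(2,2,6;7;17)=\set{G_1}$), not a priori.
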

From (\ref{equation: G arrowsv (a_1, ..., a_s) Rightarrow G arrowsv (a_1, ..., a_(i - 1), t, a_i - t, a_(i + 1), ..., a_s)}) it is easy to see that
\begin{equation*}
G \arrowsv (3, p) \Rightarrow G \arrowsv (2, 2, p)
\end{equation*}
and therefore $F_v(2, 2, p; p + 1) \leq F_v(3, p; p + 1)$. In \cite{KN06c} the following problem is formulated:
\begin{problem}
\label{problem: F_v(2, 2, p; p + 1) neq F_v(3, p; p + 1)}
\cite{KN06c}
Does there exist a positive integer $p$ for which $F_v(2, 2, p; p + 1) \neq F_v(3, p; p + 1)$?
\end{problem}
Theorem \ref{theorem: F_v(2, 2, 6; 7) = 17 and F_v(3, 6; 7) = 18} gives a positive answer to Problem \ref{problem: F_v(2, 2, p; p + 1) neq F_v(3, p; p + 1)}. Since
\begin{equation*}
F_v(2, 2, p; p + 1) = F_v(3, p; p + 1), \ p \leq 5
\end{equation*}
(see \cite{BN15a}), it becomes clear that $p = 6$ is the smallest positive integer for which
\begin{equation*}
F_v(2, 2, p; p + 1) \neq F_v(3, p; p + 1)
\end{equation*}

For the proof Theorem \ref{theorem: F_v(2, 2, 6; 7) = 17 and F_v(3, 6; 7) = 18} we will need the following
\begin{theorem}
\label{theorem: abs(mathcal(H)(2, 2, 6; 7; 17)) = 3}
$\abs{\mH(2, 2, 6; 7; 17)} = 3$ and $\mH_{extr}(2, 2, 6; 7) = \mH(2, 2, 6; 7; 17) = \set{G_1, G_2, G_3}$ (see Figure \ref{figure: H(2, 2, 6; 7; 17)}).
\end{theorem}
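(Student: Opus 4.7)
The plan is to prove the theorem computationally by invoking Algorithm \ref{algorithm: mH_(max)(a_1, ..., a_s; q; n)} (specialized, where possible, to Algorithm \ref{algorithm: mH_(max)(a_1, ..., a_s; q; n), alpha(G) = 2}) and exhausting the finitely many cases that remain after independence-number bounds. Setting $a_1 = a_2 = 2, a_3 = 6$, we have $m = 8$ and $p = 6$, so (\ref{equation: G in mH(a_1, ..., a_s; m - 1; n) Rightarrow alpha(G) leq n - m - p + 1}) gives $\alpha(G) \leq n - 13$ for every $G \in \mH(2,2,6;7;n)$. Any such $G$ is $K_7$-free with $n \geq 7$, hence not complete, so $\alpha(G) \geq 2$. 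Combining, the arrowing can only occur for $n \geq 15$, with $\alpha(G) \in \{2, \dots, n - 13\}$.

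Next, I would run Algorithm \ref{algorithm: mH_(max)(a_1, ..., a_s; q; n)} for each pair $(n, r)$ with $n \in \{15, 16, 17\}$ and $r \in \{2, \dots, n - 13\}$. The input required is the set $\mH_{max}(1, 2, 6; 7; n - r) = \mH_{max}(2, 6; 7; n - r)$ of maximal $K_7$-free graphs arrowing $(2, 6)$ (equivalently, graphs $H$ with $\omega(H) \geq 6$ such that $H - I$ contains a $K_6$ for every independent set $I$), restricted to $\alpha(H) \leq r$. This seed set must itself be produced by a prior application of Algorithm \ref{algorithm: mH_(max)(a_1, ..., a_s; q; n)} starting from basic maximal $K_7$-free graphs containing a $K_6$, generated using \emph{nauty}. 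For $n = 15$ and $n = 16$ the algorithm must yield no graphs, which, combined with maximality, forces $\mH(2, 2, 6; 7; n) = \emptyset$ in these sizes and hence $F_v(2, 2, 6; 7) \geq 17$. For $n = 17$ the algorithm should output exactly three pairwise non-isomorphic graphs, which are the graphs $G_1, G_2, G_3$ depicted in Figure \ref{figure: H(2, 2, 6; 7; 17)}.

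Since Algorithm \ref{algorithm: mH_(max)(a_1, ..., a_s; q; n)} enumerates only \emph{maximal} members of $\mH(2, 2, 6; 7; 17)$, the last step is to verify that no proper edge-subgraph of any $G_i$ remains in $\mH(2, 2, 6; 7; 17)$ other than $G_1, G_2, G_3$ themselves. This is done by exhaustive edge-removal from each $G_i$, checking the arrowing $\arrowsv (2, 2, 6)$ directly at each step; any graph in $\mH(2, 2, 6; 7; 17)$ is a subgraph of some maximal element, so this check certifies that $\mH(2, 2, 6; 7; 17) = \{G_1, G_2, G_3\}$ and, being nonempty at $n = 17$, also gives the extremality equation. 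As an independent sanity check, the proposition preceding Section 4 implies $\Delta(G_i) \leq 13$, and this bound can be re-used to prune step 2.3 of the algorithm and verify the output.

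The main obstacle is purely computational: the intermediate class $\mH_{max}(2, 6; 7; k)$ for $k$ as large as $15$ can be very rich, so step 2.1 (enumerating maximal $K_6$-free subsets of $V(H)$) and step 2.2 (checking all $r$-element multisets satisfying conditions (a) and (b)) are expensive. The crucial pruning levers are the requirement $K_5 \subseteq M_{i_j} \cap M_{i_k}$ in step 2.2(a), the independence constraint in step 2.2(b), the degree bound $\Delta(G) \leq n - 4$, and aggressive isomorph rejection via \emph{nauty} after step 3; without these, the search over pairs/triples/quadruples of neighborhoods would be infeasible. The secondary difficulty is correctness: step 2.3 must test maximality ($\omega(G + e) = q$ for every non-edge) and step 4 must confirm the arrowing $\arrowsv (2, 2, 6)$ on the final candidates, both of which I would implement and cross-validate by an independent verification of $G_1, G_2, G_3$.
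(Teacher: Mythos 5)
Your proposal matches the paper's proof in all essentials: both stratify $\mH_{max}(2,2,6;7;17)$ by the independence number $\alpha(G)\in\{2,3,4\}$ obtained from (\ref{equation: G in mH(a_1, ..., a_s; m - 1; n) Rightarrow alpha(G) leq n - m - p + 1}), build the required seed sets by iterated application of Algorithm \ref{algorithm: mH_(max)(a_1, ..., a_s; q; n), alpha(G) = 2} and Algorithm \ref{algorithm: mH_(max)(a_1, ..., a_s; q; n)} starting from small known maximal graphs, and then recover the non-maximal members of $\mH(2,2,6;7;17)$ by exhaustive edge removal from the maximal ones. The only (minor) divergence is that you propose to establish $F_v(2,2,6;7)\geq 17$ by additionally running the enumeration at $n=15$ and $n=16$, whereas the paper simply invokes the already-known lower bound (\ref{equation: m + 9 leq F_v(a_1, ..., a_s) leq m + 10}) from \cite{BN15a}; both routes yield the extremality claim.
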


\begin{proof}
We will find all graphs in $\mH(2, 2, 6; 7; 17)$ with the help of a computer. Let $G \in \mH(2, 2, 6; 7; 17)$. Clearly $\alpha(G) \geq 2$, and according to (\ref{equation: G in mH(a_1, ..., a_s; m - 1; n) Rightarrow alpha(G) leq n - m - p + 1}), $\alpha(G) \leq 4$.

First, we prove that there are no graphs in $\mH_{max}(2, 2, 6; 7; 17)$ with independence number 4. According to Theorem \ref{theorem: F_v(a_1, ..., a_s; m) = m + p}, $\overline{C}_{13}$ is the only graph in $\mH(2, 6; 7; 13)$. Starting from $\mH_{max}(2, 6; 7; 13) = \set{\overline{C}_{13}}$, by applying Algorithm \ref{algorithm: mH_(max)(a_1, ..., a_s; q; n)} ($r = 4; t = 4$) we do not obtain any graphs, and from Theorem \ref{theorem: algorithm mH_(max)(a_1, ..., a_s; q; n)} it follows that there are no graphs in $\mH_{max}(2, 2, 6; 7; 17)$ with independence number 4.

Now, we shall prove that there are no graphs in $\mH_{max}(2, 2, 6; 7; 17)$ with independence number 3. It is clear that $K_6$ is the only graph in $\mH_{max}(3; 7; 6)$. Starting from $\mH_{max}(3; 7; 6) = \set{K_6}$ by applying Algorithm \ref{algorithm: mH_(max)(a_1, ..., a_s; q; n)} ($r = 2; t = 3$) we obtain all graphs with independence number not greater than 3 in $\mH_{max}(4; 7; 8)$. In the same way, we successively obtain all graphs with independence number not greater than 3 in $\mH_{max}(5; 7; 10)$, $\mH_{max}(6; 7; 12)$, $\mH_{max}(2, 6; 7; 14)$. In the end, no graphs are produced by applying Algorithm \ref{algorithm: mH_(max)(a_1, ..., a_s; q; n)} ($r = 3; t = 3$) to the obtained graphs in $\mH_{max}(2, 6; 7; 14)$ with independence number not greater than 3, and from Theorem \ref{theorem: algorithm mH_(max)(a_1, ..., a_s; q; n)} we conclude that there are no graphs in $\mH_{max}(2, 2, 6; 7; 17)$ with independence number 3.

The last part of the proof is to find all graphs in $\mH_{max}(2, 2, 6; 7; 17)$ with independence number 2. It is clear that $K_7 - e$ is the only graph in $\mH_{max}(3; 7; 7)$. By applying Algorithm \ref{algorithm: mH_(max)(a_1, ..., a_s; q; n), alpha(G) = 2} we successively obtain all graphs with independence number 2 in $\mH_{max}(4; 7; 9)$, $\mH_{max}(5; 7; 11)$, $\mH_{max}(6; 7; 13)$, $\mH_{max}(2, 6; 7; 15)$ and $\mH_{max}(2, 2, 6; 7; 17)$. As a result, we obtain the graph $G_1 \in \mH_{max}(2, 2, 6; 7; 17)$, which is shown on Figure \ref{figure: H(2, 2, 6; 7; 17)}. According to Theorem \ref{theorem: algorithm mH_(max)(a_1, ..., a_s; q; n), alpha(G) = 2}, $G_1$ is the only graph in $\mH_{max}(2, 2, 6; 7; 17)$ with independence number 2. Since there are no graphs in $\mH_{max}(2, 2, 6; 7; 17)$ with independence number greater than 2, we proved that $\mH_{max}(2, 2, 6; 7; 17) = \set{G_1}$

The number of maximal $K_7$-free graphs and $(+K_6)$-graphs obtained in each step of the proof is shown in Table \ref{table: finding all graphs in H(2, 2, 6; 7; 17)}. By removing edges from $G_1$ we find that there are only two other graphs in $\mH(2, 2, 6; 7; 17)$, which we will denote by $G_2$ and $G_3$ (see Figure \ref{figure: H(2, 2, 6; 7; 17)}). Let us also note, that $G_1 \supset G_2 \supset G_3$ and for the graphs $G_1$, $G_2$ and $G_3$ the inequality (\ref{equation: G arrowsv (a_1, ..., a_s) Rightarrow chi(G) geq m}) is strict (see Conjecture \ref{conjecture: chi(G) leq m + 1}). It is clear that $G_3$ is the only minimal graph in $\mH(2, 2, 6; 7; 17)$. Some properties of the graphs $G_1$, $G_2$ and $G_3$ are given in Table \ref{table: H(2, 2, 6; 7; 17) properties}.
From (\ref{equation: m + 9 leq F_v(a_1, ..., a_s) leq m + 10}) ($m = 8, p = 6$) we obtain
\begin{equation}
\label{equation: 17 leq F_v(2, 2, 6; 7) leq F_v(3, 6; 7) leq 18}
17 \leq F_v(2, 2, 6; 7) \leq F_v(3, 6; 7) \leq 18.
\end{equation}
The inequality $F_v(2, 2, 6; 7) \geq 17$ also follows from the fact, that the graphs $G_1$, $G_2$ and $G_3$ have no isolated vertices. The inequality $F_v(3, 6; 7) \leq 18$ was first proved in \cite{SXP09}.

From (\ref{equation: 17 leq F_v(2, 2, 6; 7) leq F_v(3, 6; 7) leq 18}) it follows $\mH_{extr}(2, 2, 6; 7) = \mH(2, 2, 6; 7; 17) = \set{G_1, G_2, G_3}$. Thus, we finish the proof of Theorem \ref{theorem: abs(mathcal(H)(2, 2, 6; 7; 17)) = 3}.\\
\end{proof}

\begin{table}[h]
	\centering
	\begin{tabular}{ | l | r | r | r | r | r | r | }
		\hline
		Graph		& $|\E(G)|$		& $\delta(G)$	& $\Delta(G)$	& $\alpha(G)$	& $\chi(G)$		& $|Aut(G)|$	\\
		\hline
		$G_{1}$		&  108			& 12			& 13			& 2				& 9				& 2		\\
		$G_{2}$		&  107			& 11			& 13			& 2				& 9				& 4		\\
		$G_{3}$		&  106			& 11			& 13			& 2				& 9				& 40	\\
		\hline
	\end{tabular}
	\caption{The graphs in $\mH(2, 2, 6; 7; 17)$ and some of their properties}
	\label{table: H(2, 2, 6; 7; 17) properties}
\end{table}

\begin{figure}
	\centering
	\begin{subfigure}{0.5\textwidth}
		\centering
		\includegraphics[height=224px,width=112px]{./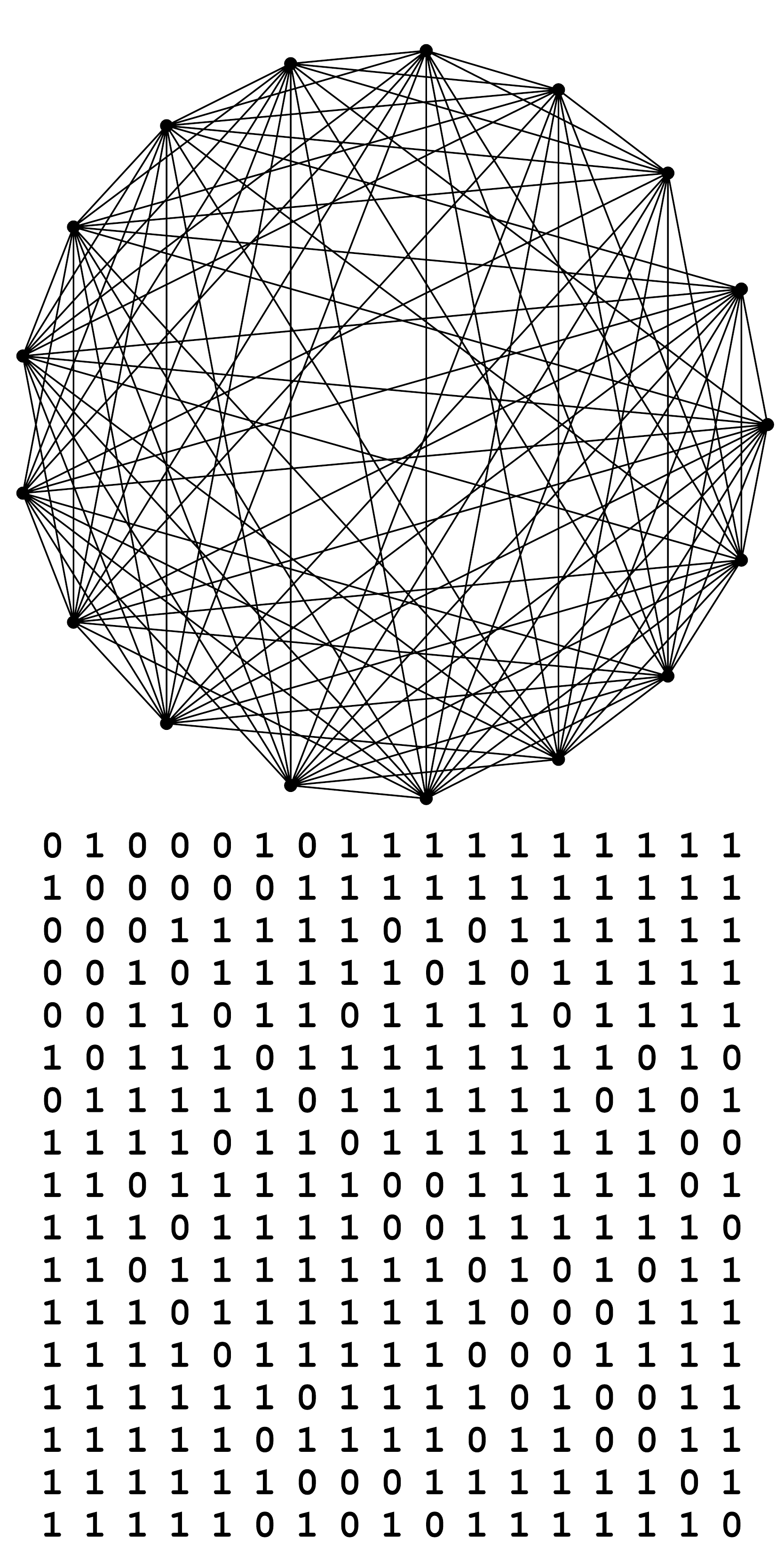}
		\caption*{\emph{$G_{1}$}}
		\label{figure: G_1}
	\end{subfigure}%
	\begin{subfigure}{0.5\textwidth}
		\centering
		\includegraphics[height=224px,width=112px]{./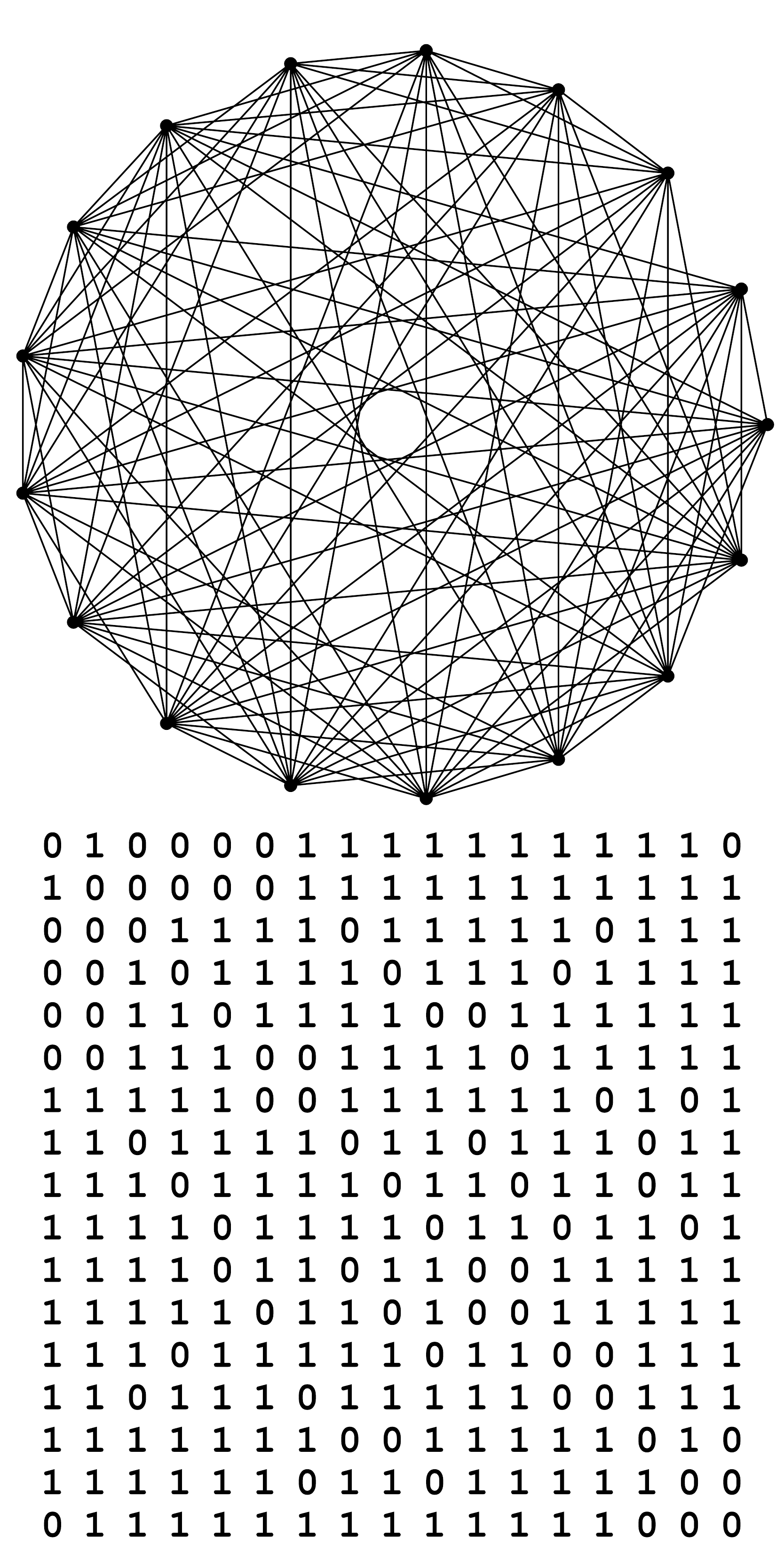}
		\caption*{\emph{$G_{2}$}}
		\label{figure: G_2}
	\end{subfigure}
	
	\begin{subfigure}{0.5\textwidth}
		\centering
		\includegraphics[height=224px,width=112px]{./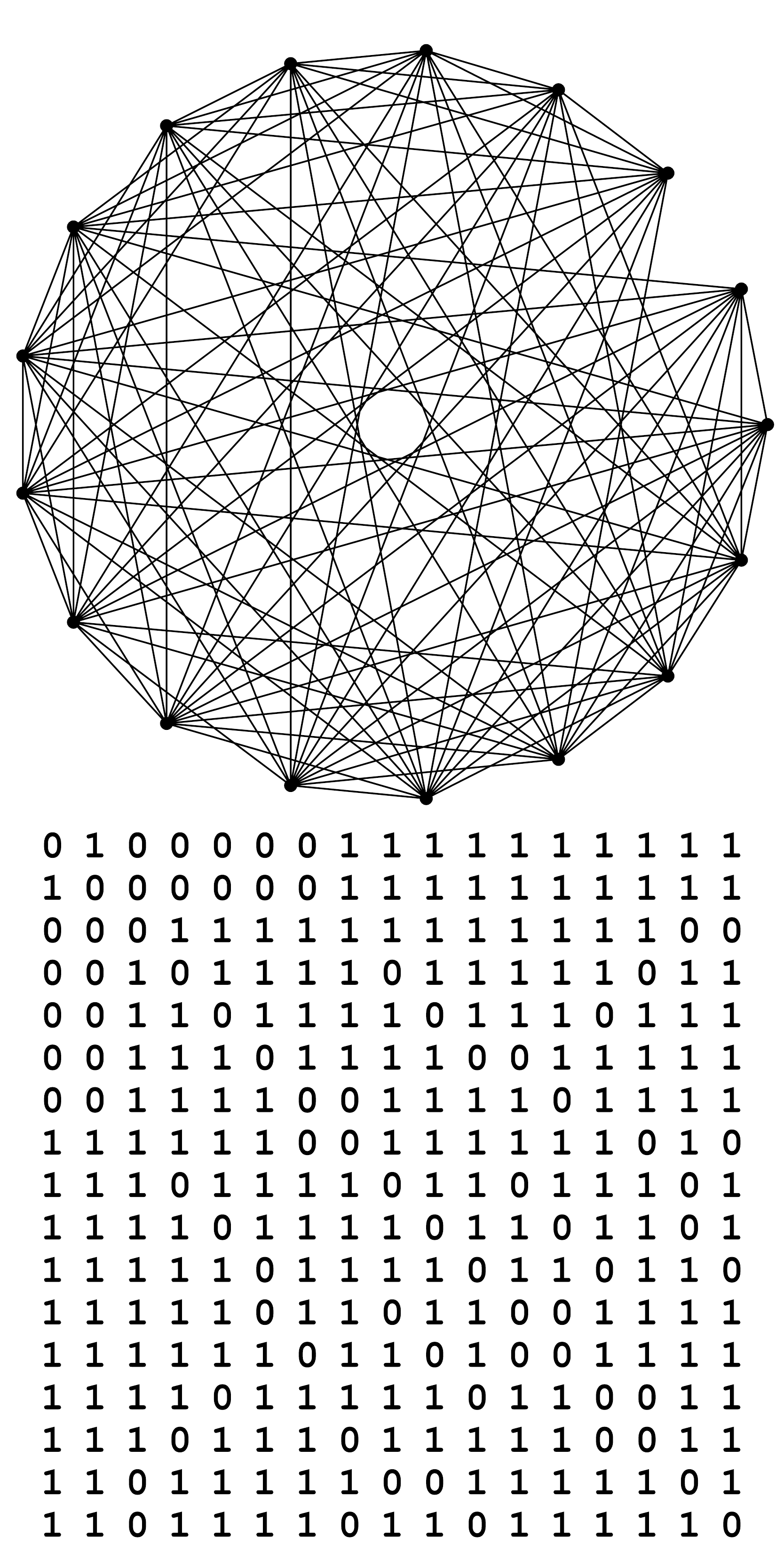}
		\caption*{\emph{$G_{3}$}}
		\label{figure: G_3}
	\end{subfigure}%
	\caption{All 3 graphs in $\mH(2, 2, 6; 7; 17)$}
	\label{figure: H(2, 2, 6; 7; 17)}
\end{figure}

\subsection*{Proof of Theorem \ref{theorem: F_v(2, 2, 6; 7) = 17 and F_v(3, 6; 7) = 18}}

The equality $F_v(2, 2, 6; 7) = 17$ follows from Theorem \ref{theorem: abs(mathcal(H)(2, 2, 6; 7; 17)) = 3}. According to (\ref{equation: 17 leq F_v(2, 2, 6; 7) leq F_v(3, 6; 7) leq 18}), it remains to be proved that $F_v(3, 6; 7) \neq 17$. Since $\mH(3, 6; 7) \subseteq \mH(2, 2, 6; 7)$ (see (\ref{equation: G arrowsv (a_1, ..., a_s) Rightarrow G arrowsv (a_1, ..., a_(i - 1), t, a_i - t, a_(i + 1), ..., a_s)})), but $G_1 \not\in \mH(3, 6; 7)$, we come to the conclusion that $H_v(3, 6; 7; 17) = \emptyset$ and $F_v(3, 6; 7) = 18$.
\qed

\section{Proof of Theorem \ref{theorem: F_v(a_1, ..., a_s; m - 1) = ..., max set(a_1, ..., a_s) = 6} (a)}

We will do the proof with the help of the following
\begin{theorem}
\label{theorem: rp}
\cite{BN15a}
Let $\rp(p) = \rp$ be the smallest positive integer for which
\begin{equation*}
\min_{r \geq 2} \set{F_v(2_r, p; r + p - 1) - r} = F_v(2_{\rp}, p; \rp + p - 1) - \rp.
\end{equation*}

Then:
\begin{flalign*}
F_v(2_r, p; r + p - 1) = F(2_{\rp}, p; \rp + p - 1) + r - \rp, \quad r \geq \rp. && \tag{a}
\end{flalign*}
\begin{flalign*}
\mbox{if $\rp = 2$, then} && \tag{b}
\end{flalign*}

$F_v(2_r, p; r + p - 1) = F_v(2, 2, p; p + 1) + r - 2, \quad r \geq 2$

\begin{flalign*}
\mbox{if $\rp > 2$ and $G$ is an extremal graph in $\mH (2_{\rp}, p; \rp + p - 1)$, then} && \tag{c}
\end{flalign*}

$G \arrowsv (2, \rp + p - 2).$

\begin{flalign*}
\rp < F_v(2, 2, p; p + 1) - 2p. && \tag{d}
\end{flalign*}
\end{theorem}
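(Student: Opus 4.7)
The plan is to prove the four parts in the order (a), (b), (c), (d), with (a) and (c) carrying the substantive arguments. For (a), the lower bound $F_v(2_r, p; r+p-1) \geq F_v(2_{\rp}, p; \rp+p-1) + r - \rp$ when $r \geq \rp$ is immediate from the defining minimality of $\rp$. For the upper bound I would take an extremal $G_0 \in \mH_{extr}(2_{\rp}, p; \rp+p-1)$ and form the join $H = G_0 + K_{r-\rp}$; then $\abs{\V(H)} = F_v(2_{\rp}, p; \rp+p-1) + (r-\rp)$ and $\omega(H) \leq (\rp+p-2) + (r-\rp) = r+p-2$, so $H$ is $K_{r+p-1}$-free. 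The crux is the lemma that a one-vertex join extends arrowing: if $G \in \mH(a_1, ..., a_s; q)$ then $G + K_1 \arrowsv (a_1, ..., a_s, 2)$ and $G + K_1$ is $K_{q+1}$-free. This is proved by case analysis on the color $c$ of the new vertex: if $c$ is the extra ``2''-color, either some $G$-neighbor shares $c$ (monochromatic edge) or $\V(G)$ avoids $c$ and $G \arrowsv (a_1, ..., a_s)$ applies directly; if $c$ is some original color $j$, I would merge the extra color into color $j$ on $\V(G)$, apply the arrowing of $G$, and in the problematic case lift back using the full adjacency of the new vertex to $\V(G)$. Iterating this lemma $r - \rp$ times gives $H \arrowsv (2_r, p)$. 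Part (b) is then the immediate case $\rp = 2$.

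For (c), I would argue by contradiction. Suppose $G \in \mH_{extr}(2_{\rp}, p; \rp+p-1)$ with $\rp > 2$ and $G \not\arrowsv (2, \rp+p-2)$. Then there is a partition $\V(G) = A \cup B$ with $A$ independent and $\omega(G[B]) \leq \rp+p-3$; choosing such a partition with $\abs{A}$ maximal allows us to assume $\abs{A} \geq 1$. By Proposition~\ref{proposition: G - A arrowsv (a_1, ..., a_(i - 1), a_i - 1, a_(i + 1_, ..., a_s)}, $G - A \arrowsv (2_{\rp-1}, p)$, and since $G - A = G[B]$ is $K_{\rp+p-2}$-free, we have $G - A \in \mH(2_{\rp-1}, p; \rp+p-2)$. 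The strict minimality of $\rp$ as the smallest index achieving the minimum gives $F_v(2_{\rp-1}, p; \rp+p-2) - (\rp-1) > F_v(2_{\rp}, p; \rp+p-1) - \rp$, hence $F_v(2_{\rp-1}, p; \rp+p-2) \geq F_v(2_{\rp}, p; \rp+p-1)$. Thus $\abs{\V(G)} \geq F_v(2_{\rp-1}, p; \rp+p-2) + \abs{A} \geq F_v(2_{\rp}, p; \rp+p-1) + 1$, contradicting extremality of $G$.

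For (d), the case $\rp = 2$ follows directly from $F_v(2, 2, p; p+1) \geq 2p+4$ by (\ref{equation: m + p + 2 leq F_v(a_1, ..., a_s; m - 1) leq m + 3p}). For $\rp > 2$, part (c) places $G$ in $\mH(2, \rp+p-2; \rp+p-1)$, and Theorem~\ref{theorem: F_v(a_1, ..., a_s; m) = m + p} gives $F_v(2, \rp+p-2; \rp+p-1) = 2\rp+2p-3$ with unique extremal graph $K_{\rp-2} + \overline{C}_{2\rp+2p-3}$. The main obstacle is to verify that this unique graph does \emph{not} arrow $(2_{\rp}, p)$: I would exhibit an explicit $(\rp+1)$-coloring by assigning the $\rp-2$ clique vertices distinct ``2''-colors, which forces $\overline{C}_{2\rp+2p-3}$ to use only the two remaining ``2''-colors and the $p$-color, and then partitioning the odd-cycle complement into two pairs of consecutive vertices (each independent in $\overline{C}$) plus a triangle-free remainder. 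This pushes the lower bound to $\abs{\V(G)} \geq 2\rp+2p-2$. On the other hand, since $\rp > 2$ the minimality at $r = 2$ is strict, so $F_v(2_{\rp}, p; \rp+p-1) - \rp < F_v(2, 2, p; p+1) - 2$, i.e., $\abs{\V(G)} \leq F_v(2, 2, p; p+1) + \rp - 3$. Combining yields $\rp \leq F_v(2, 2, p; p+1) - 2p - 1 < F_v(2, 2, p; p+1) - 2p$, as desired.
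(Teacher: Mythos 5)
Your parts (a), (b) and (c) follow essentially the same route as the paper (the paper proves the analogous Theorem~\ref{theorem: rpp} and notes this one is proved identically): the lower bound in (a) from the minimality defining $\rp$, the upper bound from the join construction $K_{r-\rp}+G$ via (\ref{equation: G arrowsv (a_1, ..., a_s) Rightarrow K_t + G arrowsv (2_t, a_1, ..., a_s)}), and (c) by contradiction using Proposition~\ref{proposition: G - A arrowsv (a_1, ..., a_(i - 1), a_i - 1, a_(i + 1_, ..., a_s)} and the strict minimality of $\rp$. Those parts are fine.

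Part (d) has a genuine gap. First, you misidentify the extremal graph: applying Theorem~\ref{theorem: F_v(a_1, ..., a_s; m) = m + p}(b) to $\mH(2,\rp+p-2;\rp+p-1)$ gives $m-p-1=(\rp+p-1)-(\rp+p-2)-1=0$, so the unique extremal graph is $\overline{C}_{2\rp+2p-3}$, not $K_{\rp-2}+\overline{C}_{2\rp+2p-3}$ (your graph has $3\rp+2p-5$ vertices, contradicting your own value $F_v(2,\rp+p-2;\rp+p-1)=2\rp+2p-3$). Second, the coloring you sketch does not work: removing only two consecutive pairs from $C_{2\rp+2p-3}$ leaves $2\rp+2p-7$ vertices forming at most two paths, whose independence number in the cycle is at least $\rp+p-3\geq p$ for $\rp\geq 3$, so the ``remainder'' class contains a $K_p$ in $\overline{C}_{2\rp+2p-3}$ (and ``triangle-free'' is not the relevant condition in any case --- you need the class to be $K_p$-free). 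The step is repairable: either use all $\rp$ independent colors on $\rp$ consecutive pairs of the cycle, leaving a path on $2p-3$ vertices with independence number $p-1<p$, or --- as the paper does --- avoid the coloring entirely by invoking (\ref{equation: G arrowsv (a_1, ..., a_s) Rightarrow chi(G) geq m}): any $G\arrowsv(2_{\rp},p)$ has $\chi(G)\geq \rp+p$, while $\chi(\overline{C}_{2\rp+2p-3})=\rp+p-1$, so $G\neq\overline{C}_{2\rp+2p-3}$ and hence $\abs{\V(G)}\geq 2\rp+2p-2$. Your concluding arithmetic combining this with the strict inequality $F_v(2_{\rp},p;\rp+p-1)-\rp<F_v(2,2,p;p+1)-2$ is correct once that bound is established.
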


Theorem \ref{theorem: rp} is proved in \cite{BN15a} as Theorem 5.2. We will note that the proof of Theorem \ref{theorem: rp} is analogous to that of Theorem \ref{theorem: rpp} from this paper.

In relation to Theorem \ref{theorem: rp}(b) in \cite{BN15a} we formulate
\begin{conjecture}
	\label{conjecture: rp(p) = 2, p geq 4}
	If $p \geq 4$, then
	\begin{equation*}
		\min_{r \geq 2} \set{F_v(2_r, p; r + p - 1) - r} = F_v(2, 2, p; p + 1) - 2,
	\end{equation*}
	and therefore
	\begin{equation*}
		F_v(2_r, p; r + p - 1) = F_v(2, 2, p; p + 1) + r - 2, \quad r \geq 2.
	\end{equation*}
\end{conjecture}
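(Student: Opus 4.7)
Conjecture \ref{conjecture: rp(p) = 2, p geq 4} is equivalent to the statement $\rp(p) = 2$ for all $p \geq 4$, since Theorem \ref{theorem: rp}(b) then supplies the explicit formula $F_v(2_r, p; r + p - 1) = F_v(2, 2, p; p + 1) + r - 2$ for every $r \geq 2$. Thus I split the problem into two matching inequalities and attempt them separately: the upper bound, which I expect to be clean, and the lower bound, which I expect to be the whole difficulty.

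\textbf{Upper bound.} To prove $F_v(2_r, p; r + p - 1) \leq F_v(2, 2, p; p + 1) + r - 2$, let $H$ be any extremal graph in $\mH(2, 2, p; p + 1)$ and form $G = H + K_{r - 2}$ (the join of $H$ with a clique on $r - 2$ new vertices). Since $\omega(H) \leq p$, we have $\omega(G) \leq p + r - 2 < r + p - 1$. Given any $(r + 1)$-coloring of $V(G)$, either two of the $r - 2$ new vertices share a color (done), or they receive $r - 2$ distinct colors $c_1, \dots, c_{r-2}$; then if any vertex of $H$ carries one of these colors we again have a monochromatic edge (because of the join), and otherwise $V(H)$ is colored in the remaining three colors, so $H \arrowsv (2, 2, p)$ produces the desired monochromatic configuration. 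This yields $G \arrowsv (2_r, p)$ and $G \in \mH(2_r, p; r + p - 1)$.

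\textbf{Lower bound.} To prove $F_v(2_r, p; r + p - 1) \geq F_v(2, 2, p; p + 1) + r - 2$ I would proceed by induction on $r \geq 3$, the base case being vacuous. Let $G$ be extremal in $\mH(2_r, p; r + p - 1)$. If I could always exhibit an independent set $I \subseteq V(G)$ with $\abs{I} = r - 2$ satisfying $\omega(G - I) \leq p$, then $G - I \in \mH(2, 2, p; p + 1)$ by Proposition \ref{proposition: G - A arrowsv (a_1, ..., a_(i - 1), a_i - 1, a_(i + 1_, ..., a_s)} applied $r - 2$ times, giving $\abs{V(G)} \geq F_v(2, 2, p; p + 1) + r - 2$. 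Equivalently, I must show that the $K_{p+1}$'s, $K_{p+2}$'s, \dots, $K_{r + p - 2}$'s of $G$ admit a common independent transversal of size $r - 2$.

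\textbf{Main obstacle.} The existence of such a transversal is the crux, and this is where I expect the argument to be hard. A natural first attempt is to invoke Theorem \ref{theorem: rp}(c) contrapositively: if $\rp > 2$, then every extremal $G$ satisfies $G \arrowsv (2, \rp + p - 2)$, which forces $\omega(G) = \rp + p - 2$ and, more restrictively, forces every independent set $I$ to leave $G - I$ with a clique of size at least $\rp + p - 2$. This rigidity ought to impose strong structural constraints on $G$ (for instance on how its maximum cliques overlap), which I would exploit to either build the desired transversal by a greedy/probabilistic argument on the clique hypergraph of $G$ or to directly construct a smaller witness graph and contradict extremality. A secondary obstacle is that the hypothesis $p \geq 4$ must be used somewhere, since Theorem \ref{theorem: rp}(b) is known to fail in spirit for small $p$; I would try to locate the role of $p \geq 4$ in ensuring that cliques of size $\geq p + 1$ are sparse enough in a $K_{r+p-1}$-free graph of minimum possible order to admit the transversal. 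Absent a fully general argument, I would fall back on verifying the conjecture computationally for the next unknown case ($p = 6$, where Theorem \ref{theorem: F_v(a_1, ..., a_s; m - 1) = ..., max set(a_1, ..., a_s) = 6}(a) already establishes $\rp(6) = 2$) and use the insight gained to guide the general proof.
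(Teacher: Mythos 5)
First, a point of framing: the statement you were asked to prove is Conjecture~\ref{conjecture: rp(p) = 2, p geq 4}, which the paper does \emph{not} prove --- it is open, and the paper only establishes it for $p=4,5,6$. The mechanism used there is entirely different from yours: Theorem~\ref{theorem: F_v(2, 2, p; p + 1) leq 2p + 5, then ...} shows that whenever $F_v(2,2,p;p+1)\leq 2p+5$, the \emph{generic} lower bounds (\ref{equation: m + p + 2 leq F_v(a_1, ..., a_s; m - 1) leq m + 3p}) and Theorem~\ref{theorem: F_v(a_1, ..., a_s; m - 1) geq m + p + 3} already force $F_v(2_r,p;r+p-1)-r\geq F_v(2,2,p;p+1)-2$ for all $r\geq 2$, with no structural analysis of extremal graphs at all; this settles $p=4$ and $p=6$, while $p=5$ (where $F_v(2,2,5;6)=16=2p+6$) needs a separate computer search. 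Your reduction of the conjecture to $\rp(p)=2$ via Theorem~\ref{theorem: rp}(b) is correct, and your upper bound is in substance the paper's implication (\ref{equation: G arrowsv (a_1, ..., a_s) Rightarrow K_t + G arrowsv (2_t, a_1, ..., a_s)}); however, your case analysis is incomplete. In a coloring with targets $(2_r,p)$ there is one color class whose target is $p$, and two adjacent vertices sharing \emph{that} color give only a monochromatic edge, not a $K_p$, so your repeated ``(done)'' does not follow. The standard repair is an absorption argument: merge the (edge-free) class of a ``2''-color with the class containing a joined vertex $v$, and in any resulting monochromatic clique replace the at most one absorbed vertex by $v$.

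The lower bound is where the proposal genuinely breaks. You ask for a single independent set $I$ with $\abs{I}=r-2$ and $\omega(G-I)\leq p$; but deleting an independent set lowers the clique number by at most one, so this is impossible as soon as $\omega(G)\geq p+2$, which is the typical situation (extremal graphs in $\mH(2_r,p;r+p-1)$ may have $\omega(G)=r+p-2$). The corrected version --- peel off $r-2$ independent sets one at a time, each meeting every maximum clique of the current graph --- is precisely the obstruction that keeps this a conjecture: Theorem~\ref{theorem: rp}(c) states that if $\rp>2$ then every extremal $G\in\mH(2_{\rp},p;\rp+p-1)$ satisfies $G\arrowsv(2,\rp+p-2)$, i.e.\ \emph{no} independent set whose removal lowers the clique number exists, so postulating your transversal at the first step of the induction is essentially assuming the conclusion. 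Your fallback (computational verification for $p=6$) is exactly what the paper does, via $F_v(2,2,6;7)=17$ together with Theorem~\ref{theorem: F_v(2, 2, p; p + 1) leq 2p + 5, then ...}, but it does not yield the conjecture for general $p$; as a proof of the stated conjecture the proposal is therefore incomplete, and no complete proof is currently known.
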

It is not difficult to see that Conjecture \ref{conjecture: rp(p) = 2, p geq 4} is true if and only if the sequence $\set{F_v(2_r, p; r + p - 1)}$ for fixed $p$ is strictly increasing with respect to $r \geq 2$. Since $F_v(2, 2, 3; 4) = 14$ \cite{Nen00} and \cite{CR06}, and $F_v(a_1, ..., a_s; m - 1) = m + 6$, if $p = 3$ and $m \geq 6$ \cite{Nen02}, we have $\rp(3) = 3$. Since $F_v(2, 2, 4; 5) = 13$ \cite{Nen02}, from Theorem \ref{theorem: F_v(2, 2, p; p + 1) leq 2p + 5, then ...} it follows that $\rp(4) = 2$. The equality $\rp(5) = 2$ is also true, but it does not follow from Theorem \ref{theorem: F_v(2, 2, p; p + 1) leq 2p + 5, then ...}. It is proved with the help of a computer in \cite{BN15a} as Theorem 6.1. Therefore, Conjecture \ref{conjecture: rp(p) = 2, p geq 4} is true when $p = 4$ and $p = 5$ . We will prove that when $p = 6$ Conjecture \ref{conjecture: rp(p) = 2, p geq 4} is also true. More specifically, we will prove
\begin{theorem}
\label{theorem: rp(6) = 2}
$\rp(6) = 2$ and therefore $F_v(2_r, 6; r + 5) = r + 15, \ r \geq 2$.
\end{theorem}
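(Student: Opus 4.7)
The plan is to deduce the formula from the single key assertion $\rp(6) = 2$: once this is established, Theorem \ref{theorem: rp}(b) with $p = 6$ together with $F_v(2, 2, 6; 7) = 17$ (Theorem \ref{theorem: F_v(2, 2, 6; 7) = 17 and F_v(3, 6; 7) = 18}) immediately yields $F_v(2_r, 6; r + 5) = 17 + r - 2 = r + 15$ for every $r \geq 2$. The entire task thus reduces to proving $\rp(6) = 2$.

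The first move is to confine $\rp(6)$ to a small finite set. Theorem \ref{theorem: rp}(d) gives $\rp(6) < F_v(2, 2, 6; 7) - 12 = 5$, hence $\rp(6) \in \{2, 3, 4\}$. It suffices to exclude $\rp(6) \in \{3, 4\}$. Assume for contradiction that $\rp(6) = r_0$ with $r_0 \in \{3, 4\}$ and let $G$ be an extremal graph in $\mH(2_{r_0}, 6; r_0 + 5)$. By the minimality clause in the definition of $\rp$, $|V(G)| = F_v(2_{r_0}, 6; r_0 + 5) \leq r_0 + 14$. Theorem \ref{theorem: rp}(c) gives $G \arrowsv (2, r_0 + 4)$, and combined with $\omega(G) < r_0 + 5$ this pins down $\omega(G) = r_0 + 4$, so $G \in \mH(2, r_0 + 4; r_0 + 5)$. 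Theorem \ref{theorem: F_v(a_1, ..., a_s; m) = m + p} then gives $|V(G)| \geq 2 r_0 + 9$. Thus for $r_0 = 3$ we must have $|V(G)| \in \{15, 16, 17\}$, and for $r_0 = 4$ we must have $|V(G)| \in \{17, 18\}$.

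Since both arrow relations are preserved under adding edges, we may further assume $G$ is a maximal $K_{r_0 + 5}$-free graph, i.e.\ $G \in \mH_{max}(2, r_0 + 4; r_0 + 5; |V(G)|)$, while still satisfying $G \arrowsv (2_{r_0}, 6)$. The plan is then to apply Algorithm \ref{algorithm: mH_(max)(a_1, ..., a_s; q; n)} iteratively, starting from the unique minimum graph $\overline{C}_{2 r_0 + 9}$ of Theorem \ref{theorem: F_v(a_1, ..., a_s; m) = m + p}(b), to enumerate all of $\mH_{max}(2, 7; 8; n)$ for $n \in \{15, 16, 17\}$ and all of $\mH_{max}(2, 8; 9; n)$ for $n \in \{17, 18\}$; for every candidate run the arrow-test against $(2_{r_0}, 6)$ as in step 4 of that algorithm. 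The target conclusion is that no candidate survives, producing the desired contradiction.

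The main obstacle is computational: the families $\mH_{max}(2, 7; 8; 17)$ and $\mH_{max}(2, 8; 9; 18)$ are expected to be very large, and the $\arrowsv$-test is itself intractable in the worst case. To keep the enumeration feasible I would exploit the $(+K_{q-1})$-maximality pruning already built into Algorithm \ref{algorithm: mH_(max)(a_1, ..., a_s; q; n)}, the degree bound $\Delta(G) \leq |V(G)| - 4$ noted at the end of Section 3 as an additional filter, and \emph{nauty} for isomorph rejection throughout. Provided this enumeration produces no graph arrowing $(2_{r_0}, 6)$, both $r_0 = 3$ and $r_0 = 4$ are ruled out, $\rp(6) = 2$ follows, and the numerical formula is established.
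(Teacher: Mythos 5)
The reduction you set up is logically sound (the bound $\rp(6)<5$ from Theorem \ref{theorem: rp}(d), the strict inequality $F_v(2_{r_0},6;r_0+5)\leq r_0+14$ from the minimality of $\rp$, and the window $2r_0+9\leq |V(G)|\leq r_0+14$ via Theorem \ref{theorem: rp}(c) and Theorem \ref{theorem: F_v(a_1, ..., a_s; m) = m + p} are all correct), but your proof of the only nontrivial claim, $\rp(6)\notin\{3,4\}$, is left contingent on an enumeration you do not carry out: you conclude ``provided this enumeration produces no graph\ldots''. That is a genuine gap. It is also a practical one: enumerating all of $\mH_{max}(2,7;8;17)$ and $\mH_{max}(2,8;9;18)$ (with no restriction on independence number) is at least as heavy as the hardest computations in the paper --- the appendix shows that just the $\alpha=2$ part of $\mH_{max}(2,7;8;17)$ already involves some $3.8\times 10^8$ intermediate $(+K_7)$-graphs and took on the order of a month --- so the plan cannot be waved through as routine.

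The missing idea is that no new computation is needed at all. The paper deduces $\rp(6)=2$ in two lines from $F_v(2,2,6;7)=17$ (Theorem \ref{theorem: F_v(2, 2, 6; 7) = 17 and F_v(3, 6; 7) = 18}) via Theorem \ref{theorem: F_v(2, 2, p; p + 1) leq 2p + 5, then ...}: since $17=2\cdot 6+5$, the hypothesis of Theorem \ref{theorem: F_v(a_1, ..., a_s; m - 1) geq m + p + 3} (the lower bound of \cite{Nen02}) is met, which gives $F_v(2_r,6;r+5)\geq (r+6)+6+3=r+15$ for all $r\geq 2$; hence $F_v(2_r,6;r+5)-r\geq 15=F_v(2,2,6;7)-2$ for every $r\geq 2$, so the minimum in the definition of $\rp$ is attained at $r=2$ and $\rp(6)=2$. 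Your final formula $F_v(2_r,6;r+5)=r+15$ then follows from Theorem \ref{theorem: rp}(b) exactly as you say. You should replace the proposed case analysis and enumeration by this appeal to Theorem \ref{theorem: F_v(a_1, ..., a_s; m - 1) geq m + p + 3}.
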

Before proving Theorem \ref{theorem: rp(6) = 2} we will prove
\begin{theorem}
\label{theorem: F_v(2, 2, p; p + 1) leq 2p + 5, then ...}
Let $F_v(2, 2, p; p + 1) \leq 2p + 5$. Then $\rp(p) = 2$ and
\begin{equation*}
F_v(2_r, p; r + p - 1) = F_v(2, 2, p; p + 1) + r - 2, \ r \geq 2.
\end{equation*}
\end{theorem}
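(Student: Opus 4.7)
The plan is to establish $\rp(p) = 2$ directly; once this is done, the claimed identity $F_v(2_r, p; r + p - 1) = F_v(2, 2, p; p + 1) + r - 2$ for $r \geq 2$ is immediate from Theorem \ref{theorem: rp}(b). The starting point is to extract a uniform lower bound on the sequence $\{F_v(2_r, p; r + p - 1) - r\}_{r \geq 2}$: applying (\ref{equation: m + p + 2 leq F_v(a_1, ..., a_s; m - 1) leq m + 3p}) to the tuple $(2_r, p)$, whose associated $m$ equals $r + p$, yields $F_v(2_r, p; r + p - 1) - r \geq 2p + 2$ for every $r \geq 2$. Evaluating at $r = 2$ and combining with the hypothesis $F_v(2, 2, p; p + 1) \leq 2p + 5$ pins $F_v(2, 2, p; p + 1)$ down to either $2p + 4$ or $2p + 5$, and I would split the argument into these two cases.

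In the first case, $F_v(2, 2, p; p + 1) - 2 = 2p + 2$ already equals the uniform lower bound, so $r = 2$ attains the minimum of $F_v(2_r, p; r + p - 1) - r$ over $r \geq 2$. Since $\rp(p)$ is by definition the smallest value of $r$ attaining this minimum, we conclude $\rp(p) = 2$. In the second case, the hypothesis $F_v(2, 2, p; p + 1) \geq 2p + 5$ of Theorem \ref{theorem: F_v(a_1, ..., a_s; m - 1) geq m + p + 3} is satisfied; applying that theorem to $(2_r, p)$ (for which $m = r + p \geq p + 2$) upgrades the lower bound to $F_v(2_r, p; r + p - 1) - r \geq 2p + 3$ for all $r \geq 2$, which now matches $F_v(2, 2, p; p + 1) - 2 = 2p + 3$. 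Thus $r = 2$ again attains the minimum, and $\rp(p) = 2$.

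The argument is essentially bookkeeping; the only real content is the observation that the two available lower bounds on $F_v(2_r, p; r + p - 1) - r$, the unconditional one from (\ref{equation: m + p + 2 leq F_v(a_1, ..., a_s; m - 1) leq m + 3p}) and the one conditional on $F_v(2, 2, p; p + 1) \geq 2p + 5$ from Theorem \ref{theorem: F_v(a_1, ..., a_s; m - 1) geq m + p + 3}, align exactly with the candidate minimizing value at $r = 2$ in the two regimes permitted by the hypothesis, forcing $r = 2$ to be a minimizer and hence the smallest minimizer in both. There is no substantive obstacle; the proof is short.
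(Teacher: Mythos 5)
Your proposal is correct and follows essentially the same route as the paper: reduce to showing $\rp(p)=2$ via Theorem \ref{theorem: rp}(b), use the lower bound in (\ref{equation: m + p + 2 leq F_v(a_1, ..., a_s; m - 1) leq m + 3p}) to split into the two cases $F_v(2,2,p;p+1)=2p+4$ and $=2p+5$, and in the second case invoke Theorem \ref{theorem: F_v(a_1, ..., a_s; m - 1) geq m + p + 3} to match the candidate value at $r=2$. The paper's proof is the same case analysis with the same two lower bounds.
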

\begin{proof}
From Theorem \ref{theorem: rp}(b) it follows that it is enough to prove the equality $\rp(p) = 2$. According to (\ref{equation: m + p + 2 leq F_v(a_1, ..., a_s; m - 1) leq m + 3p}), $F_v(2, 2, p; p + 1) \geq 2p + 4$. Therefore, only the following two cases are possible:\\

\emph{Case 1.} $F_v(2, 2, p; p + 1) = 2p + 4$. According to (\ref{equation: m + p + 2 leq F_v(a_1, ..., a_s; m - 1) leq m + 3p})

$F_v(2_r, p; r + p - 1) \geq m + p + 2 = r + 2p + 2$.\\
Therefore,

$F_v(2_r, p; r + p - 1) - r \geq 2p + 2 = F_v(2, 2, p; p + 1) - 2, \ r \geq 2$,\\
and we have $\rp(p) = 2$.

\emph{Case 2.} $F_v(2, 2, p; p + 1) = 2p + 5$. From Theorem \ref{theorem: F_v(a_1, ..., a_s; m - 1) geq m + p + 3} we have $F_v(2_r, p; r + p - 1) \geq r + 2p + 3, \ r \geq 2$. From this inequality we obtain

$F_v(2_r, p; r + p - 1) - r \geq 2p + 3 = F_v(2, 2, p; p + 1) - 2, \ r \geq 2$.\\
Therefore, in this case we also have $\rp(p) = 2$.
\end{proof}

\begin{remark}
\label{remark: F_v(2, 2, p; p + 1) = 2p + 4?}
It is unknown whether the first case is possible, i.e. if $F_v(2, 2, p; p + 1) = 2p + 4$ for some $p$. If $p \leq 7$ this equality is not true.
\end{remark}

\subsection*{Proof of Theorem \ref{theorem: rp(6) = 2}}

According to Theorem \ref{theorem: F_v(2, 2, 6; 7) = 17 and F_v(3, 6; 7) = 18}, $F_v(2, 2, 6; 7) = 17$. From this fact and Theorem \ref{theorem: F_v(2, 2, p; p + 1) leq 2p + 5, then ...} we obtain $\rp(6) = 2$ and the equality $F_v(2_r, 6; r + 5) = r + 15, \ r \geq 2$.
\qed

\subsection*{Proof of Theorem \ref{theorem: F_v(a_1, ..., a_s; m - 1) = ..., max set(a_1, ..., a_s) = 6} (a)}

Since $a_1 = ... = a_{s-1} = 2$ and $a_s = 6$ we have $m = s + 5$ and therefore

$F_v(a_1, ..., a_s; m - 1) = F_v(2_{s - 1}, 6; m - 1) = F_v(2_{m - 6}, 6; m - 1)$.\\
From Theorem \ref{theorem: rp(6) = 2} it now follows that $F_v(a_1, ..., a_s; m - 1) = m + 9$.
\qed

\section{Proof of Theorem \ref{theorem: F_v(a_1, ..., a_s; m - 1) = ..., max set(a_1, ..., a_s) = 6} (b)}

We will need the following

\begin{theorem}
\label{theorem: rpp}
Let $\rpp(p) = \rpp$ be the smallest positive integer for which
\begin{equation*}
\min\set{F_v(2_r, 3, p; r + p + 1) - r} = F_v(2_{\rpp}, 3, p; \rpp + p + 1) - \rpp
\end{equation*}
Then
\begin{flalign*}
F_v(2_r, 3, p; r + p + 1) = F(2_{\rpp}, 3, p; \rpp + p + 1) + r - \rpp, \quad r \geq \rpp. && \tag{a}
\end{flalign*}
\begin{flalign*}
\mbox{if $\rpp = 0$, then} && \tag{b}
\end{flalign*}

$F_v(2_r, 3, p; r + p + 1) = F_v(3, p; p + 1) + r, \quad r \geq 0$

\begin{flalign*}
\mbox{if $\rpp > 0$ and $G$ is an extremal graph in $\mH (2_{\rpp}, 3, p; \rpp + p + 1)$, then} && \tag{c}
\end{flalign*}

$G \arrowsv (2, \rpp + p).$

\begin{flalign*}
\rpp < F_v(3, p; p + 1) - 2p - 2. && \tag{d}
\end{flalign*}
\end{theorem}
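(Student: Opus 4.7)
The plan is to mirror the structure of Theorem~\ref{theorem: rp} (which the authors note is analogous), proving (a) first, then deducing (b), then (c), and finally (d).

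For the upper bound in (a), I would use the inductive construction $G \mapsto G' := K_1 + G$ starting from an extremal $G \in \mH_{extr}(2_\rpp,3,p;\rpp+p+1)$. Clearly $\omega(G') \le \rpp+p+1$ and $|V(G')| = |V(G)|+1$, so the key step is to show $G' \arrowsv (2_{\rpp+1},3,p)$. Given any $(\rpp+3)$-coloring of $G'$ I would split on the color of the new vertex $v$: if $v$ has a $2$-clique color then no vertex of $G$ can share it, so the restriction to $G$ is a $(2_\rpp,3,p)$-coloring and $G \arrowsv (2_\rpp,3,p)$ applies; if $v$ has the $3$-clique color then that class is independent on $G$ and I would invoke $G \arrowsv (2_{\rpp+2},p)$ (derived from $G \arrowsv (2_\rpp,3,p)$ by splitting ``$3$'' as ``$2,2$'' via (\ref{equation: G arrowsv (a_1, ..., a_s) Rightarrow G arrowsv (a_1, ..., a_(i - 1), t, a_i - t, a_(i + 1), ..., a_s)})); if $v$ has the $p$-clique color then that class is $K_{p-1}$-free on $G$ and I would invoke $G \arrowsv (2_{\rpp+1},3,p-1)$ (derived similarly by splitting ``$p$'' as ``$2,p-1$''). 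Iterating gives $F_v(2_r,3,p;r+p+1) \le F_v(2_\rpp,3,p;\rpp+p+1) + (r-\rpp)$ for $r \ge \rpp$, while the reverse inequality is immediate from the defining minimality of $\rpp$, completing (a). Statement (b) is then just (a) specialised to $\rpp = 0$.

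For (c), fix $G \in \mH_{extr}(2_\rpp,3,p;\rpp+p+1)$ with $n = |V(G)|$ and take any nonempty independent set $A \subseteq V(G)$. Proposition~\ref{proposition: G - A arrowsv (a_1, ..., a_(i - 1), a_i - 1, a_(i + 1_, ..., a_s)} gives $G-A \arrowsv (2_{\rpp-1},3,p)$. If one had $\omega(G-A) < \rpp+p$, then $G-A \in \mH(2_{\rpp-1},3,p;\rpp+p)$ and hence $|V(G-A)| \ge F_v(2_{\rpp-1},3,p;\rpp+p)$. But the defining minimality of $\rpp$ (strict at $r = \rpp-1$ because $\rpp > 0$) forces $F_v(2_{\rpp-1},3,p;\rpp+p) \ge n$, contradicting $|V(G-A)| \le n-1$. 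Hence $G - A \supseteq K_{\rpp+p}$ for every nonempty independent $A$; taking $|A| = 1$ in particular gives $\omega(G) \ge \rpp+p$, handling the $A = \emptyset$ case as well, so $G \arrowsv (2,\rpp+p)$.

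Finally, for (d) I would combine (c) with size counting. By (c) together with $\omega(G) \le \rpp+p$ we get $\omega(G) = \rpp+p$ and $G \in \mH(2,\rpp+p;\rpp+p+1)$, so Theorem~\ref{theorem: F_v(a_1, ..., a_s; m) = m + p} yields $n \ge F_v(2,\rpp+p;\rpp+p+1) = 2\rpp+2p+1$. The strict inequality coming from $r = 0$ not attaining the minimum gives $F_v(3,p;p+1) \ge n-\rpp+1$. Straightforward elimination yields only $\rpp \le F_v(3,p;p+1)-2p-2$, and the main obstacle is squeezing out one more unit to obtain the claimed strict bound $\rpp < F_v(3,p;p+1)-2p-2$. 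I would do this by invoking Theorem~\ref{theorem: F_v(a_1, ..., a_s; m) = m + p}(b): its unique extremal graph $\overline{C}_{2(\rpp+p)+1}$ for $\mH(2,\rpp+p;\rpp+p+1)$ has chromatic number $\rpp+p+1 < \rpp+p+2 = m$ and so, by (\ref{equation: G arrowsv (a_1, ..., a_s) Rightarrow chi(G) geq m}), cannot itself satisfy $\arrowsv (2_\rpp,3,p)$; therefore $G$ must be strictly larger, i.e.\ $n \ge 2\rpp+2p+2$, which then yields (d).
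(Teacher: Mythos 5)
Your proposal matches the paper's own proof essentially step for step: the cone construction $K_{r-\rpp}+G$ over an extremal graph for the upper bound in (a) (the paper simply cites $G \arrowsv (a_1,\dots,a_s) \Rightarrow K_t+G \arrowsv (2_t,a_1,\dots,a_s)$ where you rederive it by a valid case analysis), specialization to $\rpp=0$ for (b), deletion of a nonempty independent set combined with the strict minimality of $\rpp$ at $r=\rpp-1$ for (c), and for (d) the same squeeze via $\chi(G)\geq \rpp+p+2$ from \eqref{equation: G arrowsv (a_1, ..., a_s) Rightarrow chi(G) geq m} together with the uniqueness of $\overline{C}_{2\rpp+2p+1}$ in Theorem~\ref{theorem: F_v(a_1, ..., a_s; m) = m + p}(b) to force $|\V(G)|\geq 2\rpp+2p+2$. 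The only (trivial) caveat is that your argument for (d) presupposes $\rpp>0$, since it goes through (c); the case $\rpp=0$ follows immediately from $F_v(3,p;p+1)\geq 2p+4$ by \eqref{equation: m + p + 2 leq F_v(a_1, ..., a_s; m - 1) leq m + 3p}, which is exactly how the paper disposes of it.
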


\begin{proof}
(a) According to the definition of $\rpp = \rpp(p)$ we have

$F_v(2_r, 3, p; r + p + 1) \geq F_v(2_{\rpp}, 3, p; \rpp + p + 1) + r - \rpp, \ r \geq 0$.\\
Now we will prove that if $r \geq \rpp$ the opposite inequality is also true. Let us note that if $G \arrowsv (a_1, ..., a_s)$, then $K_1 + G \arrowsv (2, a_1, ..., a_s)$. It follows
\begin{equation}
\label{equation: G arrowsv (a_1, ..., a_s) Rightarrow K_t + G arrowsv (2_t, a_1, ..., a_s)}
G \arrowsv (a_1, ..., a_s) \Rightarrow K_t + G \arrowsv (2_t, a_1, ..., a_s).
\end{equation}
Let $G \in \mH_{extr}(2_{\rpp}, 3, p; \rpp + p + 1)$. Then from (\ref{equation: G arrowsv (a_1, ..., a_s) Rightarrow K_t + G arrowsv (2_t, a_1, ..., a_s)}) it follows that $K_{r - \rpp} + G \in \mH(2_r, 3, p; r + p + 1), \ r \geq \rpp$. Therefore

$F_v(2_r, 3, p; r + p + 1) \leq \abs{\V(K_{r - \rpp} + G)} = F_v(2_{\rpp}, 3, p; \rpp + p + 1) + r - \rpp, \ r \geq \rpp$.\\
Thus, (a) is proved.\\

(b) If $\rpp(p) = 0$, then obviously the equality (b) follows from (a).\\

(c) Assume the opposite is true and let $G$ be an extremal graph in $\mH(2_{\rpp}, 3, p; \rpp + p + 1)$, such that $\V(G) = V_1 \cup V_2$ where $V_1$ is an independent set and $V_2$ does not contain $(\rpp + p)$-clique. We can assume that $V_1 \neq \emptyset$. Let $G_1 = \G[V_2] = G - V_1$. Then $\omega(G_1) < \rpp + p$ and since $\rpp \geq 1$, from Proposition \ref{proposition: G - A arrowsv (a_1, ..., a_(i - 1), a_i - 1, a_(i + 1_, ..., a_s)} it follows that $G_1 \arrowsv (2_{\rpp - 1}, 3, p)$. Therefore $G_1 \in \mH(2_{\rpp - 1}, 3, p; \rpp + p)$ and

$\abs{\V(G)} - 1 \geq \abs{\V(G_1)} \geq F_v(2_{\rpp - 1}, 3, p; \rpp + p)$.\\
Since $\abs{\V(G)} = F_v(2_{\rpp}, 3, p; \rpp + p + 1)$ we obtain

$F_v(2_{\rpp - 1}, 3, p; \rpp + p) - (\rpp - 1) \leq F_v(2_{\rpp}, 3, p; \rpp + p + 1) - \rpp$,\\
which contradicts the definition of $\rpp$.\\

(d) According to (\ref{equation: m + p + 2 leq F_v(a_1, ..., a_s; m - 1) leq m + 3p}) $F_v(3, p; p + 1) \geq 2p + 4$ and therefore in the case $\rpp = 0$ the inequality holds. Let $\rpp > 0$ and $G$ be an extremal graph in $\mH(2_{\rpp}, 3, p; \rpp + p + 1)$. According to (\ref{equation: G arrowsv (a_1, ..., a_s) Rightarrow chi(G) geq m})
\begin{equation}
\label{equation: chi(G) geq rpp + p + 2}
\chi(G) \geq \rpp + p + 2.
\end{equation}
According to (c) and Theorem \ref{theorem: F_v(a_1, ..., a_s; m) = m + p}

$\abs{\V(G)} \geq 2\rpp + 2p + 1$.\\
Since $\chi(\overline{C}_{2\rpp + 2p + 1}) = \rpp + p + 1$, from (\ref{equation: chi(G) geq rpp + p + 2}) it follows $G \neq \overline{C}_{2\rpp + 2p + 1}$. By Theorem \ref{theorem: F_v(a_1, ..., a_s; m) = m + p}(b)

$\abs{\V(G)} = F_v(2_{\rpp}, 3, p; \rpp + p + 1) \geq 2\rpp + 2p + 2$.\\
Since $\rpp > 0$, we have

$F_v(2_{\rpp}, 3, p; \rpp + p + 1) - \rpp < F_v(3, p; p + 1).$\\
From these inequalities we can easily see that

$\rpp < F_v(3, p; p + 1) - 2p - 2$.
\end{proof}

Since $F_v(3, 3; 4) = 14$, from (\ref{equation: F_v(a_1, ..., a_s, m - 1) = ...}) we obtain $\rpp(3) = 1$. Also from (\ref{equation: F_v(a_1, ..., a_s, m - 1) = ...}) we see that $\rpp(4) = 0$ and $\rpp(5) = 0$. We suppose the following conjecture is true
\begin{conjecture}
\label{conjecture: rpp(p) = 0, p geq 4}
If $p \geq 4$, then
	\begin{equation*}
		\min \set{F_v(2_r, 3, p; r + p - 1) - r} = F_v(3, p; p + 1),
	\end{equation*}
and therefore
\begin{equation*}
F_v(2_r, 3, p; r + p + 1) = F_v(3, p; p + 1) + r.
\end{equation*}
\end{conjecture}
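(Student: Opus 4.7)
The plan is to mirror the approach of Theorem \ref{theorem: F_v(2, 2, p; p + 1) leq 2p + 5, then ...}, which handled the analogous question for $\rp$. By Theorem \ref{theorem: rpp}(a,b) it suffices to establish
\begin{equation*}
F_v(2_r, 3, p; r + p + 1) \geq F_v(3, p; p+1) + r \quad \mbox{for all } r \geq 1,
\end{equation*}
since the opposite inequality is immediate: for $G \in \mH_{extr}(3, p; p+1)$, implication (\ref{equation: G arrowsv (a_1, ..., a_s) Rightarrow K_t + G arrowsv (2_t, a_1, ..., a_s)}) gives $K_r + G \in \mH(2_r, 3, p; r+p+1)$ on $r + F_v(3, p; p+1)$ vertices.

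My first step is to prove a conditional analogue of Theorem \ref{theorem: F_v(2, 2, p; p + 1) leq 2p + 5, then ...}: \emph{if $F_v(3, p; p+1) \leq 2p + 5$, then $\rpp(p) = 0$.} Setting $m = r + p + 2$ for the target graphs, the general lower bound (\ref{equation: m + p + 2 leq F_v(a_1, ..., a_s; m - 1) leq m + 3p}) already yields $F_v(2_r, 3, p; r+p+1) \geq r + 2p + 4$, which disposes of the subcase $F_v(3, p; p+1) = 2p + 4$. In the subcase $F_v(3, p; p+1) = 2p + 5$, Theorem \ref{theorem: F_v(a_1, ..., a_s; m - 1) geq m + p + 3} upgrades this to $r + 2p + 5$ provided $F_v(2, 2, p; p+1) \geq 2p + 5$; since $F_v(2, 2, p; p+1) \leq F_v(3, p; p+1) = 2p + 5$ together with (\ref{equation: m + p + 2 leq F_v(a_1, ..., a_s; m - 1) leq m + 3p}) leaves only the two values $2p+4$ and $2p+5$, the remaining sub-subcase $F_v(2, 2, p; p+1) = 2p + 4$ would have to be handled directly (it is widely believed not to occur, see Remark \ref{remark: F_v(2, 2, p; p + 1) = 2p + 4?}).

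For $p \in \{4, 5, 6\}$ the conjecture is in fact already a consequence of the explicit formulas (\ref{equation: F_v(a_1, ..., a_s, m - 1) = ...}) and Theorem \ref{theorem: F_v(a_1, ..., a_s; m - 1) = ..., max set(a_1, ..., a_s) = 6}(b): in the last case one gets $F_v(2_r, 3, 6; r + 7) = r + 18 = r + F_v(3, 6; 7)$ for every $r \geq 0$. The genuine obstacle lies in $p \geq 7$, where $F_v(3, p; p+1)$ is not known and, as the $p = 6$ data suggest, may well exceed $2p + 5$ so that the conditional statement above does not apply. Closing this gap requires a strengthening of Theorem \ref{theorem: F_v(a_1, ..., a_s; m - 1) geq m + p + 3} that converts any additive excess of $F_v(3, p; p+1)$ over $2p + 5$ into the same additive excess for $F_v(2_r, 3, p; r + p + 1)$. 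The most natural route looks to be an inductive argument in the spirit of Theorem \ref{theorem: rpp}(c), combined with Proposition \ref{proposition: G - A arrowsv (a_1, ..., a_(i - 1), a_i - 1, a_(i + 1_, ..., a_s)} applied to a maximum independent set in a hypothetical extremal graph witnessing $\rpp > 0$: removing that set should produce a graph in $\mH(2_{\rpp - 1}, 3, p; \rpp + p)$ that is too small to exist, yielding the desired contradiction. This induction is the hardest part of the proof and, in the absence of a sharper lower bound for $F_v(2, 2, p; p+1)$ and for $F_v(3, p; p+1)$, it is unclear whether purely extremal reasoning can replace the computer-assisted input that settled the cases $p \leq 6$.
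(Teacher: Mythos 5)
There is a fundamental mismatch here: the statement you were asked to prove is stated in the paper as a \emph{conjecture}, and the paper contains no proof of it. The authors only verify the instances $p=4$ and $p=5$ (read off from the known formulas (\ref{equation: F_v(a_1, ..., a_s, m - 1) = ...})) and $p=6$ (Theorem \ref{theorem: rpp(6) = 0}, which is a computer-assisted verification that $\mH(2,2,2,3,6;10;20)=\emptyset$, combined with Theorem \ref{theorem: rpp}(d) to bound $\rpp(6)<4$). Your write-up is honest about this: it is a proof strategy plus a catalogue of the cases that follow from known results, not a proof. So the correct verdict is that the attempt has a genuine (and admitted) gap, and that this is unavoidable given that the statement is open.

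On the substance of what you did write: the reduction via Theorem \ref{theorem: rpp}(a),(b) to the single inequality $F_v(2_r,3,p;r+p+1)-r\geq F_v(3,p;p+1)$ for $r\geq 1$ is correct, as is the upper bound via $K_r+G$ and (\ref{equation: G arrowsv (a_1, ..., a_s) Rightarrow K_t + G arrowsv (2_t, a_1, ..., a_s)}). Your conditional analogue of Theorem \ref{theorem: F_v(2, 2, p; p + 1) leq 2p + 5, then ...} (with hypothesis $F_v(3,p;p+1)\leq 2p+5$) is sound except for the sub-subcase $F_v(2,2,p;p+1)=2p+4$ with $F_v(3,p;p+1)=2p+5$, which you flag but do not close; note this is exactly the uncertainty recorded in Remark \ref{remark: F_v(2, 2, p; p + 1) = 2p + 4?}. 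Your verification of $p\in\{4,5,6\}$ matches what the paper asserts. However, your conditional theorem is of limited use precisely where the conjecture is open: already for $p=6$ one has $F_v(3,6;7)=18=2p+6>2p+5$, so the hypothesis fails, and the paper instead needs Theorem \ref{theorem: rpp}(d) together with explicit computer searches ruling out $\mH(2,3,6;8;18)$, $\mH(2,2,3,6;9;19)$ and $\mH(2,2,2,3,6;10;20)$. Your proposed inductive route via Theorem \ref{theorem: rpp}(c) and Proposition \ref{proposition: G - A arrowsv (a_1, ..., a_(i - 1), a_i - 1, a_(i + 1_, ..., a_s)} is essentially how part (d) of Theorem \ref{theorem: rpp} is proved, but it only yields the finite bound $\rpp<F_v(3,p;p+1)-2p-2$, not $\rpp=0$; eliminating the remaining finitely many values of $\rpp$ is exactly the part that currently requires case-by-case computation and is not known for any $p\geq 7$.
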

It is not difficult to see that Conjecture \ref{conjecture: rpp(p) = 0, p geq 4} is true if and only if the sequence $\set{F_v(2_r, 3, p; r + p + 1)}$ for fixed $p$ is strictly increasing with respect to $r$. We will prove that when $p = 6$ Conjecture \ref{conjecture: rpp(p) = 0, p geq 4} is also true. The Theorem \ref{theorem: F_v(a_1, ..., a_s; m - 1) = ..., max set(a_1, ..., a_s) = 6} (b) follows easily from this fact.

\begin{theorem}
\label{theorem: rpp(6) = 0}
$\rpp(6) = 0$.
\end{theorem}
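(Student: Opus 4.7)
The plan is to reduce the theorem to a single computer-assisted computation, and then read off $\rpp(6)$ from the rigidity provided by Theorem \ref{theorem: rpp}(a). From Theorem \ref{theorem: F_v(2, 2, 6; 7) = 17 and F_v(3, 6; 7) = 18} we have $F_v(3, 6; 7) = 18$, so Theorem \ref{theorem: rpp}(d) gives $\rpp(6) < 18 - 14 = 4$, hence $\rpp(6) \in \set{0, 1, 2, 3}$. Thus $r = 3$ lies in the stable range of Theorem \ref{theorem: rpp}(a), and
\[
F_v(2, 2, 2, 3, 6; 10) - 3 = F_v(2_{\rpp(6)}, 3, 6; \rpp(6) + 7) - \rpp(6) = \min_{r \geq 0}\set{F_v(2_r, 3, 6; r+7) - r}.
\]
Since $F_v(3, 6; 7) - 0 = 18$ already appears in this sequence, the minimum is at most $18$; and because $\rpp(6)$ is by definition the smallest index attaining the minimum, $\rpp(6) = 0$ holds exactly when this minimum equals $18$. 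The theorem therefore reduces to the single equality $F_v(2, 2, 2, 3, 6; 10) = 21$.

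The upper bound $F_v(2, 2, 2, 3, 6; 10) \leq 21$ is immediate from (\ref{equation: G arrowsv (a_1, ..., a_s) Rightarrow K_t + G arrowsv (2_t, a_1, ..., a_s)}): for any extremal graph $G_0 \in \mH(3, 6; 7)$ (so $\abs{\V(G_0)} = 18$), the join $K_3 + G_0$ lies in $\mH(2, 2, 2, 3, 6; 10)$ and has order $21$. The preliminary lower bound $F_v(2, 2, 2, 3, 6; 10) \geq 20$ follows from Theorem \ref{theorem: F_v(a_1, ..., a_s; m - 1) geq m + p + 3}, whose hypothesis $F_v(2, 2, 6; 7) \geq 17$ is supplied by Theorem \ref{theorem: F_v(2, 2, 6; 7) = 17 and F_v(3, 6; 7) = 18}. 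The decisive step is therefore to rule out the existence of graphs in $\mH(2, 2, 2, 3, 6; 10; 20)$.

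For this final step I would invoke the algorithms of Section 3. Any $G \in \mH(2, 2, 2, 3, 6; 10; 20)$ must satisfy $\omega(G) \leq 9$, $\chi(G) \geq 11$ by (\ref{equation: G arrowsv (a_1, ..., a_s) Rightarrow chi(G) geq m}), and $\alpha(G) \leq 20 - 11 - 6 + 1 = 4$ by (\ref{equation: G in mH(a_1, ..., a_s; m - 1; n) Rightarrow alpha(G) leq n - m - p + 1}). Starting from the trivial class $\mH_{max}(3; 10; 6) = \set{K_6}$ and iteratively applying Algorithm \ref{algorithm: mH_(max)(a_1, ..., a_s; q; n)} along a chain of tuples terminating at $(2, 2, 2, 3, 6)$, while controlling the independence number at each stage so as to be compatible with the final constraint $\alpha \leq 4$, one enumerates all candidate maximal $K_{10}$-free graphs and then discards those that fail to arrow $(2, 2, 2, 3, 6)$. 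The main obstacle is the sheer size of the intermediate families: this is precisely what motivates the algorithms of Section 3, and successful execution depends on aggressively exploiting all the structural constraints ($\omega \leq 9$, $\chi \geq 11$, $\alpha \leq 4$, and $\Delta(G) \leq \abs{\V(G)} - 4$ from the final proposition of Section 3). I would expect the total running time to be comparable to that reported in the paper for Theorem \ref{theorem: abs(mathcal(H)(2, 2, 6; 7; 18)) = 76515} and Theorem \ref{theorem: F_v(2, 2, 7; 8) = 20}.
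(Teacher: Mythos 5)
Your argument is correct and is essentially the paper's proof: both use Theorem \ref{theorem: rpp}(d) to confine $\rpp(6)$ to $\set{0,1,2,3}$ and then reduce everything to the single computer verification that $\mH(2,2,2,3,6;10;20)=\emptyset$, carried out by the independence-number-stratified application of Algorithms \ref{algorithm: mH_(max)(a_1, ..., a_s; q; n), alpha(G) = 2} and \ref{algorithm: mH_(max)(a_1, ..., a_s; q; n)}. The only (cosmetic) difference is in the bookkeeping: you invoke Theorem \ref{theorem: rpp}(a) at $r=3$ together with the explicit upper bound $K_3+G_0$ to turn the question into the equality $F_v(2,2,2,3,6;10)=21$, whereas the paper lists the three inequalities $F_v(2,3,6;8)>18$, $F_v(2,2,3,6;9)>19$, $F_v(2,2,2,3,6;10)>20$ and collapses them to the last one via $F_v(2_{r-1},3,6;r+6)+1\geq F_v(2_r,3,6;r+7)$; note only that your proposed starting class $\mH_{max}(3;10;6)$ does not give the right vertex counts for the $\alpha=2$ chain ending at $20$ vertices (the paper starts that branch from $\mH_{max}(6;10;10)=\set{K_{10}-e}$).
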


\begin{proof}
From Theorem \ref{theorem: rpp} (d) we obtain $\rpp(6) < 4$. Therefore we have to prove $\rpp(6) \neq 1$, $\rpp(6) \neq 2$ and $\rpp(6) \neq 3$. Since $F_v(3, 6; 7) = 18$, we have to prove the inequalities $F_v(2, 3, 6; 8) > 18$, $F_v(2, 2, 3, 6; 9) > 19$ and $F_v(2, 2, 2, 3, 6; 10) > 20$. We will prove these inequalities with the help of a computer. From (\ref{equation: G arrowsv (a_1, ..., a_s) Rightarrow K_t + G arrowsv (2_t, a_1, ..., a_s)}) (t = 1) it is easy to see that $F_v(2_{r - 1}, 3; p) + 1 \geq F_v(2_r, 3; p + 1)$ and therefore it is enough to prove $F_v(2, 2, 2, 3, 6; 10) > 20$. We shall present the proof of this inequality only, but we also checked the other two inequalities in the same way with a computer in order to obtain more information, which is presented in Appendix A.

Similarly to the proof of Theorem \ref{theorem: abs(mathcal(H)(2, 2, 6; 7; 17)) = 3}, we shall use Algorithm \ref{algorithm: mH_(max)(a_1, ..., a_s; q; n), alpha(G) = 2} and Algorithm \ref{algorithm: mH_(max)(a_1, ..., a_s; q; n)} to prove that $\mH(2, 2, 2, 3, 6; 10; 20) = \emptyset$. According to (\ref{equation: G in mH(a_1, ..., a_s; m - 1; n) Rightarrow alpha(G) leq n - m - p + 1}), there are no graphs in $\mH(2, 2, 2, 3, 6; 10; 20)$ with independence number greater than 4.

By Theorem \ref{theorem: F_v(a_1, ..., a_s; m) = m + p}, $K_3 + \overline{C}_{13}$ is the only graph in $\mH(2, 2, 3, 6; 10; 16)$. Starting from $\mH_{max}(2, 2, 3, 6; 10; 16) = \set{K_3 + \overline{C}_{13}}$, by applying Algorithm \ref{algorithm: mH_(max)(a_1, ..., a_s; q; n)} ($r = 4; t = 4$) we show that there are no graphs in $\mH_{max}(2, 2, 2, 3, 6; 10; 20)$ with independence number 4.

The next step is to prove that there are no graphs in $\mH_{max}(2, 2, 2, 3, 6; 10; 20)$ with independence number 3. The only graph in $\mH_{max}(6; 10; 9)$ is $K_9$. Starting from $\mH_{max}(6; 10; 9) = \set{K_9}$ by applying Algorithm \ref{algorithm: mH_(max)(a_1, ..., a_s; q; n)} ($r = 2; t = 3$) we successively obtain all graphs with independence number not greater than 3 in $\mH_{max}(2, 6; 10; 11)$, $\mH_{max}(3, 6; 10; 13)$, $\mH_{max}(2, 3, 6; 10; 15)$, $\mH_{max}(2, 2, 3, 6; 10; 17)$. In the end, we apply Algorithm \ref{algorithm: mH_(max)(a_1, ..., a_s; q; n)} ($r = 3; t = 3$) to the obtained graphs in $\mH_{max}(2, 2, 3, 6; 10; 17)$ with independence number not greater than 3 to show that there are no graphs in $\mH_{max}(2, 2, 2, 3, 6; 10; 20)$ with independence number 3.

Finally, we prove that there are no graphs in $\mH_{max}(2, 2, 2, 3, 6; 10; 20)$ with independence number 2. The only graph in $\mH_{max}(6; 10; 10)$ is $K_{10} - e$. Starting from $\mH_{max}(6; 10; 10) = \set{K_{10} - e}$ by applying Algorithm \ref{algorithm: mH_(max)(a_1, ..., a_s; q; n), alpha(G) = 2} we successively obtain all graphs with independence number 2 in $\mH_{max}(2, 6; 10; 12)$, $\mH_{max}(3, 6; 10; 14)$, $\mH_{max}(2, 3, 6; 10; 16)$, $\mH_{max}(2, 2, 3, 6; 10; 18)$ and $\mH_{max}(2, 2, 2, 3, 6; 10; 20)$. As a result, no graphs in $\mH_{max}(2, 2, 2, 3, 6; 10; 20)$ with independence number 2 were obtained.

Thus, we proved $\mH_{max}(2, 2, 2, 3, 6; 10; 20) = \emptyset$ and therefore $F_v(2, 2, 2, 3, 6; 10) > 20$ and $\rpp(6) = 0$.

The numbers of graphs obtained in each step are shown in Table \ref{table: finding all graphs in mathcal(H)(2, 2, 2, 3, 6; 10; 20)} (see also Table \ref{table: finding all graphs in mathcal(H)(2, 3, 6; 8; 18)} and Table \ref{table: finding all graphs in mathcal(H)(2, 2, 3, 6; 9; 19)}).
\end{proof}

\subsection*{Proof of Theorem \ref{theorem: F_v(a_1, ..., a_s; m - 1) = ..., max set(a_1, ..., a_s) = 6} (b)}

According to Theorem \ref{theorem: rpp(6) = 0} and Theorem \ref{theorem: rpp} (b) we have $F_v(2_{m - 8}, 3, 6; m - 1) = m + 10$. From (\ref{equation: F_v(2_(m - p - 2), 3, p; m - 1) leq F_v(a_1, ..., a_s; m - 1)}) it now follows $F_v(a_1, ..., a_s; m - 1) \geq m + 10$. The opposite inequality is true according to (\ref{equation: m + 9 leq F_v(a_1, ..., a_s) leq m + 10}) (see also the Main Theorem from \cite{BN15b}).
\qed

\section{Finding all graphs in $\mH(2, 2, 6; 7; 18)$ and proofs of Theorem \ref{theorem: F_v(a_1, ..., a_s; 7) geq F_v(2_(m - 6), 6; 7) geq 3m - 5} and Theorem \ref{theorem: F_v(a_1, ..., a_s; 7) leq F_v(6, 6; 7) leq 60}}

\begin{theorem}
	\label{theorem: abs(mathcal(H)(2, 2, 6; 7; 18)) = 76515}
	$\abs{\mH(2, 2, 6; 7; 18)} = 76515$.
\end{theorem}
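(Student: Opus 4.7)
The plan is to carry out an exhaustive computer search analogous to the proof of Theorem \ref{theorem: abs(mathcal(H)(2, 2, 6; 7; 17)) = 3}, now for target size $n = 18$. By (\ref{equation: G in mH(a_1, ..., a_s; m - 1; n) Rightarrow alpha(G) leq n - m - p + 1}) with $m = 8$ and $p = 6$, every $G \in \mH(2, 2, 6; 7; 18)$ satisfies $2 \leq \alpha(G) \leq 5$, so I would partition the search into four strata indexed by $\alpha(G) \in \set{2, 3, 4, 5}$, assemble the corresponding pieces of $\mH_{max}(2, 2, 6; 7; 18)$ separately, and finally recover all of $\mH(2, 2, 6; 7; 18)$ by edge removal from the maximal graphs.

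For $\alpha(G) = 5$, I would start from the unique graph $\overline{C}_{13} \in \mH_{max}(2, 6; 7; 13)$ provided by Theorem \ref{theorem: F_v(a_1, ..., a_s; m) = m + p} and apply Algorithm \ref{algorithm: mH_(max)(a_1, ..., a_s; q; n)} with $r = 5$, $t = 5$. For $\alpha(G) \in \set{3, 4}$, I would mimic the corresponding portions of the proof of Theorem \ref{theorem: abs(mathcal(H)(2, 2, 6; 7; 17)) = 3}: iterate Algorithm \ref{algorithm: mH_(max)(a_1, ..., a_s; q; n)} with $r = 2$ and $t$ equal to the target independence number along a chain of $(+K_6)$-families starting from $K_6 \in \mH_{max}(3; 7; 6)$ (for $\alpha = 4$, reaching $\mH_{max}(2, 6; 7; 14)$) or from $K_7 - e \in \mH_{max}(3; 7; 7)$ (for $\alpha = 3$, reaching $\mH_{max}(2, 6; 7; 15)$), and then close with a single application with $r = 4$, $t = 4$, respectively $r = 3$, $t = 3$, to land in $\mH_{max}(2, 2, 6; 7; 18)$.

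The $\alpha(G) = 2$ case is the main computational obstacle. Here I would apply the specialised Algorithm \ref{algorithm: mH_(max)(a_1, ..., a_s; q; n), alpha(G) = 2} repeatedly along a chain of $(+K_6)$-families of even orders $8, 10, 12, 14, 16, 18$, starting from $K_7$-free maximal graphs on $8$ vertices with $\alpha \leq 2$, advancing by two vertices at each step, and retaining only graphs with $\alpha \leq 2$ throughout. The intermediate families are expected to grow rapidly, in line with the authors' remark that this computation consumes on the order of a month of CPU time; the bound $\Delta(G) \leq \abs{\V(G)} - 4$ from the last proposition of Section 3 serves as an additional pruning criterion.

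After merging the four $\alpha$-strata of $\mH_{max}(2, 2, 6; 7; 18)$ with \emph{nauty}-based isomorph rejection, I would enumerate the full set $\mH(2, 2, 6; 7; 18)$ by systematic edge deletion from each maximal graph, verifying the arrowing property $G \arrowsv (2, 2, 6)$ for every non-isomorphic subgraph that arises. Counting the survivors should yield $\abs{\mH(2, 2, 6; 7; 18)} = 76515$.
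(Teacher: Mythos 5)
Your proposal is correct and follows essentially the same route as the paper: the same stratification by $\alpha(G) \in \set{2, 3, 4, 5}$ via (\ref{equation: G in mH(a_1, ..., a_s; m - 1; n) Rightarrow alpha(G) leq n - m - p + 1}), the same starting families ($\overline{C}_{13}$, $K_6$, $K_7 - e$, and the unique $\alpha = 2$ member of $\mH_{max}(3; 7; 8)$, namely $C_4 + K_4$), the same chains of applications of Algorithm \ref{algorithm: mH_(max)(a_1, ..., a_s; q; n)} and Algorithm \ref{algorithm: mH_(max)(a_1, ..., a_s; q; n), alpha(G) = 2}, and the same final edge-deletion and isomorph-rejection step to pass from the 392 maximal graphs to the full count of 76515.
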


\begin{proof}
Similarly to the proof of Theorem \ref{theorem: abs(mathcal(H)(2, 2, 6; 7; 17)) = 3}, we will find all graphs in $\mH(2, 2, 6; 7; 18)$ with the help of a computer. Some of the graphs that we obtain in the steps of this proof were already obtained in the proof of Theorem \ref{theorem: abs(mathcal(H)(2, 2, 6; 7; 17)) = 3} (compare Table \ref{table: finding all graphs in H(2, 2, 6; 7; 17)} to Table \ref{table: finding all graphs in H(2, 2, 6; 7; 18)}).

Let $G \in \mH(2, 2, 6; 7; 18)$. Clearly $\alpha(G) \geq 2$, and according to (\ref{equation: G in mH(a_1, ..., a_s; m - 1; n) Rightarrow alpha(G) leq n - m - p + 1}), $\alpha(G) \leq 5$.
	
First, we prove that there are no graphs in $\mH_{max}(2, 2, 6; 7; 18)$ with independence number 5. According to Theorem \ref{theorem: F_v(a_1, ..., a_s; m) = m + p}, $\overline{C}_{13}$ is the only graph in $\mH(2, 6; 7; 13)$. Starting from $\mH_{max}(2, 6; 7; 13) = \set{\overline{C}_{13}}$, by applying Algorithm \ref{algorithm: mH_(max)(a_1, ..., a_s; q; n)} ($r = 5; t = 5$) we show that there are no graphs in $\mH(2, 2, 6; 7; 18)$ with independence number 5.
	
Now, we shall prove that there are no graphs in $\mH_{max}(2, 2, 6; 7; 17)$ with independence number 4. Starting from $\mH_{max}(3; 7; 6) = \set{K_6}$, by applying Algorithm \ref{algorithm: mH_(max)(a_1, ..., a_s; q; n)} ($r = 2; t = 4$) we successively obtain all graphs with independence number not greater than 4 in $\mH_{max}(4; 7; 8)$, $\mH_{max}(5; 7; 10)$, $\mH_{max}(6; 7; 12)$, $\mH_{max}(2, 6; 7; 14)$. By applying Algorithm \ref{algorithm: mH_(max)(a_1, ..., a_s; q; n)} ($r = 4; t = 4$) to the obtained graphs in $\mH_{max}(2, 6; 7; 14)$ with independence number not greater than 4 we conclude that there are no graphs in $\mH_{max}(2, 2, 6; 7; 18)$ with independence number 4.
	
Next, we find all graphs in $\mH_{max}(2, 2, 6; 7; 18)$ with independence number 3. Starting from $\mH_{max}(3; 7; 7) = \set{K_7 - e}$, by applying Algorithm \ref{algorithm: mH_(max)(a_1, ..., a_s; q; n)} ($r = 2; t = 3$) we successively obtain all graphs with independence number not greater than 3 in $\mH_{max}(4; 7; 9)$, $\mH_{max}(5; 7; 11)$, $\mH_{max}(6; 7; 13)$, $\mH_{max}(2, 6; 7; 15)$. By applying Algorithm \ref{algorithm: mH_(max)(a_1, ..., a_s; q; n)} ($r = 3; t = 3$) to the obtained graphs in $\mH_{max}(2, 6; 7; 15)$ with independence number not greater than 3 we obtain all 308 graphs in $\mH_{max}(2, 2, 6; 7; 18)$ with independence number 3.

The last, and computationally most difficult step, is to find all graphs in $\mH_{max}(2, 2, 6; 7; 18)$ with independence number 2. It is easy to see that $\mH_{max}(3; 7; 8) = \set{\overline{K}_3 + K_5, C_4 + K_4}$ and therefore $C_4 + K_4$ is the only graph in $\mH_{max}(3; 7; 8)$ with independence number 2. Starting from $\set{C_4 + K_4}$, by applying Algorithm \ref{algorithm: mH_(max)(a_1, ..., a_s; q; n), alpha(G) = 2} we successively obtain all graphs with independence number 2 in $\mH_{max}(4; 7; 10)$, $\mH_{max}(5; 7; 12)$, $\mH_{max}(6; 7; 14)$, $\mH_{max}(2, 6; 7; 16)$ and $\mH_{max}(2, 2, 6; 7; 18)$. As a result, we find all 84 graphs in $\mH_{max}(2, 2, 6; 7; 18)$ with independence number 2.

Thus, we obtained all 392 graphs in $\mH_{max}(2, 2, 6; 7; 18)$. By removing edges from these graphs we find all 76 515 graphs in $\mH(2, 2, 6; 7; 18)$. Some properties of these graphs are listed in Table \ref{table: H(2, 2, 6; 7; 18) properties}. The number of maximal $K_7$-free graphs and $(+K_6)$-graphs obtained in each step of the proof is shown in Table \ref{table: finding all graphs in H(2, 2, 6; 7; 18)}.
\end{proof}

\begin{table}[h]
	\centering
	\resizebox{\textwidth}{!}{
		\begin{tabular}{ | p{2.0cm} | p{2.0cm} | p{2.0cm} | p{2.0cm} | p{2.0cm} | p{2.0cm} | p{2.0cm} | }
			\hline
			$|\E(G)|$	\hfill $\#$		& $\delta(G)$ \hfill $\#$	& $\Delta(G)$ \hfill $\#$	& $\alpha(G)$ \hfill $\#$	& $\chi(G)$ \hfill $\#$		& $|Aut(G)|$ \hfill $\#$\\
			\hline
			106	\hfill 1				& 0		\hfill 3			& 13	\hfill 65			& 2	\hfill 290				& 8	\hfill 84				& 1		\hfill 72 335	\\
			107	\hfill 4				& 1		\hfill 20			& 14	\hfill 76 450		& 3	\hfill 76 225			& 9	\hfill 76 431			& 2		\hfill 3 699	\\
			108	\hfill 19				& 2		\hfill 124			& 		\hfill 				& 	\hfill 					& 	\hfill 					& 4		\hfill 430		\\
			109	\hfill 88				& 3		\hfill 571			& 		\hfill 				& 	\hfill 					& 	\hfill 					& 8		\hfill 33		\\
			110	\hfill 369				& 4		\hfill 1 943		& 		\hfill 				& 	\hfill 					& 	\hfill 					& 10	\hfill 2		\\
			111	\hfill 1 240			& 5		\hfill 4 986		& 		\hfill 				& 	\hfill 					& 	\hfill 					& 16	\hfill 2		\\
			112	\hfill 3 303			& 6		\hfill 9 826		& 		\hfill 				& 	\hfill 					& 	\hfill 					& 20	\hfill 6		\\
			113	\hfill 6 999			& 7		\hfill 14 896		& 		\hfill 				& 	\hfill 					& 	\hfill 					& 24	\hfill 1		\\
			114	\hfill 11 780			& 8		\hfill 17 057		& 		\hfill 				& 	\hfill 					& 	\hfill 					& 36	\hfill 1		\\
			115	\hfill 15 603			& 9		\hfill 14 288		& 		\hfill 				& 	\hfill 					& 	\hfill 					& 40	\hfill 6		\\
			116	\hfill 15 956			& 10	\hfill 8 397		& 		\hfill 				& 	\hfill 					& 	\hfill 					& 		\hfill			\\
			117	\hfill 12 266			& 11	\hfill 3 504		& 		\hfill 				& 	\hfill 					& 	\hfill 					& 		\hfill			\\
			118	\hfill 6 575			& 12	\hfill 876			& 		\hfill 				& 	\hfill 					& 	\hfill 					& 		\hfill			\\
			119	\hfill 2 044			& 13	\hfill 24			& 		\hfill 				& 	\hfill 					& 	\hfill 					& 		\hfill			\\
			120	\hfill 261				& 		\hfill 				& 		\hfill 				& 	\hfill 					& 	\hfill 					& 		\hfill			\\
			121	\hfill 7				& 		\hfill 				& 		\hfill 				& 	\hfill 					& 	\hfill 					& 		\hfill			\\
			\hline
		\end{tabular}
	}
	\caption{Some properties of the graphs in $\mH(2, 2, 6; 7; 18)$}
	\label{table: H(2, 2, 6; 7; 18) properties}
\end{table}

We check with a computer that among the 76 515 graphs in $\mH(2, 2, 6; 7; 18)$, only the graph $G_4$ (see Figure \ref{figure: H(3, 6; 7; 17)}) belongs to $\mH(3, 6; 7; 18)$. This is the graph that gives the upper bound $F_v(3, 6; 7) \leq 18$ in \cite{SXP09}. Thus, we proved the following

\begin{theorem}
\label{theorem: abs(mathcal(H)(3, 6; 7; 18)) = 1}
$\abs{\mH(3, 6; 7; 18)} = 1$ and $\mH_{extr}(3, 6; 7) = \mH(3, 6; 7; 18) = \set{G_4}$.
\end{theorem}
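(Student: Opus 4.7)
The plan is to leverage the full enumeration already obtained in Theorem~\ref{theorem: abs(mathcal(H)(2, 2, 6; 7; 18)) = 76515} together with the fact that the $\arrowsv(3,6)$ relation is strictly stronger than $\arrowsv(2,2,6)$.

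First, I would recall that by (\ref{equation: G arrowsv (a_1, ..., a_s) Rightarrow G arrowsv (a_1, ..., a_(i - 1), t, a_i - t, a_(i + 1), ..., a_s)}) applied with $a_1=3$ split into $2,2$, every graph $G$ with $G \arrowsv (3,6)$ also satisfies $G \arrowsv (2,2,6)$. Since both problem classes require $\omega(G) < 7$, this yields the inclusion
\begin{equation*}
\mH(3,6;7;18) \subseteq \mH(2,2,6;7;18).
\end{equation*}
By Theorem~\ref{theorem: F_v(2, 2, 6; 7) = 17 and F_v(3, 6; 7) = 18} we already know $F_v(3,6;7)=18$, so $\mH_{extr}(3,6;7) = \mH(3,6;7;18)$. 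Therefore it suffices to isolate, inside the explicitly enumerated list of $76\,515$ graphs in $\mH(2,2,6;7;18)$, those graphs that moreover satisfy $G \arrowsv (3,6)$.

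The second step is the computational filtering: for each of the $76\,515$ candidate graphs $G$, I would run the arrowing test for $(3,6)$, i.e.\ check that no partition $\V(G)=V_1\cup V_2$ exists in which $G[V_1]$ is triangle-free and $G[V_2]$ is $K_6$-free. This can be done by enumerating the maximal triangle-free subsets of $\V(G)$ (or, dually, the independent-in-complement structure) and verifying that every such $V_1$ leaves a complement $V_2$ containing a $K_6$. Since the list is already narrow (many of the $76\,515$ graphs have large independence number or small minimum degree and can be discarded quickly via the bound (\ref{equation: G in mH(a_1, ..., a_s; m - 1; n) Rightarrow alpha(G) leq n - m - p + 1}) applied with the stronger parameters for $(3,6)$), the filtering is entirely routine given that the generation itself has already been done.

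The outcome of this filtering is a single surviving graph, which coincides with the graph $G_4$ exhibiting the upper bound $F_v(3,6;7)\leq 18$ from \cite{SXP09}. Combined with $F_v(3,6;7)=18$, this yields $\mH_{extr}(3,6;7) = \mH(3,6;7;18) = \{G_4\}$ and $|\mH(3,6;7;18)|=1$.

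The main obstacle is not mathematical but a matter of trust in the enumeration: one must be confident that the output of Theorem~\ref{theorem: abs(mathcal(H)(2, 2, 6; 7; 18)) = 76515} is complete, since the conclusion here asserts uniqueness. To guard against missing extremal graphs, I would (as the authors do elsewhere) cross-check the $(3,6)$-arrowing test with an independent implementation, and also verify that the single surviving graph $G_4$ indeed satisfies $\omega(G_4)<7$, $|\V(G_4)|=18$, and $G_4 \arrowsv (3,6)$ directly.
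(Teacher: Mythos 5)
Your proposal matches the paper's proof: the authors likewise use the inclusion $\mH(3,6;7;18)\subseteq\mH(2,2,6;7;18)$ following from (\ref{equation: G arrowsv (a_1, ..., a_s) Rightarrow G arrowsv (a_1, ..., a_(i - 1), t, a_i - t, a_(i + 1), ..., a_s)}), computationally test each of the $76\,515$ enumerated graphs for $\arrowsv(3,6)$, and find that only $G_4$ survives, which together with $F_v(3,6;7)=18$ gives the claim. The only slight inaccuracy is your remark about pruning via (\ref{equation: G in mH(a_1, ..., a_s; m - 1; n) Rightarrow alpha(G) leq n - m - p + 1}) with ``stronger parameters'' for $(3,6)$ --- the bound is the same ($m=8$, $p=6$ in both cases) --- but this is an inessential aside.
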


Let us note, that $\chi(G_4) = 9$ and for this graph the inequality (\ref{equation: G arrowsv (a_1, ..., a_s) Rightarrow chi(G) geq m}) is strict. However, from Theorem \ref{theorem: abs(mathcal(H)(3, 6; 7; 18)) = 1} it follows that in this special case Conjecture \ref{conjecture: chi(G) leq m + 1} is true.

\begin{figure}[!hb]
	\centering
	\begin{subfigure}{0.5\textwidth}
		\centering
		\includegraphics[height=236px,width=118px]{./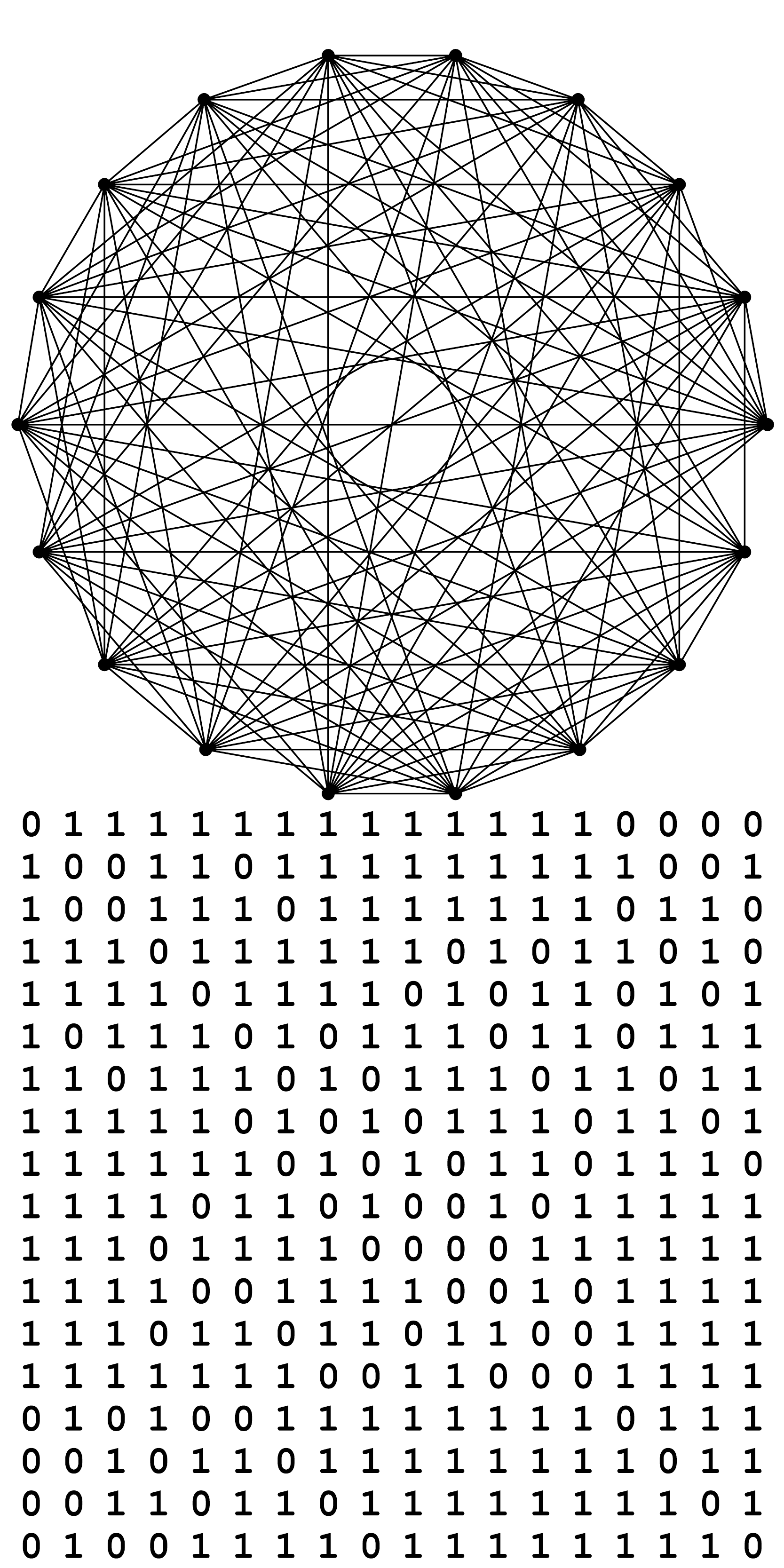}
		\caption*{\emph{$G_{4}$}}
		\label{figure: G_4}
	\end{subfigure}%
	\caption{The only graph $G_4 \in \mH(3, 6; 7; 18)$ from \cite{SXP09}}
	\label{figure: H(3, 6; 7; 17)}
\end{figure}

There are two 13-regular graphs in $\mH(2, 2, 6; 7; 18)$, one of them being $G_4$. The graph $G_4$ is the only vertex transitive graph in $\mH(2, 2, 6; 7; 18)$ and it has 36 automorphisms. The other 13-regular graph has 24 automorphisms.

Let us also note that there are 2 467 vertex critical graphs in $\mH(2, 2, 6; 7; 18)$. We obtained all 74048  non-critical graphs in another way by adding one vertex to the graphs in $\mH(2, 2, 6; 7; 17)$. This also testifies to the correctness of our implementation.

\subsection*{Proof of Theorem \ref{theorem: F_v(a_1, ..., a_s; 7) geq F_v(2_(m - 6), 6; 7) geq 3m - 5}}

According to Theorem \ref{theorem: F_v(2_(m - p), p; q) leq F_v(a_1, ..., a_s; q) leq wFv(m)(p)(q)} from this paper, $F_v(a_1, ..., a_s; 7) \geq F_v(2_{m - 6}, 6; 7)$. We shall prove by induction that $F_v(2_{m - 6}, 6; 7) \geq 3m - 5, m \geq 9$. 

The base case is $m = 9$, i.e. we have to prove that $F_v(2, 2, 2, 6; 7) \geq 22$. We will show that $\mH(2, 2, 2, 6; 7; 21) = \emptyset$ From $F_v(2, 2, 6; 7) = 17$ and Proposition \ref{proposition: G - A arrowsv (a_1, ..., a_(i - 1), a_i - 1, a_(i + 1_, ..., a_s)} it follows that there are no graphs in $\mH(2, 2, 2, 6; 7; 21)$ with independence number greater than 4. All graphs in $\mH(2, 2, 6; 7; 17)$ have independence number 2 (see Table \ref{table: H(2, 2, 6; 7; 17) properties}) and all graphs in $\mH(2, 2, 6; 7; 18)$ have independence number 2 or 3 (see Table \ref{table: H(2, 2, 6; 7; 18) properties}). No graphs are obtained by applying Algorithm \ref{algorithm: mH_(max)(a_1, ..., a_s; q; n)} ($r = 4, t = 4$) to the graphs in $\mH_{max}(2, 2, 6; 7; 17)$, or by applying Algorithm \ref{algorithm: mH_(max)(a_1, ..., a_s; q; n)} ($r = 3, t = 3$) to the graphs in $\mH_{max}(2, 2, 6; 7; 18)$. From Theorem \ref{theorem: algorithm mH_(max)(a_1, ..., a_s; q; n)} it follows that there are no graphs in $\mH(2, 2, 2, 6; 7; 21)$ with independence number 3 or 4. It remains to be proved that there are no graphs in $\mH(2, 2, 2, 6; 7; 21)$ with independence number 2. All 21-vertex graphs $G$ for which $\alpha(G) < 3$ and $\omega(G) < 7$ are known and are available on \cite{McK_r}. There are 1 118 436 such graphs $G$, and with the help of the computer we check that none of these graphs belong to $\mH(2, 2, 2, 6; 7)$. Thus, we proved $\mH(2, 2, 2, 6; 7; 21) = \emptyset$ and $F_v(2, 2, 2, 6; 7) \geq 22$.

Now suppose that for all $m'$ such that $9 \leq m' < m$ we have $F_v(2_{m' - 6}, 6; 7) \geq 3m' - 5$. Let $G \in \mH(2_{m - 6}, 6; 7)$ and $\abs{\V(G)} = F_v(2_{m - 6}, 6; 7)$. From the base case it follows that $F_v(2_{m - 6}, 6; 7) > 22$, and since the Ramsey number $R(3, 7) = 23$ we have $\alpha(G) \geq 3$. Let $A$ be an independent set of vertices of $G$ and $\abs{A} = 3$. According to Proposition \ref{proposition: G - A arrowsv (a_1, ..., a_(i - 1), a_i - 1, a_(i + 1_, ..., a_s)}, $G - A \in \mH(2_{m - 7}, 6; 7)$ and therefore
$$F_v(2_{m - 6}, 6; 7) = \abs{\V(G)} \geq F_v(2_{m - 7}, 6; 7) + \abs{A} \geq 3(m - 1) - 5 + 3 = 3m - 5.$$
\qed

\subsection*{Proof of Theorem \ref{theorem: F_v(a_1, ..., a_s; 7) leq F_v(6, 6; 7) leq 60}}

We will prove only (b), since (a) can be proved in the same way. The lower bound is true according to Theorem \ref{theorem: F_v(a_1, ..., a_s; 7) geq F_v(2_(m - 6), 6; 7) geq 3m - 5}. Clearly, we can assume that $a_s = \max\set{a_1, ..., a_s} = 6$. Therefore, from (\ref{equation: G arrowsv (a_1, ..., a_s) Rightarrow G arrowsv (a_1, ..., a_(i - 1), t, a_i - t, a_(i + 1), ..., a_s)}) we obtain the inclusion $\mH(6, 6; 7) \subseteq \mH(a_1, ..., a_s; 7)$ and it follows that $F_v(a_1, ..., a_s; 7) \leq F_v(6, 6; 7)$. Kolev proves in \cite{Kol08} that
$$F_v(a_1, ..., a_s; q + 1) . F_v(b_1, ..., b_s; t + 1) \geq F_v(a_1.b_1, ..., a_s.b_s; qt + 1).$$
Since $F_v(2, 2; 3) = 5$ and $F_v(3, 3; 4) = 14$, \cite{Nen81} and \cite{PRU99}, it follows that
$F_v(6, 6; 7) \leq F_v(2, 2; 3) . F_v(3, 3; 4) = 70.$ In (a) instead of $F_v(3, 3; 4) = 14$ we use $F_v(2, 3; 4) = 7$ (see Theorem \ref{theorem: F_v(a_1, ..., a_s; m) = m + p}).
\qed

\section{Proof of Theorem \ref{theorem: F_v(2, 2, 7; 8) = 20}}

\subsection*{Proof of the lower bound $F_v(2, 2, 7; 8) \geq 20$}

We can prove that $\mH(2, 2, 7; 8; 19) = \emptyset$ using the method from the proof of Theorem \ref{theorem: abs(mathcal(H)(2, 2, 6; 7; 17)) = 3}. Suppose that $G \in \mH(2, 2, 7; 8; 19)$. Clearly $\alpha(G) \geq 2$, and according to (\ref{equation: G in mH(a_1, ..., a_s; m - 1; n) Rightarrow alpha(G) leq n - m - p + 1}), $\alpha(G) \leq 4$.  According to Theorem \ref{theorem: F_v(a_1, ..., a_s; m) = m + p},  $\overline{C}_{15}$ is the only graph in $\mH(2, 7; 8 ; 15)$. By applying Algorithm \ref{algorithm: mH_(max)(a_1, ..., a_s; q; n)}($r = 4; t = 4$) to $\mH_{max}(2, 7; 8 ; 15) = \set{\overline{C}_{15}}$ we prove that there are no graphs in $\mH_{max}(2, 2, 7; 8; 19)$ with independence number 4.

With the help of Algorithm \ref{algorithm: mH_(max)(a_1, ..., a_s; q; n)}($r = 2; t = 3$) we successively obtain all graphs with independence number not greater than 3 in $\mH_{max}(4; 8; 8)$, $\mH_{max}(5; 8; 10)$, $\mH_{max}(6; 8; 12)$, $\mH_{max}(7; 8; 14)$, $\mH_{max}(2, 7; 8; 16)$. Then, we apply Algorithm \ref{algorithm: mH_(max)(a_1, ..., a_s; q; n)}($r = 3; t = 3$) to the obtained graphs in $\mH_{max}(2, 7; 8; 16)$ with independence number not greater than 3 to show that there are no graphs in $\mH_{max}(2, 2, 7; 8; 19)$ with independence number 3.

In the last and computationally most difficult part of the proof, with the help of Algorithm \ref{algorithm: mH_(max)(a_1, ..., a_s; q; n), alpha(G) = 2} we successively obtain all graphs with independence number 2 in $\mH_{max}(4; 8; 9)$, $\mH_{max}(5; 8; 11)$, $\mH_{max}(6; 8; 13)$, $\mH_{max}(7; 8; 15)$, $\mH_{max}(2, 7; 8; 17)$ and $\mH_{max}(2, 2, 7; 8; 19)$. As a result, no graphs in $\mH_{max}(2, 2, 7; 8; 19)$ with independence number 2 are obtained.

Thus, we proved $\mH_{max}(2, 2, 7; 8; 19) = \emptyset$. The number of graphs obtained in each step are shown in Table \ref{table: finding all graphs in H(2, 2, 7; 8; 19)}.
\qed 

\subsection*{Proof of the upper bound $F_v(2, 2, 7; 8) \leq 20$}

We need to construct a 20-vertex graph in $\mH(2, 2, 7, 8; 20)$.
All vertex transitive graphs with up to 31 vertices are known and can be found in \cite{Roy_t}. With the help of a computer we check which of these graphs belong to $\mH(2, 2, 7; 8)$. In this way, we find one 24-vertex graph, one 28-vertex graph and 6 30-vertex graphs in $\mH(2, 2, 7; 8)$.

By removing one vertex from the 24-vertex transitive graph in $\mH(2, 2, 7; 8)$ we obtain 3 23-vertex graphs in $\mH(2, 2, 7; 8)$, and by removing two vertices we obtain 8 22-vertex graphs in $\mH(2, 2, 7; 8)$. We add two edges to one of the 8 22-vertex graphs (the only one with 180 edges) we find one graph in $\mH_{max}(2, 2, 7; 8; 22)$. Using the following Procedure \ref{procedure: populate} we find 1696 more graphs in $\mH_{max}(2, 2, 7; 8; 22)$.

By removing one vertex to the obtained graphs in $\mH_{max}(2, 2, 7; 8; 22)$ we find 22 21-vertex graphs in $\mH(2, 2, 7; 8)$. We add edges to these graphs to obtain 22 graphs in $\mH_{max}(2, 2, 7; 8; 21)$. Then, we apply Procedure \ref{procedure: populate} twice to obtain 15259 more graphs in $\mH_{max}(2, 2, 7; 8; 21)$.

By removing one vertex to the obtained graphs in $\mH_{max}(2, 2, 7; 8; 21)$ we find 9 20-vertex graphs in $\mH(2, 2, 7; 8)$. Again, by successively applying Procedure \ref{procedure: populate} we obtain 39 graphs in $\mH_{max}(2, 2, 7; 8; 20)$. One of these graphs is the graph $G_5$ shown on Figure \ref{figure: H(2, 2, 7; 8; 20)}. Later, we shall use the graph $G_5$ in the proof of Theorem \ref{theorem: F_v(a_1, ..., a_s; m - 1) leq m + 12, max set(a_1, ..., a_s) = 7}.
\qed

\clearpage
\begin{procedure}
	\label{procedure: populate}
	\cite{BN15a}
	Extending a set of maximal graphs in $\mH(a_1, ..., a_s; q; n)$.
	
	1. Let $\mathcal{A}$ be a set of maximal graphs in $\mH(a_1, ..., a_s; q; n)$.
	
	2. By removing edges from the graphs in $\mathcal{A}$, find all their subgraphs which are in $\mH(a_1, ..., a_s; q; n)$. This way a set of non-maximal graphs in $\mH(a_1, ..., a_s; q; n)$ is obtained.
	
	3. Add edges to the non-maximal graphs to find all their supergraphs which are maximal in $\mH(a_1, ..., a_s; q; n)$. Extend the set $\mathcal{A}$ by adding the new maximal graphs.
\end{procedure}

\begin{figure}[h]
	\centering
	\begin{subfigure}{\textwidth}
		\centering
		\includegraphics[height=320px,width=160px]{./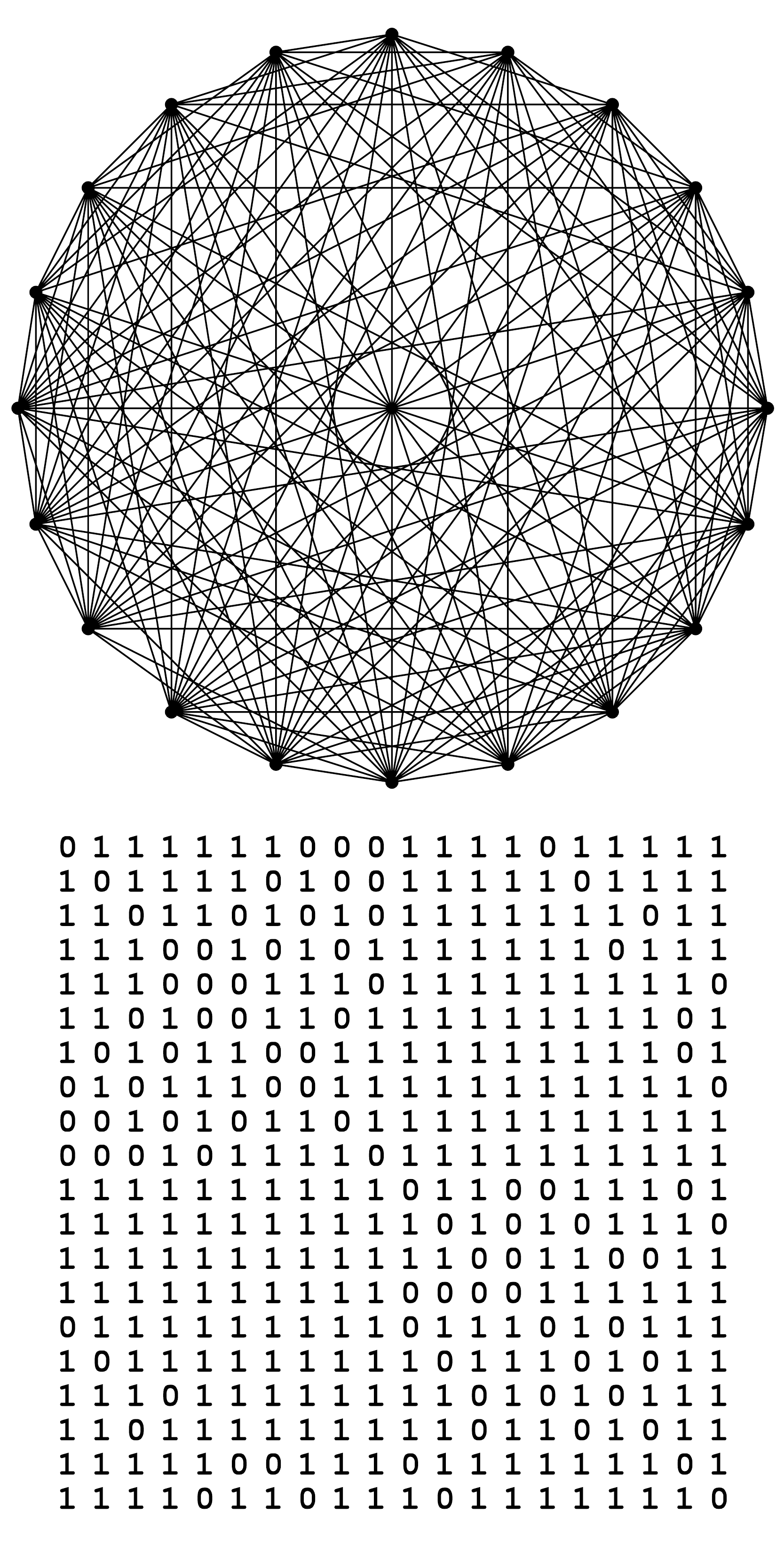}
		\caption*{\emph{$G_{5}$}}
		\label{figure: G_5}
	\end{subfigure}%
	\caption{20-vertex graph $G_5 \in \mH(2, 2, 7; 8)$}
	\label{figure: H(2, 2, 7; 8; 20)}
\end{figure}

\section{Proof of Theorem \ref{theorem: F_v(a_1, ..., a_s; m - 1) leq m + 12, max set(a_1, ..., a_s) = 7}}

In \cite{BN15a}, we define a modification of the vertex Folkman numbers $F_v(a_1, ..., a_s; q)$ with the help of which we obtain the upper bound in (\ref{equation: m + 9 leq F_v(a_1, ..., a_s) leq m + 10}).

\begin{definition}
	\label{definition: G overset(v)(rightarrow) uni(m)(p)}
	\cite{BN15a}
	Let $G$ be a graph and let $m$ and $p$ be positive integers. The expression
	\begin{equation*}
	G \arrowsv \uni{m}{p}
	\end{equation*}
	means that for every choice of positive integers $a_1, ..., a_s$ ($s$ is not fixed), such that $m = \sum\limits_{i=1}^s (a_i - 1) + 1$ and $\max\set{a_1, ..., a_s} \leq p$, we have
	\begin{equation*}
	G \arrowsv (a_1, ..., a_s).
	\end{equation*}
\end{definition}

In \cite{BN15a} we also define the following notations (see also \cite{BN15b}):

$\wH{m}{p}{q} = \set{G : G \arrowsv \uni{m}{p} \mbox{ and } \omega(G) < q}$.

$\wFv{m}{p}{q} = \min\set{\abs{\V(G)} : G \in \wH{m}{p}{q}}$.

To prove the upper bound in Theorem \ref{theorem: F_v(a_1, ..., a_s; m - 1) leq m + 12, max set(a_1, ..., a_s) = 7} we shall use the following results from \cite{BN15a}.

\begin{theorem}
	\label{theorem: F_v(2_(m - p), p; q) leq F_v(a_1, ..., a_s; q) leq wFv(m)(p)(q)}
	\cite{BN15a}
	Let $a_1, ..., a_s$ be positive integers and let $m$ and $p$ be defined by (\ref{equation: m and p}), $q > p$. Then
	\begin{equation*}
	F_v(2_{m - p}, p; q) \leq F_v(a_1, ..., a_s; q) \leq \wFv{m}{p}{q}.
	\end{equation*}
\end{theorem}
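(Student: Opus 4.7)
The statement to be established is the sandwich
\[
F_v(2_{m-p}, p; q) \;\le\; F_v(a_1, \ldots, a_s; q) \;\le\; \widetilde{F}_v(\uni{m}{p}; q),
\]
so the plan has two independent halves, each essentially a definition-chase plus one structural ingredient already recorded in the excerpt.

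For the upper bound, the plan is to unwrap Definition \ref{definition: G overset(v)(rightarrow) uni(m)(p)} directly. Fix any $G \in \wH{m}{p}{q}$, so $\omega(G) < q$ and $G \arrowsv \uni{m}{p}$. The tuple $(a_1,\ldots,a_s)$ of the statement satisfies $m = \sum_{i=1}^{s}(a_i-1)+1$ and $\max\{a_1,\ldots,a_s\} = p$, hence falls within the universal quantifier in the definition of $G \arrowsv \uni{m}{p}$; therefore $G \arrowsv (a_1,\ldots,a_s)$. Thus $G \in \mH(a_1,\ldots,a_s;q)$, which gives $F_v(a_1,\ldots,a_s;q) \le |\V(G)|$. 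Minimizing over $G$ yields $F_v(a_1,\ldots,a_s;q) \le \wFv{m}{p}{q}$.

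For the lower bound, the key is to show that every graph arrowing $(a_1,\ldots,a_s)$ also arrows $(2_{m-p}, p)$. I would prove this by iterated application of the splitting implication
\[
G \arrowsv (a_1, \ldots, a_s) \;\Rightarrow\; G \arrowsv (a_1, \ldots, a_{i-1}, t, a_i - t + 1, a_{i+1}, \ldots, a_s), \qquad a_i \ge t \ge 2,
\]
recorded as (\ref{equation: G arrowsv (a_1, ..., a_s) Rightarrow G arrowsv (a_1, ..., a_(i - 1), t, a_i - t, a_(i + 1), ..., a_s)}). One checks that the quantity $\sum (a_j - 1) + 1$ is invariant under this operation, since $a_i - 1$ becomes $(t-1) + (a_i - t + 1 - 1) = a_i - 1$. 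Without loss of generality take $a_s = p$, and for each index $i < s$ apply the splitting with $t = 2$ repeatedly to break $a_i$ into $a_i - 1$ twos; likewise if several entries equal $p$, leave exactly one of them intact and reduce the rest. After finitely many steps the tuple is reduced to a permutation of $(2_{m-p}, p)$, and the invariance of $m$ guarantees that the number of twos produced is precisely $m-p$. Consequently $G \arrowsv (a_1,\ldots,a_s) \Rightarrow G \arrowsv (2_{m-p}, p)$, so every $G \in \mH(a_1,\ldots,a_s;q)$ lies in $\mH(2_{m-p}, p; q)$, yielding $F_v(2_{m-p},p;q) \le F_v(a_1,\ldots,a_s;q)$.

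There is no substantial obstacle: the whole argument is bookkeeping around Definition \ref{definition: G overset(v)(rightarrow) uni(m)(p)} and the splitting implication, together with the observation that $\omega(G) < q$ is a property of $G$ alone and therefore passes through both halves untouched. The only point that deserves a careful sentence is verifying that the iterated splitting can always be steered to land precisely on the canonical tuple $(2_{m-p}, p)$ rather than on some other tuple with the same $m$ and maximum $p$; this follows because $t = 2$ is always admissible for any entry $a_i \ge 2$, and the process terminates exactly when every entry other than one distinguished copy of $p$ equals $2$.
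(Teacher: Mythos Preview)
Your argument is correct. Note, however, that the paper does not supply its own proof of this theorem: it is quoted with a citation to \cite{BN15a} and used as a black box. That said, your approach is exactly the one implicit in the surrounding text---the upper bound is immediate from Definition~\ref{definition: G overset(v)(rightarrow) uni(m)(p)}, and the lower bound is the same iterated-splitting reduction via (\ref{equation: G arrowsv (a_1, ..., a_s) Rightarrow G arrowsv (a_1, ..., a_(i - 1), t, a_i - t, a_(i + 1), ..., a_s)}) that the paper already records in the analogous derivations (\ref{equation: G arrowsv (a_1, ..., a_s) Rightarrow G arrowsv (2_(m - 1))}) and (\ref{equation: G arrowsv (a_1, ..., a_s) Rightarrow G arrowsv (2_(m - p - 2), 3, p)}).
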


\begin{theorem}
	\label{theorem: wFv(m)(p)(m - m_0 + q) leq wFv(m_0)(p)(q) + m - m_0}
	\cite{BN15a}
	Let $m$, $m_0$, $p$ and $q$ be positive integers, $m \geq m_0$ and $q > \min\set{m_0, p}$. Then
	\begin{equation*}
	\wFv{m}{p}{m - m_0 + q} \leq \wFv{m_0}{p}{q} + m - m_0.
	\end{equation*}
\end{theorem}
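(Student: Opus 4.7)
The inequality $F_v(a_1,\ldots,a_s;m-1) \geq m+10$ follows at once from Theorem \ref{theorem: F_v(a_1, ..., a_s; m - 1) geq m + p + 3} with $p = 7$: its hypothesis $F_v(2,2,p;p+1)\geq 2p+5$ becomes $F_v(2,2,7;8) \geq 19$, and Theorem \ref{theorem: F_v(2, 2, 7; 8) = 20} yields $F_v(2,2,7;8) = 20$. Hence $F_v(a_1,\ldots,a_s;m-1) \geq m + p + 3 = m + 10$.

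For the upper bound I would proceed through the modified Folkman-number machinery of Section 8. Applying Theorem \ref{theorem: wFv(m)(p)(m - m_0 + q) leq wFv(m_0)(p)(q) + m - m_0} with $m_0 = 9$, $p = 7$ and $q = 8$ (so $q > \min\{m_0,p\} = 7$ and $m - m_0 + q = m - 1$) gives
\begin{equation*}
\wFv{m}{7}{m-1} \leq \wFv{9}{7}{8} + (m - 9),
\end{equation*}
and Theorem \ref{theorem: F_v(2_(m - p), p; q) leq F_v(a_1, ..., a_s; q) leq wFv(m)(p)(q)} provides $F_v(a_1,\ldots,a_s;m-1) \leq \wFv{m}{7}{m-1}$. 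The entire upper bound therefore reduces to proving $\wFv{9}{7}{8} \leq 21$.

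To establish $\wFv{9}{7}{8} \leq 21$ I need to exhibit a $21$-vertex graph $G$ with $\omega(G) < 8$ and $G \arrowsv \uni{9}{7}$. Unfolding Definition \ref{definition: G overset(v)(rightarrow) uni(m)(p)} and iterating the splitting implication (\ref{equation: G arrowsv (a_1, ..., a_s) Rightarrow G arrowsv (a_1, ..., a_(i - 1), t, a_i - t, a_(i + 1), ..., a_s)}), the condition $G \arrowsv \uni{9}{7}$ is equivalent to $G$ arrowing only the ``coarsest'' admissible tuples, i.e. those partitions $(b_1,\ldots,b_s)$ of $8$ with $b_i \leq 6$ in which no two parts have sum at most $6$. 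A short enumeration shows these are precisely $(6,2), (5,3), (4,4)$, corresponding to the $a$-tuples $(7,3), (6,4), (5,5)$. Hence the task collapses to producing a single $K_8$-free graph on $21$ vertices that simultaneously arrows $(7,3)$, $(6,4)$ and $(5,5)$.

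The concrete plan is to seed the construction with the graph $G_5 \in \mH(2,2,7;8)$ produced in the proof of Theorem \ref{theorem: F_v(2, 2, 7; 8) = 20} (for which $|\V(G_5)| = 20$ and $\omega(G_5) < 8$) and build $G$ by adjoining a new vertex $v$ to $G_5$ with a suitably chosen neighborhood $N \subseteq \V(G_5)$. Candidate neighborhoods are swept in the style of Procedure \ref{procedure: populate}: discard $G$ whenever $\omega(G) \geq 8$, and otherwise test by computer whether $G \arrowsv (7,3)$, $G \arrowsv (6,4)$ and $G \arrowsv (5,5)$; once such a $G$ is found, $\wFv{9}{7}{8} \leq 21$ and the displayed chain gives $F_v(a_1,\ldots,a_s;m-1) \leq 21 + (m-9) = m + 12$. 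The main obstacle is precisely this verification step: passing all three arrow-tests on the same $21$-vertex graph is combinatorially delicate and dominates the computational cost, though the strong $(2,2,7)$-arrow property of $G_5$ sharply restricts the possible neighborhoods $N$ and makes the search feasible.
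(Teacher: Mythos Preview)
Your proposal does not address the stated theorem at all. The statement you were asked to prove is the abstract inequality
\[
\wFv{m}{p}{m - m_0 + q} \;\leq\; \wFv{m_0}{p}{q} + (m - m_0),
\]
for positive integers with $m \geq m_0$ and $q > \min\{m_0,p\}$. What you have written is instead a proof of Theorem~\ref{theorem: F_v(a_1, ..., a_s; m - 1) leq m + 12, max set(a_1, ..., a_s) = 7} (the bound $m+10 \le F_v(a_1,\ldots,a_s;m-1) \le m+12$ when $\max\{a_1,\ldots,a_s\}=7$). Worse, in the middle of your argument you explicitly \emph{invoke} Theorem~\ref{theorem: wFv(m)(p)(m - m_0 + q) leq wFv(m_0)(p)(q) + m - m_0} as a tool (``Applying Theorem~\ref{theorem: wFv(m)(p)(m - m_0 + q) leq wFv(m_0)(p)(q) + m - m_0} with $m_0=9$, $p=7$, $q=8$\ldots''). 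So as a proof of the stated result your proposal is circular.

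The paper does not supply its own proof of Theorem~\ref{theorem: wFv(m)(p)(m - m_0 + q) leq wFv(m_0)(p)(q) + m - m_0}; it is quoted from \cite{BN15a}. The standard argument is short and direct: take an extremal witness $G \in \wH{m_0}{p}{q}$ with $|\V(G)| = \wFv{m_0}{p}{q}$, and form $K_{m-m_0} + G$. Then $\omega(K_{m-m_0}+G) = (m-m_0) + \omega(G) < (m-m_0)+q$, and for any tuple $(a_1,\ldots,a_s)$ with $\sum(a_i-1)+1 = m$ and $\max a_i \le p$, the implication (\ref{equation: G arrowsv (a_1, ..., a_s) Rightarrow K_t + G arrowsv (2_t, a_1, ..., a_s)}) (in reverse form, via Proposition~\ref{proposition: G - A arrowsv (a_1, ..., a_(i - 1), a_i - 1, a_(i + 1_, ..., a_s)} applied to the $m-m_0$ cone vertices) reduces the arrowing to a tuple with $m$-value $m_0$ and parts still bounded by $p$, which $G$ arrows by hypothesis. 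Hence $K_{m-m_0}+G \in \wH{m}{p}{m-m_0+q}$, giving the inequality. None of this appears in your write-up.

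As a proof of Theorem~\ref{theorem: F_v(a_1, ..., a_s; m - 1) leq m + 12, max set(a_1, ..., a_s) = 7}, incidentally, your outline matches the paper's argument essentially verbatim, including the construction of the $21$-vertex witness $G_6$ by adding a vertex to $G_5$; but that is not the task here.
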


\subsection*{Proof of Theorem \ref{theorem: F_v(a_1, ..., a_s; m - 1) leq m + 12, max set(a_1, ..., a_s) = 7}}

The lower bound in Theorem \ref{theorem: F_v(a_1, ..., a_s; m - 1) leq m + 12, max set(a_1, ..., a_s) = 7} follows from Theorem \ref{theorem: F_v(2, 2, 7; 8) = 20} and Theorem \ref{theorem: F_v(a_1, ..., a_s; m - 1) geq m + p + 3} To prove the upper bound, we shall use the graph $G_6 \in \mH(3, 7; 8) \cap \mH(4, 6; 8) \cap \mH(5, 5; 8)$ (see Figure \ref{figure: wH(9)(7)(8)(21)}) obtained by adding one vertex to the graph $G_5 \in \mH(2, 2, 7; 8; 20)$ (see Figure \ref{figure: H(2, 2, 7; 8; 20)}). Using (\ref{equation: G arrowsv (a_1, ..., a_s) Rightarrow G arrowsv (a_1, ..., a_(i - 1), t, a_i - t, a_(i + 1), ..., a_s)}), it is easy to prove that from $G_6 \arrowsv (3, 7)$, $G_6 \arrowsv (4, 6)$ and $G_6 \arrowsv (5, 5)$ it follows $G_6 \arrowsv \uni{9}{7}$. Therefore $G_6 \in \wH{9}{7}{8}$ and $\wFv{9}{7}{8} \leq 21$. Now from Theorem \ref{theorem: F_v(2_(m - p), p; q) leq F_v(a_1, ..., a_s; q) leq wFv(m)(p)(q)} and Theorem \ref{theorem: wFv(m)(p)(m - m_0 + q) leq wFv(m_0)(p)(q) + m - m_0} we derive
\begin{equation*}
F_v(a_1, ..., a_s; m - 1) \leq \wFv{m}{7}{m - 1} \leq \wFv{9}{7}{8} + m - 9 \leq m + 12.
\end{equation*}
\qed

Regarding the number $F_v(3, 7; 8)$, the following bounds were known:
$$18 \leq F_v(3, 7; 8) \leq 22.$$

The lower bound is true according to (\ref{equation: m + p + 2 leq F_v(a_1, ..., a_s; m - 1) leq m + 3p}) and the upper bound was proved in \cite{SXP09}. Using the results in this paper, we improve these bounds by proving the following

\begin{theorem}
\label{theorem: 20 leq F_v(3, 7; 8) leq 21}
$20 \leq F_v(3, 7; 8) \leq 21$.
\end{theorem}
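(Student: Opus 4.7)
The plan is to derive both bounds almost immediately from results already established in the preceding sections, so no new computation beyond what has been done is needed.

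For the lower bound $F_v(3,7;8)\ge 20$, I would argue as follows. By specializing (2.1) with $a_1=3$, $a_2=7$, $i=1$, $t=2$, one obtains $G\arrowsv(3,7)\Rightarrow G\arrowsv(2,2,7)$, so $\mH(3,7;8)\subseteq\mH(2,2,7;8)$. Consequently $F_v(3,7;8)\ge F_v(2,2,7;8)$, and by Theorem \ref{theorem: F_v(2, 2, 7; 8) = 20} the right-hand side equals $20$.

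For the upper bound $F_v(3,7;8)\le 21$, the key observation is that the 21-vertex graph $G_6$ used in the proof of Theorem \ref{theorem: F_v(a_1, ..., a_s; m - 1) leq m + 12, max set(a_1, ..., a_s) = 7} was explicitly shown to satisfy $G_6\in\mH(3,7;8)\cap\mH(4,6;8)\cap\mH(5,5;8)$. In particular $G_6\in\mH(3,7;8)$, so by definition of the Folkman number $F_v(3,7;8)\le |\V(G_6)|=21$.

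The two bounds combined give the statement. The only step that really requires work has already been done: the lower bound rests on the (computer-assisted) proof that $\mH(2,2,7;8;19)=\emptyset$, and the upper bound rests on the explicit construction of $G_6$ (obtained by adding one vertex to the graph $G_5$ found via Procedure \ref{procedure: populate} applied to vertex-transitive graphs from \cite{Roy_t}). Given those inputs, the present theorem is a one-line consequence of monotonicity of the arrow relation together with the inclusion $\mH(3,7;8)\subseteq\mH(2,2,7;8)$. Hence there is no genuine obstacle at this stage; the proof is purely an accounting step.
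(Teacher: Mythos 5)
Your proposal is correct and follows essentially the same route as the paper: the lower bound is exactly the paper's argument ($F_v(3,7;8)\geq F_v(2,2,7;8)=20$ via the inclusion $\mH(3,7;8)\subseteq\mH(2,2,7;8)$), and for the upper bound the paper cites Theorem \ref{theorem: F_v(a_1, ..., a_s; m - 1) leq m + 12, max set(a_1, ..., a_s) = 7} with $m=9$, whose proof rests on the very same graph $G_6\in\mH(3,7;8)$ that you invoke directly. No gap.
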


\begin{proof}
The upper bound is true according to Theorem \ref{theorem: F_v(a_1, ..., a_s; m - 1) leq m + 12, max set(a_1, ..., a_s) = 7} and the lower bound follows from $F_v(3, 7; 8) \geq F_v(2, 2, 7; 8) = 20$.
\end{proof}

\begin{figure}[h]
	\centering
	\begin{subfigure}{\textwidth}
		\centering
		\includegraphics[height=332px,width=166px]{./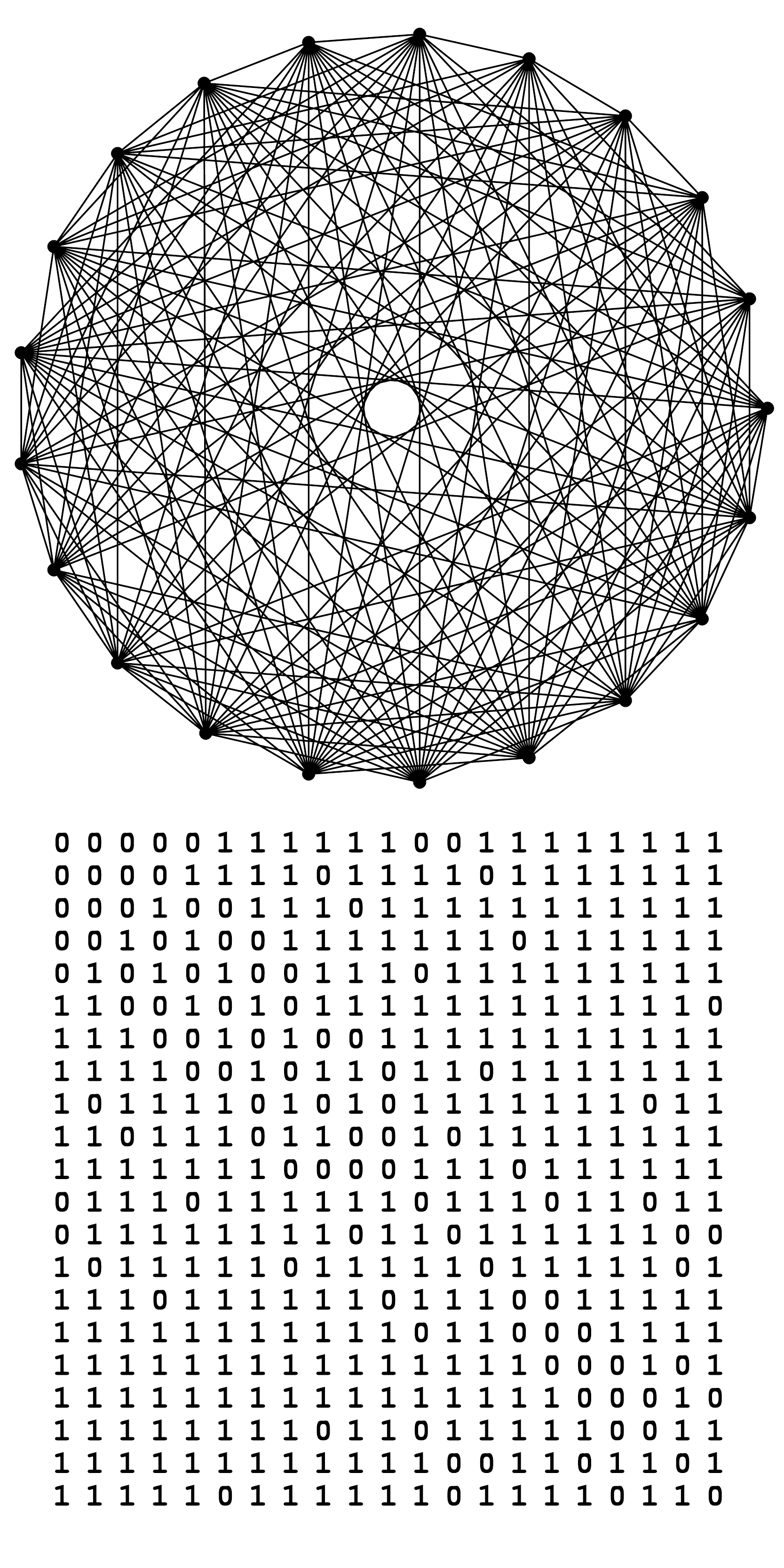}
		\caption*{\emph{$G_{6}$}}
		\label{figure: G_6}
	\end{subfigure}%
	\caption{21-vertex graph $\G_6 \in \wH{9}{7}{8}$}
	\label{figure: wH(9)(7)(8)(21)}
\end{figure}

% -----------------------------------------------------------

\bibliographystyle{plain}

\bibliography{main}

\begin{thebibliography}{10}

\bibitem{BN15a}
A.~Bikov and N.~Nenov.
\newblock The vertex {F}olkman numbers ${F}_v(a_1,...,a_s; m - 1) = m + 9$, if
  $\max\{a_1,...,a_s\} = 5$.
\newblock To appear in the {\em Journal of Combinatorial Mathematics and
  Combinatorial Computing}, preprint: arxiv:1503.08444, August 2015.

\bibitem{BN16}
A.~Bikov and N.~Nenov.
\newblock The edge {F}olkman number ${F}_e(3, 3; 4)$ is greater than 19.
\newblock Submitted, preprint: arxiv:1609.03468, September 2016.

\bibitem{BN15b}
A.~Bikov and N.~Nenov.
\newblock Modified vertex {F}olkman numbers.
\newblock {\em Mathematics and Education. Proceedings of the 45th Spring
  Conference of the Union of Bulgarian Mathematicians}, 45:113--123, 2016.
\newblock preprint: arxiv:1511.02125, November 2015.

\bibitem{Chv79}
V.~Chv{\'a}tal.
\newblock The minimality of the {M}ycielski graph.
\newblock {\em Lecture Notes in Mathematics}, 406:243--246, 1979.

\bibitem{CR06}
J.~Coles and S.~Radziszowski.
\newblock Computing the {F}olkman number ${F}_v(2,2,3;4)$.
\newblock {\em Journal of Combinatorial Mathematics and Combinatorial
  Computing}, 58:13--22, 2006.

\bibitem{DLSX13}
F.~Deng, M.~Liang, Z.~Shao, and X.~Xu.
\newblock Upper bounds for the vertex {F}olkman number ${F}_v(3, 3, 3; 4)$ and
  ${F}_v(3, 3, 3; 5)$.
\newblock {\em ARS Combinatoria}, 112:249--256, 2013.

\bibitem{DR08}
A.~Dudek and V.~R{\"o}dl.
\newblock New upper bound on vertex {F}olkman numbers.
\newblock {\em Lecture Notes in Computer Science}, 4557:473--478, 2008.

\bibitem{Fol70}
J.~{F}olkman.
\newblock Graphs with monochromatic complete subgraphs in every edge coloring.
\newblock {\em SIAM Journal on Applied Mathematics}, 18:19--24, 1970.

\bibitem{JR95}
T.~Jensen and G.~Royle.
\newblock Small graphs with chromatic number 5: a computer research.
\newblock {\em Journal of Graph Theory}, 19:107--116, 1995.

\bibitem{Kol08}
N.~Kolev.
\newblock A multiplicative inequality for vertex folkman numbers.
\newblock {\em Discrete Mathematics}, 308:4263--4266, 2008.

\bibitem{KN06c}
N.~Kolev and N.~Nenov.
\newblock New recurrent inequality on a class of vertex {F}olkman numbers.
\newblock In {\em Proceedings of the 35th Spring Conference of the Union of
  Bulgarian Mathematicians}, pages 164--168, April 2006.

\bibitem{KN06a}
N.~Kolev and N.~Nenov.
\newblock New upper bound for a class of vertex {F}olkman numbers.
\newblock {\em The Electronic Journal of Combinatorics}, 13, 2006.

\bibitem{LR11}
J.~Lathrop and S.~Radziszowski.
\newblock Computing the {F}olkman number ${F}_v(2, 2, 2, 2, 2; 4)$.
\newblock {\em Journal of Combinatorial Mathematics and Combinatorial
  Computing}, 78:213--222, 2011.

\bibitem{LRU01}
T.~Luczak, A.~Ruci{\'n}ski, and S.~Urba{\'n}ski.
\newblock On minimal vertex {F}olkman graphs.
\newblock {\em Discrete Mathematics}, 236:245--262, 2001.

\bibitem{LU96}
T.~Luczak and S.~Urba{\'n}ski.
\newblock A note on restricted vertex {R}amsey numbers.
\newblock {\em Periodica Mathematica Hungarica}, 33:101--103, 1996.

\bibitem{McK_r}
B.D. McKay.
\newblock {R}amsey graphs,
  \url{http://users.cecs.anu.edu.au/~bdm/data/ramsey.html}.

\bibitem{MP13}
B.D. McKay and A.~Piperino.
\newblock Practical graph isomorphism, {II}.
\newblock {\em J. Symbolic Computation}, 60:94--112, 2013.
\newblock Preprint version at \href{http://arxiv.org/abs/1301.1493}{arxiv.org}.

\bibitem{Myc55}
J.~Mycielski.
\newblock Sur le coloriage des graphes.
\newblock {\em Colloquium Mathematicum}, 3:161--162, 1955.

\bibitem{Nen81}
N.~Nenov.
\newblock An example of a 15-vertex (3, 3)-{R}amsey graph with clique number 4.
\newblock {\em Comptes rendus de l'Academie bulgare des Sciences},
  34(11):1487--1489, 1981.
\newblock (in Russian).

\bibitem{Nen83}
N.~Nenov.
\newblock On the {Z}ykov numbers and some its applications to {R}amsey theory.
\newblock {\em Serdica Bulgariacae Mathematicae}, 9:161--167, 1983.
\newblock (in Russian).

\bibitem{Nen84}
N.~Nenov.
\newblock The chromatic number of any 10-vertex graph without 4-cliques is at
  most 4.
\newblock {\em Comptes rendus de l'Academie bulgare des Sciences}, 37:301--304,
  1984.
\newblock (in Russian).

\bibitem{Nen85}
N.~Nenov.
\newblock Application of the corona-product of two graphs in {R}amsey theory.
\newblock {\em Ann. Univ. Sofia Fac. Math. Inform.}, 79:349--355, 1985.
\newblock (in Russian).

\bibitem{Nen98}
N.~Nenov.
\newblock On the small graphs with chromatic number 5 without 4-cliques.
\newblock {\em Discrete Mathematics}, 188:297--298, 1998.

\bibitem{Nen00}
N.~Nenov.
\newblock On a class of vertex {F}olkman graphs.
\newblock {\em Ann. Univ. Sofia Fac. Math. Inform.}, 94:15--25, 2000.

\bibitem{Nen01}
N.~Nenov.
\newblock A generalization of a result of {D}irac.
\newblock {\em Ann. Univ. Sofia Fac. Math. Inform.}, 95:59--69, 2001.

\bibitem{Nen02}
N.~Nenov.
\newblock On a class of vertex {F}olkman numbers.
\newblock {\em Serdica Mathematical Journal}, 28:219--232, 2002.

\bibitem{PRU99}
K.~Piwakowski, S.~Radziszowski, and S.~Urbanski.
\newblock Computation of the {F}olkman number ${F}_e(3, 3; 5)$.
\newblock {\em Journal of Graph Theory}, 32:41--49, 1999.

\bibitem{Roy_t}
G.~Royle.
\newblock Vertex transitive graphs,
  \url{http://staffhome.ecm.uwa.edu.au/~00013890/trans/}.

\bibitem{SLPX12}
Z.~Shao, M.~Liang, L.~Pan, and X.~Xu.
\newblock Computation of the {F}olkman number ${F}_v(3, 5; 6)$.
\newblock {\em Journal of Combinatorial Mathematics and Combinatorial
  Computing}, 81:11--17, 2012.

\bibitem{SXL09}
Z.~Shao, X.~Xu, and H.~Luo.
\newblock Bounds for two multicolor vertex {F}olkman numbers.
\newblock {\em Application Research of Computers}, 3:834--835, 2009.
\newblock (in Chinese).

\bibitem{SXP09}
Z.~Shao, X.~Xu, and L.~Pan.
\newblock New upper bounds for vertex {F}olkman numbers ${F}_v(3, k; k + 1)$.
\newblock {\em Utilitas Mathematica}, 80:91--96, 2009.

\bibitem{W01}
D.~West.
\newblock {\em Introduction to Graph Theory}.
\newblock Prentice Hall, Inc., Upper Saddle River, 2 edition, 2001.

\bibitem{XLS10}
X.~Xu, H.~Luo, and Z.~Shao.
\newblock Upper and lower bounds for ${F}_v(4, 4; 5)$.
\newblock {\em Electronic Journal of Combinatorics}, 17, 2010.

\end{thebibliography}

% -----------------------------------------------------------

\clearpage

\appendix

\section{Results of computations}

\begin{table}[!h]
	\centering
	\begin{tabular}{| p{3cm} | p{2cm} | p{2cm} | p{2cm} |}
		\hline
		set									& independence number	& maximal graphs 		& $(+K_6)$- graphs	\\
		\hline
		$\mH(2, 6; 7; 13)$			& $\leq 4$				& 1						& 1					\\
		$\mH(2, 2, 6; 7; 17)$		& $= 4$					& 0						&					\\
		\hline
		$\mH(3; 7; 6)$				& $\leq 3$				& 1						& 2					\\
		$\mH(4; 7; 8)$				& $\leq 3$				& 2						& 12				\\
		$\mH(5; 7; 10)$				& $\leq 3$				& 6						& 274				\\
		$\mH(6; 7; 12)$				& $\leq 3$				& 37					& 78 926			\\
		$\mH(2, 6; 7; 14)$			& $\leq 3$				& 20					& 5 291				\\
		$\mH(2, 2, 6; 7; 17)$		& $= 3$					& 0						&					\\
		\hline
		$\mH(3; 7; 7)$				& $\leq 2$				& 1						& 3					\\
		$\mH(4; 7; 9)$				& $\leq 2$				& 2						& 22				\\
		$\mH(5; 7; 11)$				& $\leq 2$				& 5						& 468				\\
		$\mH(6; 7; 13)$				& $\leq 2$				& 24					& 97 028			\\
		$\mH(2, 6; 7; 15)$			& $\leq 2$				& 473					& 10 018 539		\\
		$\mH(2, 2, 6; 7; 17)$		& $= 2$					& 1						&					\\
		\hline
		$\mH(2, 2, 6; 7; 17)$		& 						& 1						&					\\
		\hline
	\end{tabular}
	\caption{Steps in finding all maximal graphs in $\mH(2, 2, 6; 7; 17)$}
	\label{table: finding all graphs in H(2, 2, 6; 7; 17)}
\end{table}

\begin{table}[!b]
	\centering
	\begin{tabular}{| p{3cm} | p{2cm} | p{2cm} | p{2cm} |}
		\hline
		set									& independence number	& maximal graphs 		& $(+K_6)$- graphs	\\
		\hline
		$\mH(2, 6; 7; 13)$			& $\leq 5$				& 1						& 1					\\
		$\mH(3, 6; 7; 18)$			& $= 5$					& 0						&					\\
		\hline
		$\mH(3; 7; 6)$				& $\leq 4$				& 1						& 2					\\
		$\mH(4; 7; 8)$				& $\leq 4$				& 2						& 13				\\
		$\mH(5; 7; 10)$				& $\leq 4$				& 7						& 317				\\
		$\mH(6; 7; 12)$				& $\leq 4$				& 50					& 102 387			\\
		$\mH(2, 6; 7; 14)$			& $\leq 4$				& 20					& 5 293				\\
		$\mH(2, 2, 6; 7; 18)$		& $= 4$					& 0						&					\\
		\hline
		$\mH(3; 7; 7)$				& $\leq 3$				& 1						& 4					\\
		$\mH(4; 7; 9)$				& $\leq 3$				& 3						& 45				\\
		$\mH(5; 7; 11)$				& $\leq 3$				& 12					& 3 071				\\
		$\mH(6; 7; 13)$				& $\leq 3$				& 168					& 4 691 237			\\
		$\mH(2, 6; 7; 15)$			& $\leq 3$				& 1627					& 70 274 176		\\
		$\mH(2, 2, 6; 7; 18)$		& $= 3$					& 308					&					\\
		\hline
		$\mH(3; 7; 8)$				& $\leq 2$				& 1						& 8					\\
		$\mH(4; 7; 10)$				& $\leq 2$				& 3						& 82				\\
		$\mH(5; 7; 12)$				& $\leq 2$				& 10					& 5 057				\\
		$\mH(6; 7; 14)$				& $\leq 2$				& 96					& 2 799 416			\\
		$\mH(2, 6; 7; 16)$			& $\leq 2$				& 7509					& 920 112 878		\\
		$\mH(2, 2, 6; 7; 18)$		& $= 2$					& 84					&					\\
		\hline
		$\mH(2, 2, 6; 7; 18)$		& 						& 392					&					\\
		\hline
	\end{tabular}
	\caption{Steps in finding all maximal graphs in $\mH(2, 2, 6; 7; 18)$}
	\label{table: finding all graphs in H(2, 2, 6; 7; 18)}
\end{table}

\begin{table}[!h]
	\centering
	\begin{tabular}{| p{3cm} | p{2cm} | p{2cm} | p{2cm} |}
		\hline
		set									& independence number	& maximal graphs 		& $(+K_7)$- graphs	\\
		\hline
		$\mH(3, 6; 8; 14)$			& $\leq 4$				& 1						& 1					\\
		$\mH(2, 3, 6; 8; 18)$		& $= 4$					& 0						&					\\
		\hline
		$\mH(4; 8; 7)$				& $\leq 3$				& 1						& 2					\\
		$\mH(5; 8; 9)$				& $\leq 3$				& 2						& 12				\\
		$\mH(6; 8; 11)$				& $\leq 3$				& 6						& 276				\\
		$\mH(2, 6; 8; 13)$			& $\leq 3$				& 37					& 79 749			\\
		$\mH(3, 6; 8; 15)$			& $\leq 3$				& 21					& 3 458				\\
		$\mH(2, 3, 6; 8; 18)$		& $= 3$					& 0						&					\\
		\hline
		$\mH(4; 8; 8)$				& $\leq 2$				& 1						& 3					\\
		$\mH(5; 8; 10)$				& $\leq 2$				& 2						& 22				\\
		$\mH(6; 8; 12)$				& $\leq 2$				& 5						& 489				\\
		$\mH(2, 6; 8; 14)$			& $\leq 2$				& 25					& 119 126			\\
		$\mH(3, 6; 8; 16)$			& $\leq 2$				& 509					& 3 582 157			\\
		$\mH(2, 3, 6; 8; 18)$		& $= 2$					& 0						&					\\
		\hline
		$\mH(2, 3, 6; 8; 18)$		& 						& 0						&					\\
		\hline
	\end{tabular}
	\caption{Steps in finding all maximal graphs in $\mH(2, 3, 6; 8; 18)$}
	\label{table: finding all graphs in mathcal(H)(2, 3, 6; 8; 18)}
\end{table}

\begin{table}[!b]
	\centering
	\begin{tabular}{| p{3cm} | p{2cm} | p{2cm} | p{2cm} |}
		\hline
		set									& independence number	& maximal graphs 		& $(+K_8)$- graphs	\\
		\hline
		$\mH(2, 3, 6; 9; 15)$		& $\leq 4$				& 1						& 1					\\
		$\mH(2, 2, 3, 6; 9; 19)$	& $= 4$					& 0						&					\\
		\hline
		$\mH(5; 9; 8)$				& $\leq 3$				& 1						& 2					\\
		$\mH(6; 9; 10)$				& $\leq 3$				& 2						& 12				\\
		$\mH(2, 6; 9; 12)$			& $\leq 3$				& 6						& 277				\\
		$\mH(3, 6; 9; 14)$			& $\leq 3$				& 37					& 79 901			\\
		$\mH(2, 3, 6; 9; 16)$		& $\leq 3$				& 21					& 3 459				\\
		$\mH(2, 2, 3, 6; 9; 19)$	& $= 3$					& 0						&					\\
		\hline
		$\mH(5; 9; 9)$				& $\leq 2$				& 1						& 3					\\
		$\mH(6; 9; 11)$				& $\leq 2$				& 2						& 22				\\
		$\mH(2, 6; 9; 13)$			& $\leq 2$				& 5						& 496				\\
		$\mH(3, 6; 9; 15)$			& $\leq 2$				& 25					& 121 499			\\
		$\mH(2, 3, 6; 9; 17)$		& $\leq 2$				& 512					& 3 585 530			\\
		$\mH(2, 2, 3, 6; 9; 19)$	& $= 2$					& 0						&					\\
		\hline
		$\mH(2, 2, 3, 6; 9; 19)$	& 						& 0						&					\\
		\hline
	\end{tabular}
	\caption{Steps in finding all maximal graphs in $\mH(2, 2, 3, 6; 9; 19)$}
	\label{table: finding all graphs in mathcal(H)(2, 2, 3, 6; 9; 19)}
\end{table}

\begin{table}[!h]
	\centering
	\begin{tabular}{| p{3cm} | p{2cm} | p{2cm} | p{2cm} |}
		\hline
		set										& independence number	& maximal graphs 		& $(+K_9)$- graphs	\\
		\hline
		$\mH(2, 2, 3, 6; 10; 16)$		& $\leq 4$				& 1						& 1					\\
		$\mH(2, 2, 2, 3, 6; 10; 20)$	& $= 4$					& 0						&					\\
		\hline
		$\mH(6; 10; 9)$					& $\leq 3$				& 1						& 2					\\
		$\mH(2, 6; 10; 11)$				& $\leq 3$				& 2						& 12				\\
		$\mH(3, 6; 10; 13)$				& $\leq 3$				& 6						& 277				\\
		$\mH(2, 3, 6; 10; 15)$			& $\leq 3$				& 37					& 79 934			\\
		$\mH(2, 2, 3, 6; 10; 17)$		& $\leq 3$				& 21					& 3 459				\\
		$\mH(2, 2, 2, 3, 6; 10; 20)$	& $= 3$					& 0						&					\\
		\hline
		$\mH(6; 10; 10)$				& $\leq 2$				& 1						& 3					\\
		$\mH(2, 6; 10; 12)$				& $\leq 2$				& 2						& 22				\\
		$\mH(3, 6; 10; 14)$				& $\leq 2$				& 5						& 498				\\
		$\mH(2, 3, 6; 10; 16)$			& $\leq 2$				& 25					& 121 864			\\
		$\mH(2, 2, 3, 6; 10; 18)$		& $\leq 2$				& 512					& 3 585 546			\\
		$\mH(2, 2, 2, 3, 6; 10; 20)$	& $= 2$					& 0						&					\\
		\hline
		$\mH(2, 2, 2, 3, 6; 10; 20)$	& 						& 0						&					\\
		\hline
	\end{tabular}
	\caption{Steps in finding all maximal graphs in $\mH(2, 2, 2, 3, 6; 10; 20)$}
	\label{table: finding all graphs in mathcal(H)(2, 2, 2, 3, 6; 10; 20)}
\end{table}

\begin{table}[!b]
	\centering
	\begin{tabular}{| p{3cm} | p{2cm} | p{2cm} | p{2cm} |}
		\hline
		set									& independence number	& maximal graphs 		& $(+K_7)$- graphs	\\
		\hline
		$\mH(2, 7; 8; 15)$			& $\leq 4$				& 1						& 1					\\
		$\mH(2, 2, 7; 8; 19)$		& $= 4$					& 0						&					\\
		\hline
		$\mH(4; 8; 8)$				& $\leq 3$				& 1						& 4					\\
		$\mH(5; 8; 10)$				& $\leq 3$				& 3						& 45				\\
		$\mH(6; 8; 12)$				& $\leq 3$				& 12					& 3 104				\\
		$\mH(7; 8; 14)$				& $\leq 3$				& 169					& 4 776 518			\\
		$\mH(2, 7; 8; 16)$			& $\leq 3$				& 34					& 22 896			\\
		$\mH(2, 2, 7; 8; 19)$		& $= 3$					& 0						&					\\
		\hline
		$\mH(4; 8; 9)$				& $\leq 2$				& 1						& 8					\\
		$\mH(5; 8; 11)$				& $\leq 2$				& 3						& 84				\\
		$\mH(6; 8; 13)$				& $\leq 2$				& 10					& 5 394				\\
		$\mH(7; 8; 15)$				& $\leq 2$				& 102					& 4 984 994			\\
		$\mH(2, 7; 8; 17)$			& $\leq 2$				& 2760					& 380 361 736		\\
		$\mH(2, 2, 7; 8; 19)$		& $= 2$					& 0						&					\\
		\hline
		$\mH(2, 2, 7; 8; 19)$		& 						& 0						&					\\
		\hline
	\end{tabular}
	\caption{Steps in finding all maximal graphs in $\mH(2, 2, 7; 8; 19)$}
	\label{table: finding all graphs in H(2, 2, 7; 8; 19)}
\end{table}
% -----------------------------------------------------------

\end{document}